\newcommand{\hide}[1]{}
\DeclareMathOperator{\add}{add{}}
\DeclareMathOperator{\Ann}{Ann}
\DeclareMathOperator{\charac}{char}
\DeclareMathOperator{\cok}{cok}
\DeclareMathOperator{\End}{End}
\DeclareMathOperator{\Ext}{Ext}
\DeclareMathOperator{\GP}{GP}
\DeclareMathOperator{\Hom}{Hom}
\DeclareMathOperator{\id}{id}
\DeclareMathOperator{\im}{im}
\DeclareMathOperator{\MCM}{MCM}
\DeclareMathOperator{\MF}{MF}
\DeclareMathOperator{\op}{op}
\DeclareMathOperator{\pd}{projdim}%%% or better ``pd'' as before?
\DeclareMathOperator{\uMCM}{\underline{\MCM}}
\newcommand{\rmod}[1]{\mbox{mod-}#1}
\newcommand{\lmod}[1]{#1\mbox{-mod}}
\newcommand{\rMod}[1]{\mbox{Mod-}#1}
\newcommand{\lMod}[1]{#1\mbox{-Mod}}
\renewcommand{\phi}{\varphi}
\renewcommand{\bar}{\overline}
\renewcommand{\tilde}{\widetilde}
\renewcommand{\mod}{\operatorname{mod}}
\renewcommand{\leq}{\leqslant}
\renewcommand{\geq}{\geqslant}
\renewcommand{\to}{\longrightarrow}
\newcommand\xto{\ -\negmedspace\negmedspace\negthinspace\xrightarrow}
\newcommand{\gen}[1]{\langle #1 \rangle}
\newcommand{\EE}{{\mathbb E}}
\newcommand{\MM}{{\mathbb M}}
\newcommand{\PP}{{\mathbb P}}
\newcommand{\TT}{{\mathbb T}}
\newcommand{\cala}{{\mathcal A}}
\newcommand{\cali}{{\mathcal I}}
\newcommand{\calt}{{\mathcal T}}
\newcommand{\calx}{{\mathcal X}}
\theoremstyle{plain}
\newtheorem{theorem}{Theorem}
\newtheorem{prop}[theorem]{Proposition}
\newtheorem{proposition}[theorem]{Proposition}
\newtheorem{lemma}[theorem]{Lemma}
\newtheorem{corollary}[theorem]{Corollary}
\newtheorem{question}[theorem]{Question}
\newtheorem*{conjecture*}{Conjecture}
\newtheorem*{theorem*}{Theorem}
\newtheorem*{prop*}{Proposition}
\theoremstyle{definition}
\newtheorem{definition}[theorem]{Definition}
\newtheorem{remark}[theorem]{Remark}
\newtheorem*{notation*}{Notation}
\newtheorem{example}[theorem]{Example}
\numberwithin{theorem}{section}
\numberwithin{equation}{section}
\begin{document}

\title[Theorems of Kn\"orrer and Herzog-Popescu]{%
Some extensions of theorems of Kn\"orrer and Herzog-Popescu}

\author[]{Alex S.\ Dugas}
\email{adugas@pacific.edu}
\address{Dept.\ of Mathematics, University of the Pacific,
  Stockton, California 95211}

\author[G.J. Leuschke]{Graham J.\ Leuschke}
\email{gjleusch@syr.edu}
\address{Dept.\ of Mathematics, Syracuse University,
Syracuse NY 13244, USA}
\urladdr{\href{http://www.leuschke.org/}{http://www.leuschke.org/}}

\thanks{}

\date{\today}

%  \subjclass[2010]{Primary: 
% %   13C14, % MCM modules
% %   14A22, % non-comm algebraic geometry
% %   14E15, % global theory and res'ns of sings
% %   14C40, % determinantal ideals
% 14F05, %Sheaves, derived categories of sheaves and related constructions
% 14M15; % Grassmannians in alg geom
%  Secondary:
% %   13D02, % syzygies and res'ns
% %   12G50, % rep. theory MCM modules
% %   16G20% repr'ns of quivers
% 16S38, % rings from non-comm alg geom
% 15A75% Grassmannians in linear algebra
% }

\keywords{}

\begin{abstract}
  A construction due to Kn\"orrer shows that if $N$ is a maximal Cohen-Macaulay
  module over a hypersurface defined by $f+y^2$, then the first syzygy of $N/yN$
  decomposes as the direct sum of $N$ and its own first syzygy.  This was extended by
  Herzog-Popescu to hypersurfaces $f+y^n$, replacing $N/yN$ by $N/y^{n-1}N$.  We
  show, in the same setting as Herzog-Popescu, that the first syzygy of $N/y^{k}N$ is
  always an extension of $N$ by its first syzygy, and moreover that this extension
  has useful approximation properties. We give two applications. First, we construct
  a ring $\Lambda^\#$ over which every finitely generated module has an eventually
  $2$-periodic projective resolution, prompting us to call it a ``non-commutative
  hypersurface ring''.  Second, we give upper bounds on the dimension of the
  stable module category (a.k.a. the singularity category) of a hypersurface defined
  by a polynomial of the form $x_1^{a_1} + \dots + x_d^{a_d}$. 
\end{abstract}

\maketitle

\section{Introduction}\label{sect:intro}

Let $S$ be a regular local ring of dimension $d+1$, and let $f$ be a non-zero
non-unit of $S$. Let $R = S/(f)$ be the $d$-dimensional hypersurface ring.  Fix an
integer $n \geq 2$ and set $R^\# = S[y]/(f+y^n)$. We refer to $R^\#$ as the
\emph{($n$-fold) branched cover} of $R$.

We consider maximal Cohen-Macaulay (MCM) modules over $R$ and $R^\#$.  Our starting
point is the following theorem of Kn\"orrer and its generalization by Herzog-Popescu.
Write $\Omega$ for the syzygy operator.

\begin{theorem}[{\cite{Knorrer}}]
  \label{thm:Knorrer}
  Suppose that $S = k[\![x_0, \dots, x_d]\!]$ and the characteristic of $k$ is not
  equal to $2$. Assume $n=2$, so that $R^\# = S[y]/(f+y^2)$.  Then for any MCM
  $R^\#$-module $N$, we have
  \[
  \Omega_{R^\#}(N/yN) \cong N \oplus \Omega_{R^\#}(N)\,.
  \]
%  In particular $N \in \Sigma_1$.
\end{theorem}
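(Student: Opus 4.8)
My plan is to reformulate the assertion in terms of matrix factorizations and then to differentiate the defining matrix factorization identity with respect to $y$. After splitting off a free summand of $N$ and disposing of the case $N=R^\#$ by hand (there $N/yN\cong R$ and $\Omega_{R^\#}(R)=(y)\cong R^\#=N\oplus\Omega_{R^\#}N$), both sides being additive in $N$, I may assume $N$ has no free direct summand, so that $N=\cok\phi$ for a reduced matrix factorization $(\phi,\psi)$ of $f+y^2$ over the regular local ring $T$ with $R^\#=T/(f+y^2)$ (that is, $T=S[\![y]\!]$, or $S[y]$ localized at $(\m,y)$). Thus $\phi\psi=\psi\phi=(f+y^2)I_m$, the minimal $R^\#$-free resolution of $N$ is $2$-periodic with differentials alternately $\phi$ and $\psi$, and $\Omega_{R^\#}N\cong\cok\psi$. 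As a preliminary point, $y$ is a nonzerodivisor on $N$: since $N$ is maximal Cohen-Macaulay and $R^\#$ is equidimensional, every associated prime of $N$ is a minimal prime of $R^\#$, and none contains $y$ because $R^\#/(y)\cong R$ has dimension $d<\dim R^\#$.

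Next I would compute $\Omega_{R^\#}(N/yN)$ using the mapping cone of the chain map ``multiplication by $y$'' on the periodic resolution of $N$. Since $y$ is a nonzerodivisor on $N$, this cone is an $R^\#$-free resolution of $N/yN$, and it is minimal because $y$ and the entries of $\phi,\psi$ lie in the maximal ideal of $R^\#$. Its first two differentials are $\partial_1=(\,yI\mid\phi\,)$ and $\partial_2=\left(\begin{smallmatrix}-\phi&0\\ yI&\psi\end{smallmatrix}\right)$, so that
\[
\Omega_{R^\#}(N/yN)\ \cong\ \cok\begin{pmatrix}\phi&0\\ yI&\psi\end{pmatrix},
\]
and indeed $\left(\begin{smallmatrix}\phi&0\\ yI&\psi\end{smallmatrix}\right)\left(\begin{smallmatrix}\psi&0\\ -yI&\phi\end{smallmatrix}\right)=(f+y^2)I_{2m}$, so this is again a matrix factorization of $f+y^2$. (A point to get right here is that one must read off the \emph{first} syzygy of $N/yN$ from the cone, not the second, and that replacing $-\phi$ by $\phi$ costs nothing up to isomorphism.)

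The heart of the argument, and the place where $\operatorname{char}k\neq2$ is used, is to show that $\left(\begin{smallmatrix}\phi&0\\ yI&\psi\end{smallmatrix}\right)$ is equivalent to the diagonal matrix factorization $\left(\begin{smallmatrix}\phi&0\\ 0&\psi\end{smallmatrix}\right)$, whose cokernel is $\cok\phi\oplus\cok\psi=N\oplus\Omega_{R^\#}N$. Applying the $S$-linear derivation $\partial/\partial y$ to $\psi\phi=(f+y^2)I_m$ gives, over $T$,
\[
\frac{\partial\psi}{\partial y}\,\phi+\psi\,\frac{\partial\phi}{\partial y}\ =\ 2y\,I_m,
\]
and from this one checks directly that
\[
\begin{pmatrix}I&0\\ -\tfrac12\,\partial\psi/\partial y&I\end{pmatrix}
\begin{pmatrix}\phi&0\\ yI&\psi\end{pmatrix}
\begin{pmatrix}I&0\\ -\tfrac12\,\partial\phi/\partial y&I\end{pmatrix}
\ =\ \begin{pmatrix}\phi&0\\ 0&\psi\end{pmatrix},
\]
the new lower-left block being $yI-\tfrac12\bigl(\tfrac{\partial\psi}{\partial y}\phi+\psi\tfrac{\partial\phi}{\partial y}\bigr)=0$. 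The two outer matrices are unipotent, hence invertible over $T$, so the two matrix factorizations are equivalent; taking cokernels yields $\Omega_{R^\#}(N/yN)\cong N\oplus\Omega_{R^\#}N$.

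I expect the one genuinely non-routine step to be the conjugation in the last display: there is no a priori reason the off-diagonal term $yI$ forced by the mapping cone should be removable, and recognizing that it is cancelled by precisely one half of the $y$-derivatives of $\phi$ and $\psi$ --- which is exactly where division by $2$, hence the hypothesis on $\operatorname{char}k$, enters --- is the crux. The remaining ingredients (the nonzerodivisor property of $y$, the shape and minimality of the mapping cone, and the reduction to a reduced matrix factorization) are standard.
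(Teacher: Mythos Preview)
Your proof is correct, but the route differs substantially from the paper's. The paper does not prove Kn\"orrer's theorem directly; it recovers it as the case $n=2$ of Proposition~\ref{prop:newHP}. That argument runs as follows: Lemma~\ref{lem:ses} produces a short exact sequence $0\to\Omega_{R^\#}(N)\to\Omega_{R^\#}(N/yN)\to N\to0$ as the pullback of a free presentation of $N$ along multiplication by $y$, and Lemma~\ref{lem:speed-kills} shows this sequence splits if and only if $y$ annihilates $\Ext^1_{R^\#}(N,\Omega_{R^\#}N)$. The annihilation is then supplied by the Noether different (Remark~\ref{rmk:Ragnar}), which contains $\tfrac{\partial}{\partial y}(f+y^2)=2y$, hence $y$ once $2$ is invertible.

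So both arguments ultimately hinge on ``differentiate with respect to $y$'': you do it concretely, differentiating the matrix identity $\psi\phi=(f+y^2)I$ to manufacture an explicit unipotent conjugation that kills the off-diagonal block of the mapping-cone presentation; the paper does it abstractly, invoking that the Jacobian lies in the Noether different and the Noether different annihilates $\Ext^1$. Your approach is self-contained and close to Kn\"orrer's original, while the paper's is more conceptual and immediately yields the Herzog--Popescu generalization to arbitrary $n$ (replace $2y$ by $ny^{n-1}$) and to arbitrary regular local $S$. It is worth noting that your matrix argument also works over any regular local $S$, since all it needs is the $S$-linear derivation $\partial/\partial y$ on $S[\![y]\!]$.
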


\begin{theorem}[{\cite[Theorem 2.6]{Herzog-Popescu:1997}}]
  \label{thm:Herzog-Popescu}
  Suppose that $S = k[\![x_0, \dots, x_d]\!]$ and the characteristic of $k$ does not divide
   $n$. Let $n \geq 2$ be arbitrary.  Then for any MCM $R^\#$-module $N$,
  we have 
  \[
  \Omega_{R^\#}(N/y^{n-1}N) \cong N \oplus \Omega_{R^\#}(N)\,.
  \]
  % In particular
  % $N \in \Sigma_{n-1}$.
\end{theorem}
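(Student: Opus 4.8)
The plan is to translate everything into matrix factorizations over the regular ambient ring, realize the minimal free resolution of $N/y^{n-1}N$ as a mapping cone, read off $\Omega_{R^\#}(N/y^{n-1}N)$ as the cokernel of an explicit $2\times 2$ block matrix, and finally diagonalize that matrix by elementary operations. The hypothesis $\operatorname{char}k\nmid n$ is used only in the last step.

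First come some harmless reductions and the set-up. Both sides of the asserted isomorphism are additive in $N$, and the free module $R^\#$ satisfies it, since $\Omega_{R^\#}(R^\#/y^{n-1}R^\#)\cong y^{n-1}R^\#\cong R^\#$ while $\Omega_{R^\#}R^\#=0$. Hence, by Krull--Schmidt, we may assume $N$ has no free direct summand. Writing $S^\#:=S[y]$ (the relevant regular local ring, with $R^\#=S^\#/(f+y^n)$), we then have $N=\cok_{R^\#}(\bar\phi)$ for a \emph{minimal} matrix factorization $(\phi,\psi)$ of $f+y^n$ over $S^\#$: that is, $\phi$ and $\psi$ are square matrices with $\phi\psi=\psi\phi=(f+y^n)I$ and all entries in the maximal ideal of $S^\#$. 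The minimal $R^\#$-free resolution $P_\bullet$ of $N$ is then $2$-periodic with differentials alternating between $\bar\phi$ and $\bar\psi$, and $\Omega_{R^\#}N\cong\cok_{R^\#}(\bar\psi)$.

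Next I would build the resolution of $N/y^{n-1}N$. Since $R^\#/(y)\cong R$ has dimension one less than $R^\#$, the element $y$ belongs to a system of parameters of the Cohen--Macaulay ring $R^\#$, so $y$ — and hence $y^{n-1}$ — is a nonzerodivisor on the maximal Cohen--Macaulay module $N$. Thus there is a short exact sequence $0\to N\xrightarrow{\,y^{n-1}\,}N\to N/y^{n-1}N\to 0$, and the mapping cone of the chain map $y^{n-1}\!\cdot\mathrm{id}\colon P_\bullet\to P_\bullet$ is an $R^\#$-free resolution of $N/y^{n-1}N$. The key observation is that this resolution is automatically minimal: because $y\in\m$ and every entry of $\bar\phi$ and $\bar\psi$ lies in $\m$, every differential of the mapping cone has all entries in $\m$. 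Reading off its second differential, which is $\bigl(\begin{smallmatrix}\bar\psi & \pm y^{n-1}I\\ 0 & \bar\phi\end{smallmatrix}\bigr)$, we obtain, after absorbing the sign,
\[
\Omega_{R^\#}(N/y^{n-1}N)\;\cong\;\cok_{R^\#}\begin{pmatrix}\bar\psi & y^{n-1}I\\ 0 & \bar\phi\end{pmatrix}.
\]

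The last step is to clear the off-diagonal block. Differentiating the identity $\psi\phi=(f+y^n)I$ with respect to $y$ gives $(\partial_y\psi)\,\phi+\psi\,(\partial_y\phi)=n\,y^{n-1}I$ in the matrix ring over $S^\#$; since $\operatorname{char}k\nmid n$, the integer $n$ is a unit in $S$, so $y^{n-1}I=\psi A+B\phi$ with $A:=n^{-1}\partial_y\phi$ and $B:=n^{-1}\partial_y\psi$. Reducing modulo $f+y^n$ and then multiplying on the left by $\bigl(\begin{smallmatrix}I&-\bar B\\0&I\end{smallmatrix}\bigr)$ and on the right by $\bigl(\begin{smallmatrix}I&-\bar A\\0&I\end{smallmatrix}\bigr)$ — both invertible over $R^\#$ — turns $\bigl(\begin{smallmatrix}\bar\psi & y^{n-1}I\\0&\bar\phi\end{smallmatrix}\bigr)$ into $\bigl(\begin{smallmatrix}\bar\psi & 0\\0&\bar\phi\end{smallmatrix}\bigr)$, whence
\[
\Omega_{R^\#}(N/y^{n-1}N)\;\cong\;\cok_{R^\#}(\bar\psi)\oplus\cok_{R^\#}(\bar\phi)\;\cong\;\Omega_{R^\#}N\oplus N,
\]
which is the assertion. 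The main obstacle, as I see it, is less a technical hurdle than spotting the two ideas that make the argument work: that the mapping-cone resolution is minimal as it stands (so that its first syzygy can be read off with no further simplification), and that the off-diagonal term is exactly the content of the $y$-derivative of the matrix-factorization identity — which is precisely, and only, where $\operatorname{char}k\nmid n$ is needed.
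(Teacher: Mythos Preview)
Your proof is correct and takes a different, more hands-on route than the paper. The paper (Proposition~2.4, together with Lemmas~2.1--2.2 and Remark~2.3) argues abstractly: the pullback of a free presentation of $N$ along $y^{n-1}\cdot\mathrm{id}_N$ yields the extension $0\to\Omega_{R^\#}N\to\Omega_{R^\#}(N/y^{n-1}N)\to N\to 0$, and this splits because $ny^{n-1}=\partial_y(f+y^n)$ lies in the Noether different $\mathcal{N}_S^{R^\#}$, which annihilates $\Ext^1_{R^\#}(N,\Omega_{R^\#}N)$. You instead work entirely inside the matrix-factorization formalism: the mapping cone realizes $\Omega_{R^\#}(N/y^{n-1}N)$ as the cokernel of an explicit $2\times 2$ block matrix, and differentiating $\psi\phi=(f+y^n)I$ with respect to $y$ hands you exactly the elementary matrices that block-diagonalize it. Both arguments ultimately exploit the same derivative $ny^{n-1}$; yours is self-contained and constructive (it exhibits the splitting explicitly, and avoids the Noether-different machinery), while the paper's is more conceptual and immediately yields the general criterion ``$y^k$ kills $\Ext^1 \Leftrightarrow \Omega_{R^\#}(N/y^kN)\cong N\oplus\Omega_{R^\#}N$'' that is used throughout the rest of the paper. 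Both extend without change to an arbitrary regular local base $S$, since $\partial_y$ is a well-defined $S$-linear derivation on $S[y]$.
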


We remark that each of these results is a special case of the more general results
in the respective papers, the proofs of which do require the additional assumption
that $S$ be a power series ring over a field.  As a byproduct of our results below,
we will see that both results continue to hold true when $S$ is an arbitrary regular
local ring. 

In this note we consider the full subcategories of $R^\#$-modules of the form
$\Omega_{R^\#}(M)$, where $M$ is annihilated by $y^k$, for $k = 1, \dots,
n$. Specifically, set $A_k = S[y]/(f,y^k)$ for $1 \leq k \leq n$, and define
\[
\Sigma_k = \add\left\{\Omega_{R^\#}(M) \,\middle|\, M \text{ is a MCM
    $A_k$-module}\right\}\,.
\]

We are interested in the categories
$\Sigma_1 \subseteq \Sigma_2 \subseteq \cdots \subseteq \MCM(R^\#)$.  Observe that
the results of Kn\"orrer and Herzog-Popescu above imply $\Sigma_{1} = \MCM(R^\#)$
when $n=2$, and $\Sigma_{n-1}=\MCM(R^\#)$ for arbitrary $n$, respectively.

Our main result is the following.

{\def\thetheorem{A}
\begin{theorem}\label{thm:A}
  The subcategories $\Sigma_1, \dots, \Sigma_n$ are functorially finite in
  $\MCM(R^\#)$, that is, every MCM $R^\#$-module $N$ admits both left and right
  approximations by modules in $\Sigma_k$ for each $k$.  Namely, there are short
  exact sequences
  \[
  0 \to \Omega_{R^\#}(N) \to \Omega_{R^\#}(N/y^kN) \to N \to 0
  \]
  and
  \[
  0 \to N \to \Omega_{R^\#}(\Omega_{R^\#}(N)/y^k\Omega_{R^\#}(N)) \to
  \Omega_{R^\#}(N) \to 0
  \]
  which are right and left $\Sigma_k$-approximations of $N$, respectively.
\end{theorem}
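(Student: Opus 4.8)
The plan is to build both short exact sequences explicitly from a projective cover of $N$, recognize the middle terms as syzygies of MCM $A_k$-modules (hence objects of $\Sigma_k$) by a depth count, and then check the two universal properties by one and the same mechanism: the connecting map that obstructs the factorization always carries a factor of $y^k$, which annihilates the relevant $\Ext^1$. Three preliminary observations set the stage. First, $R^\#/(y)\cong R$ has dimension $\dim R^\#-1$, so $y$ — and hence each power $y^k$ — is a nonzerodivisor on every MCM $R^\#$-module. Second, $(f+y^n,y^k)=(f,y^k)$ whenever $k\le n$, so $R^\#/(y^k)=A_k$; and since $R^\#$ is a hypersurface, $\Omega_{R^\#}^2\cong\id$ on $\MCM(R^\#)$ up to free summands. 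Third, $R^\#\cong\Omega_{R^\#}(R)$ with $R=R^\#/(y)$ a MCM $A_k$-module, so $\Sigma_k$ contains all free $R^\#$-modules and is closed under adjoining them; this renders the "up to free summands" caveats harmless.

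For the construction, let $\pi\colon F\to N$ be a projective cover, so $\Omega_{R^\#}(N)=\ker\pi$. Since $y^kN\subseteq\m N$, the composite $F\to N\to N/y^kN$ is again a projective cover, with kernel $\pi^{-1}(y^kN)=\Omega_{R^\#}(N/y^kN)$. Because $y^k$ is a nonzerodivisor on $N$, the restriction of $\pi$ to $\pi^{-1}(y^kN)$ has image $y^kN$ and factors uniquely as $y^k\circ p$ with $p\colon\Omega_{R^\#}(N/y^kN)\to N$ surjective and $\ker p=\Omega_{R^\#}(N)$; this gives the first short exact sequence. Moreover the inclusion $\Omega_{R^\#}(N/y^kN)\hookrightarrow F$ is a morphism from this sequence to the projective-cover sequence $0\to\Omega_{R^\#}(N)\to F\to N\to 0$ which is the identity on $\Omega_{R^\#}(N)$ and multiplication by $y^k$ on $N$. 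Applying all of this with $\Omega_{R^\#}(N)$ in place of $N$, and using $\Omega_{R^\#}^2\cong\id$, produces the second sequence and the analogous comparison morphism. Finally, $N/y^kN$ is annihilated by $y^k$ and has depth $\dim R^\#-1=\dim A_k$ over $R^\#$ (as $y^k$ is regular on the MCM module $N$), hence is a MCM $A_k$-module; its first $R^\#$-syzygy $\Omega_{R^\#}(N/y^kN)$ is then MCM over $R^\#$ by the depth lemma, so it lies in $\Sigma_k$, and likewise $\Omega_{R^\#}\!\big(\Omega_{R^\#}(N)/y^k\Omega_{R^\#}(N)\big)\in\Sigma_k$.

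The heart is the approximation property, and the key point is that for every $X\in\Sigma_k$, multiplication by $y^k$ on $X$ factors as $X\xrightarrow{\alpha}G\xrightarrow{\beta}X$ with $G$ free: writing $X$ as a summand of $\Omega_{R^\#}(M)$ with $y^kM=0$ and $\Omega_{R^\#}(M)\hookrightarrow G\twoheadrightarrow M$, $G$ free, the inclusion $y^kG\subseteq\Omega_{R^\#}(M)$ produces a map $G\to\Omega_{R^\#}(M)$ whose composite with the inclusion is multiplication by $y^k$, and one then passes to the summand. Consequently $y^k$ annihilates $\Ext^1_{R^\#}(X,-)$ and, on MCM modules (where $\Ext^1_{R^\#}(-,R^\#)=0$), also $\Ext^1_{R^\#}(-,X)$. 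Now apply $\Hom_{R^\#}(X,-)$ to the first sequence: $p$ is a right $\Sigma_k$-approximation of $N$ precisely when the connecting map $\Hom_{R^\#}(X,N)\to\Ext^1_{R^\#}(X,\Omega_{R^\#}N)$ vanishes for all $X\in\Sigma_k$; by naturality of connecting homomorphisms along the comparison morphism above (identity on the submodule, multiplication by $y^k$ on the quotient) this connecting map equals $y^k$ times the connecting map of the projective-cover sequence, so it lands in $y^k\Ext^1_{R^\#}(X,\Omega_{R^\#}N)=0$. Symmetrically, applying $\Hom_{R^\#}(-,X)$ to the second sequence, the obstruction to extending a morphism $N\to X$ along $N\hookrightarrow\Omega_{R^\#}(\Omega_{R^\#}N/y^k\Omega_{R^\#}N)$ lies in $\Ext^1_{R^\#}(\Omega_{R^\#}N,X)$ and equals $y^k$ times the connecting map of the projective-cover sequence of $\Omega_{R^\#}N$; since $\Omega_{R^\#}N$ is MCM and $X\in\Sigma_k$, this group is killed by $y^k$, so the obstruction vanishes and the second sequence is a left $\Sigma_k$-approximation.

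I expect the main obstacle to be conceptual rather than computational: the natural first guess, that $\Ext^1_{R^\#}(X,\Omega_{R^\#}N)$ itself vanishes for $X\in\Sigma_k$, is simply false, so the argument must instead isolate the factor of $y^k$ in the obstruction class, after which it dies in that $\Ext$ group; producing the explicit comparison morphism from the syzygy sequence to the projective-cover sequence is what makes this precise, and the remainder is bookkeeping (including the harmless tracking of free summands via the third preliminary observation). Since the construction used only that $R^\#$ is a hypersurface and that $y$ is a nonzerodivisor, it remains valid for an arbitrary regular local ring $S$.
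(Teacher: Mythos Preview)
Your proof is correct and follows essentially the same approach as the paper: both construct the sequences as the pullback of the projective-cover sequence along multiplication by $y^k$ (so that the extension class is $y^k\chi$), and both verify the approximation property by showing that $y^k$ annihilates $\Ext^1_{R^\#}(X,-)$ and $\Ext^1_{R^\#}(-,X)$ for $X\in\Sigma_k$. Your verification that $y^k$ kills these $\Ext$ groups via an explicit factorization of $y^k\cdot\id_X$ through a free module is equivalent to the paper's dimension-shift argument $\Ext^1_{R^\#}(\Omega_{R^\#}M,-)\cong\Ext^2_{R^\#}(M,-)$ with $y^kM=0$; you are somewhat more careful than the paper in checking that the middle terms actually lie in $\Sigma_k$ and that free modules belong to $\Sigma_k$.
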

}

We give two applications of Theorem~\ref{thm:A}.  

First we extend a result of Iyama, Leuschke, and Quarles to the effect that the
endomorphism ring of a representation generator for the MCM modules over a simple
(ADE) hypersurface singularity has finite global dimension.  When
$f \in k[\![x_0, \dots, x_d]\!]$ defines a simple singularity of dimension
$d \geq 1$, the polynomial $f+y^n$ does not in general define a simple singularity
for $n \geq 3$, so that in particular the branched cover does not have a
representation generator. However we show that the $n$-fold branched cover admits a
MCM module $M$ such that $\Lambda = \End_{R^\#}(M)$ behaves like a ``non-commutative
hypersurface ring,'' in the sense that $\Lambda$ is Iwanaga-Gorenstein and every
finitely generated $\Lambda$-module has a projective resolution which is eventually
periodic of period at most $2$.

Second we give upper bounds on the dimension, in the sense of Rouquier, of the stable
category of MCM modules over a complete hypersurface ring defined by
a polynomial of the form $x_0^{a_0} + \cdots + x_d^{a_d}$.  These bounds sharpen
the general bounds of Ballard-Favero-Katzarkov~\cite{Ballard-Favero-Katzarkov:2012}
for complete isolated hypersurface singularities.

\begin{notation*}
  Unless otherwise specified, all modules are left modules.
  
  For a ring $\Gamma$ and a left $\Gamma$-module $X$, we write $\Omega_\Gamma(X)$ for
  an arbitrary first syzygy of $X$. This is uniquely defined up to projective
  summands.  If $\Gamma$ is commutative local, $\Omega_\Gamma(X)$ is unique up to
  isomorphism.
\end{notation*}

\section{Approximating MCM modules over the branched cover}
\label{sect:approx}

We make essential use of the following general lemma. 

\begin{lemma}
  \label{lem:ses}
  Let $\Gamma$ be a ring and $X$ a $\Gamma$-module. Let $z \in \Gamma$ be a
  non-zerodivisor on $X$. Then there is a short exact sequence
  \begin{equation}\label{eq:gen-ses}
    0 \to \Omega_\Gamma(X) \to \Omega_\Gamma(X/zX) \to X \to 0
  \end{equation}
  of $\Gamma$-modules.
\end{lemma}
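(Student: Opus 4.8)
The plan is to build the short exact sequence by comparing projective presentations of $X$ and $X/zX$. First I would choose a projective cover (or just a surjection from a projective) $p\colon P \to X$, giving a short exact sequence $0 \to \Omega_\Gamma(X) \to P \xrightarrow{p} X \to 0$. Multiplication by $z$ is a non-zerodivisor on $X$, so $0 \to X \xrightarrow{z} X \to X/zX \to 0$ is exact. The natural move is to lift the composite $P \xrightarrow{p} X \twoheadrightarrow X/zX$ to the projective $P$: since $P$ is projective it is also a presentation of $X/zX$ once we enlarge it, but more precisely I would form the pullback (fiber product) of $P \to X/zX$ along $X \twoheadrightarrow X/zX$, or equivalently construct the mapping cone of $z\colon X \to X$ at the level of resolutions.

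Concretely, here is the construction I expect to use. Start from $0 \to \Omega_\Gamma(X) \xrightarrow{\iota} P \xrightarrow{p} X \to 0$. Consider the map $P \oplus P \to X/zX$ defined on the first factor by $P \xrightarrow{p} X \to X/zX$ and on the second factor as $0$ — no, better: the relevant projective cover of $X/zX$ can be taken to be $P$ itself, via $P \xrightarrow{p} X \to X/zX$, and I claim its kernel is an extension of $X$ by $\Omega_\Gamma(X)$. Indeed, the kernel of $P \to X/zX$ consists of those $a \in P$ with $p(a) \in zX$; mapping such $a$ to the unique $x$ with $p(a) = zx$ (unique because $z$ is a non-zerodivisor on $X$) gives a surjection $\Ker(P \to X/zX) \to X$ whose kernel is exactly $\Ker p = \Omega_\Gamma(X)$. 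This yields
\[
0 \to \Omega_\Gamma(X) \to \Ker(P \to X/zX) \to X \to 0,
\]
and since $\Ker(P \to X/zX) = \Omega_\Gamma(X/zX)$ (up to the usual projective-summand ambiguity in $\Omega$, which is harmless because $P$ surjects onto $X/zX$), this is the desired sequence \eqref{eq:gen-ses}.

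The main point requiring care — the only real obstacle — is the well-definedness and surjectivity of the map $\Ker(P \to X/zX) \to X$: this is exactly where the hypothesis that $z$ is a non-zerodivisor on $X$ enters, since I need $zx = zx'$ to force $x = x'$, and I need every $x \in X$ to be hit, which follows by lifting $zx \in zX \subseteq X$ back along the surjection $p$. I would also spell out why $P \xrightarrow{p} X \to X/zX$ is surjective (clear, as $p$ is) so that its kernel legitimately represents $\Omega_\Gamma(X/zX)$. Everything else is a diagram chase. If one prefers a more functorial phrasing, the same sequence arises as the long exact homology sequence of the short exact sequence of complexes $0 \to (P_\bullet) \xrightarrow{z} (P_\bullet) \to (P_\bullet/zP_\bullet) \to \cdots$ compared with the Koszul-type resolution, but the elementary kernel computation above is the cleanest and avoids extra hypotheses on $\Gamma$.
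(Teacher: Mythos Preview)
Your argument is correct and is essentially the paper's proof unpacked elementwise: the paper forms the pullback of the free presentation $0 \to \Omega_\Gamma(X) \to F \to X \to 0$ along multiplication by $z \colon X \to X$, and the resulting pullback module is canonically identified (via $(a,x) \mapsto a$, using that $z$ is a non-zerodivisor on $X$) with your $\Ker(P \to X/zX)$, with the two projections matching your maps to $X$ and into $P$. So the approaches coincide; the paper's phrasing via the pullback diagram is just a diagrammatic repackaging of your explicit kernel computation.
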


\begin{proof}
  Let $0 \to \Omega_\Gamma(X) \to F \to X \to 0$ be a free presentation of
  $X$. Multiplication by $z$ on $X$ induces a pullback diagram 
  \[
  \xymatrix{
            &    & 0 \ar[d]   & 0 \ar[d] \\
    0 \ar[r] & \Omega_\Gamma(X) \ar[r] \ar@{=}[d]  & P \ar[r] \ar[d] & X \ar[r]
    \ar[d]^{z} & 0 \\
    0 \ar[r] & \Omega_\Gamma(X) \ar[r]  & F \ar[r] \ar[d] & X \ar[r] \ar[d] &
    0\\
            &    & X/zX \ar@{=}[r] \ar[d]  & X/zX \ar[d] \\
            &    & 0          & 0
  }
  \]
  the middle column of which shows that $P \cong \Omega_\Gamma(X/zX)$.
\end{proof}

\begin{lemma}
  \label{lem:speed-kills}
  Let $Q$ be a commutative Noetherian ring, $X$ a finitely generated $Q$-module, and
  $z$ a non-zerodivisor on $X$. The following conditions are equivalent.
  \begin{enumerate}[\quad(i)]
  \item $\Omega_Q (X/zX) \cong X \oplus \Omega_Q(X)$;
  \item $z$ annihilates $\Ext_Q^1(X,\Omega_Q(X))$;
  \item $z$ annihilates $\Ext_Q^1(X,Y)$ for every finitely generated $Q$-module $Y$.
  \end{enumerate}
\end{lemma}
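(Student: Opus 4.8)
The plan is to show that each of (i), (ii), (iii) is equivalent to the single statement that $z$ annihilates the extension class $\eta \in \Ext_Q^1(X, \Omega_Q(X))$ of a fixed free presentation $0 \to \Omega_Q(X) \to F \to X \to 0$ of $X$.

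First I would record two facts. (a) By its construction in Lemma~\ref{lem:ses}, the short exact sequence \eqref{eq:gen-ses} is the pullback of the chosen presentation along multiplication by $z$ on $X$; consequently its extension class is $z\eta$, the $Q$-multiple of $\eta$ under the $Q$-module structure on $\Ext_Q^1(X,\Omega_Q(X))$ (equivalently, $z\eta$ is the obstruction to lifting $z\colon X\to X$ along $F\twoheadrightarrow X$). Hence \eqref{eq:gen-ses} splits if and only if $z\eta=0$. Moreover a splitting of \eqref{eq:gen-ses} gives the isomorphism in (i), and conversely that isomorphism forces \eqref{eq:gen-ses} to split by Miyata's theorem (a short exact sequence of finitely generated modules over a commutative Noetherian ring whose middle term is isomorphic to the direct sum of the two outer terms is split). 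So (i) is equivalent to $z\eta=0$. (b) Applying $\Hom_Q(-,Y)$ to the presentation and using $\Ext_Q^1(F,Y)=0$ yields a surjection $\Hom_Q(\Omega_Q(X),Y)\twoheadrightarrow\Ext_Q^1(X,Y)$ sending $g$ to the pushout $g_*\eta$; thus every element of $\Ext_Q^1(X,Y)$ has the form $g_*\eta$, and $z\cdot g_*\eta=g_*(z\eta)$ because pushforward is $Q$-linear.

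Granting (a) and (b), the equivalences fall out. Condition (iii) implies (ii) by specializing to $Y=\Omega_Q(X)$. Condition (ii) implies $z\eta=0$, since $\eta$ itself lies in $\Ext_Q^1(X,\Omega_Q(X))$. And if $z\eta=0$, then for any finitely generated $Y$ and any $h=g_*\eta\in\Ext_Q^1(X,Y)$ we get $zh=g_*(z\eta)=0$, so $z$ annihilates $\Ext_Q^1(X,Y)$ and (iii) holds. Combined with the equivalence (i)$\iff$($z\eta=0$) from (a), all three conditions coincide.

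The whole argument is formal diagram-chasing with the long exact sequences of $\Ext$, except for one step: deducing from the bare module isomorphism in (i) that \eqref{eq:gen-ses} actually splits. That is where Miyata's theorem --- and with it the standing Noetherian and finite-generation hypotheses --- is genuinely needed, and I expect it to be the only real subtlety. A minor bookkeeping remark is also in order, namely that syzygies (hence $\Omega_Q(X/zX)$ in \eqref{eq:gen-ses}) are defined only up to projective summands, so (i) should be read with that convention in force; over a local ring $Q$ this ambiguity disappears.
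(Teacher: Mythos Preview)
Your proof is correct and follows essentially the same approach as the paper's own proof: both identify the class of \eqref{eq:gen-ses} as $z\eta$, invoke Miyata's theorem for the equivalence of (i) with splitting, and use the surjection $\Hom_Q(\Omega_Q(X),Y)\twoheadrightarrow\Ext_Q^1(X,Y)$ (implicitly in the paper, explicitly in your part (b)) to pass between (ii) and (iii). The paper simply declares the equivalence of (ii) and (iii) to be ``well known'', whereas you write out the argument; your remarks on the syzygy convention are also apt but go beyond what the paper records.
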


\begin{proof}
  Indeed, the short exact sequence~\eqref{eq:gen-ses} is the image in
  $\Ext_Q^1(X,\Omega_Q(X))$ of $z\cdot \id_X \in \End_Q(X)$. Therefore $z$ kills
  $\Ext_Q^1(X,\Omega_Q(X))$ if and only if~\eqref{eq:gen-ses} splits, which by
  Miyata's theorem~\cite{Miyata} happens if and only if
  $\Omega_Q(X/zX) \cong X \oplus \Omega_Q(X)$.  The equivalence of the second and
  third conditions is well known.
\end{proof}

Now we consider the $n$-fold branched cover $R^\#$ of a hypersurface ring $R$.  As in
the Introduction, let $S$ be a regular local ring of dimension $d+1$, let $f$ be a
non-zero non-unit of $S$, and let $R = S/(f)$ be the $d$-dimensional hypersurface
ring.  Fix $n \geq 2$ and set $R^\# = S[y]/(f+y^n)$. Then Lemma~\ref{lem:ses} yields
the following.

\begin{proposition}
  \label{prop:ses-mf}
  Let $N$ be a MCM $R^\#$-module. For each $k \geq 1$, we have:
  \begin{enumerate}[\quad(i)]
  \item\label{item:ses} There are short exact sequences
    \begin{equation}\label{eq:approx}
      0 \to \Omega_{R^\#}(N) \to \Omega_{R^\#}(N/y^kN) \to N \to 0
    \end{equation}
    and
    \begin{equation}
      \label{eq:otherapprox}
      0 \to N \to \Omega_{R^\#}(\Omega_{R^\#}(N)/y^k\Omega_{R^\#}(N)) \to
      \Omega_{R^\#}(N) \to 0\,.
    \end{equation}
  % \item\label{item:mf} The pair  
  %   \[
  %   \left(
  %     \begin{bmatrix} \psi & y^k I \\ 0 & \phi\end{bmatrix}\,,\ 
  %     \begin{bmatrix} \phi & -y^k I \\ 0 & \psi\end{bmatrix} 
  %   \right)
  %   \]
  %   is a matrix factorization of $f+y^n$ with cokernel isomorphic to
  %   $\Omega_{R^\#}(N/y^k N)$.
  \item \label{item:speed-kills} We have
    $\Omega_{R^\#}(N/y^k N) \cong N \oplus \Omega_{R^\#}(N)$ if and only if $y^k$
    annihilates $\Ext_{R^\#}^1(N, \Omega_{R^\#}(N))$.
  \end{enumerate}
\end{proposition}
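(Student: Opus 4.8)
The plan is to derive the whole proposition from the two general lemmas already established, so that the only real work is verifying their hypotheses for $R^\#$. Recall that $R^\# = S[y]/(f+y^n)$ is a hypersurface ring, in particular Cohen--Macaulay of dimension $d+1$.

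The first thing I would record is the elementary but crucial fact that \emph{$y$, and hence $y^k$ for every $k \ge 1$, is a nonzerodivisor on every MCM $R^\#$-module $N$.} Indeed, for $\p \in \operatorname{Ass}_{R^\#}(N)$ one has $\depth_{R^\#} N \le \dim R^\#/\p$, so since $N$ is MCM this forces $\dim R^\#/\p \ge d+1$; on the other hand $R^\#/(y) \cong S/(f) = R$ has dimension $d$, so the containment $y \in \p$ would give $\dim R^\#/\p \le d$, a contradiction. Granting this, sequence~\eqref{eq:approx} is exactly the output of Lemma~\ref{lem:ses} applied with $\Gamma = R^\#$, $X = N$, and $z = y^k$.

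For sequence~\eqref{eq:otherapprox} I would run the same argument on $\Omega_{R^\#}(N)$ in place of $N$. Since $R^\#$ is Cohen--Macaulay, $\Omega_{R^\#}(N)$ is again MCM (apply the depth lemma to a presentation $0 \to \Omega_{R^\#}(N) \to F \to N \to 0$ with $F$ free), so $y^k$ is a nonzerodivisor on it, and Lemma~\ref{lem:ses} produces
\[
0 \to \Omega_{R^\#}^2(N) \to \Omega_{R^\#}\!\bigl(\Omega_{R^\#}(N)/y^k\Omega_{R^\#}(N)\bigr) \to \Omega_{R^\#}(N) \to 0.
\]
Now I would invoke that $R^\#$ is a hypersurface: by Eisenbud's theorem on the eventual two-periodicity of free resolutions over a hypersurface ring (equivalently, the matrix-factorization description of MCM modules), $\Omega_{R^\#}^2(N) \cong N$ up to free summands; since $\Omega_{R^\#}$ is itself only well defined up to free summands, these can be absorbed into the middle term, yielding~\eqref{eq:otherapprox}.

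Finally, part~(ii) is immediate: apply Lemma~\ref{lem:speed-kills} with $Q = R^\#$, $X = N$, and $z = y^k$. Its hypotheses hold by the nonzerodivisor fact above, and the equivalence of conditions (i) and (ii) in that lemma is precisely the asserted equivalence. I do not anticipate a genuine obstacle here; the one step that is more than formal bookkeeping is the appeal to Eisenbud's two-periodicity in sequence~\eqref{eq:otherapprox} (together with the minor free-summand accounting it entails), while everything else follows by directly applying Lemmas~\ref{lem:ses} and~\ref{lem:speed-kills} once the nonzerodivisor hypothesis has been checked.
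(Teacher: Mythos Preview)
Your proposal is correct and follows essentially the same route as the paper: both derive \eqref{eq:approx} and part~(ii) directly from Lemmas~\ref{lem:ses} and~\ref{lem:speed-kills}, and obtain \eqref{eq:otherapprox} by applying Lemma~\ref{lem:ses} to $\Omega_{R^\#}(N)$ and then invoking the hypersurface two-periodicity $\Omega_{R^\#}^2(N) \cong N$. Your version is in fact slightly more explicit than the paper's, since you spell out why $y^k$ is a nonzerodivisor on MCM modules, whereas the paper handles the free-summand bookkeeping by a brief case split (free versus no free summands) rather than by absorption.
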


\begin{proof}
  Existence of \eqref{eq:approx} and the second statement both follow immediately
  from Lemma~\ref{lem:ses}.  For \eqref{eq:otherapprox} it suffices to observe that
  since $R^\#$ is a hypersurface, if $N$ has no free direct summands, $N$ is
  isomorphic to its own second syzygy. If on the other hand $N$ is free, then both
  exact sequences degenerate to the isomorphism $R^\# \cong \Omega_{R^\#}(R^\#/(y^k))$.
\end{proof}

We emphasize that $f \neq 0$ is required for this result, since we need $y$ to be a
non-zerodivisor on $R^\#$. 

\begin{remark}
  \label{rmk:Ragnar} We observe that the results of Kn\"orrer and Herzog-Popescu from
  the Introduction hold true in our level of generality. The proof of~\cite[Theorem
  2.6]{Herzog-Popescu:1997} uses the \emph{Noether different} $\mathcal {N}_A^B$ of a
  ring homomorphism $A \to B$.  Let $\mu \colon B \otimes_A B \to B$ denote the
  multiplication map, and define
  \[
  \mathcal {N}_A^B = \mu\left(\Ann_{B\otimes_A B}( \ker \mu)\right)\,.
  \]
  The two results we need about the Noether different are contained
  in~\cite{Buchweitz:1987} in the generality required. 

  First,~\cite[Theorem 7.8.3]{Buchweitz:1987} implies that if $A \to B$ is a
  homomorphism of commutative Noetherian rings of finite Krull dimension, with $A$
  regular and $B$ Gorenstein, and such that $B$ is a finitely generated free
  $A$-module, then $\mathcal {N}_A^B$ annihilates all $\Ext$-modules $\Ext_B^1(X,Y)$
  with $X$, $Y$ finitely generated $B$-modules.

  The second result is contained in~\cite[Example 7.8.5]{Buchweitz:1987}. If $A$ is
  an arbitrary commutative ring and $B = A[y]/(g(y))$ for some polynomial $g(y) \in
  A[y]$, then the (formal) derivative $g'(y)$ is contained in $\mathcal {N}_A^B$.
\end{remark}

\begin{proposition}
  \label{prop:newHP}
  Let $S$ be an arbitrary regular local ring, $f \in S$ a non-zero non-unit,
  $R = S/(f)$, and $R^\# = S[y]/(f+y^n)$ for some $n \geq 1$. Assume that $n$ is a
  unit in $S$. Then for any MCM $R^\#$-module $N$, we have
  \[
  \Omega_{R^\#}(N/y^{n-1} N) \cong
  N \oplus \Omega_{R^\#}(N)\,.
  \]
\end{proposition}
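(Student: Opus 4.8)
The plan is to combine Proposition~\ref{prop:ses-mf}\eqref{item:speed-kills} with the two facts about the Noether different recalled in Remark~\ref{rmk:Ragnar}. By Proposition~\ref{prop:ses-mf}\eqref{item:speed-kills}, it suffices to show that $y^{n-1}$ annihilates $\Ext^1_{R^\#}(N,\Omega_{R^\#}(N))$ for every MCM $R^\#$-module $N$; in fact it is convenient to show the stronger statement that $y^{n-1}$ annihilates $\Ext^1_{R^\#}(X,Y)$ for all finitely generated $R^\#$-modules $X$ and $Y$, which is exactly the content of condition (iii) in Lemma~\ref{lem:speed-kills} for the non-zerodivisor $z = y^{n-1}$.

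First I would set $A = S$ and $B = R^\# = S[y]/(f+y^n)$, and observe that this is precisely the situation of Remark~\ref{rmk:Ragnar}: $S$ is regular of finite Krull dimension, $R^\#$ is a hypersurface hence Gorenstein, and $R^\# = S[y]/(g(y))$ with $g(y) = f + y^n$ is a finitely generated free $S$-module (with basis $1, y, \dots, y^{n-1}$). Hence the first cited result, \cite[Theorem 7.8.3]{Buchweitz:1987}, tells us that the Noether different $\mathcal N_S^{R^\#}$ annihilates every $\Ext^1_{R^\#}(X,Y)$ with $X,Y$ finitely generated. Next, the second cited result, \cite[Example 7.8.5]{Buchweitz:1987}, gives that $g'(y) = n y^{n-1} \in \mathcal N_S^{R^\#}$. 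Since $n$ is a unit in $S$, it is a unit in $R^\#$, so $y^{n-1} = n^{-1}\cdot(n y^{n-1}) \in \mathcal N_S^{R^\#}$ as well. Therefore $y^{n-1}$ annihilates all $\Ext^1_{R^\#}$-modules, and in particular $\Ext^1_{R^\#}(N,\Omega_{R^\#}(N))$.

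Finally I would invoke Proposition~\ref{prop:ses-mf}\eqref{item:speed-kills}: since $y$ is a non-zerodivisor on the MCM module $N$ (as $R^\# = S[y]/(f+y^n)$ with $f \neq 0$, so $y$ is $R^\#$-regular and hence regular on any MCM module), the condition that $y^{n-1}$ kills $\Ext^1_{R^\#}(N,\Omega_{R^\#}(N))$ is equivalent to the desired splitting $\Omega_{R^\#}(N/y^{n-1}N) \cong N \oplus \Omega_{R^\#}(N)$. This completes the argument.

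The only real subtlety is bookkeeping: one must check that the hypotheses of \cite[Theorem 7.8.3]{Buchweitz:1987} are genuinely met in the local setting — in particular that $R^\#$ has finite Krull dimension and is Gorenstein (both clear, since $R^\#$ is a complete intersection, or simply a hypersurface, over a regular local ring) and that $R^\#$ is free over $S$ of finite rank, which is immediate from the presentation. There is no hard analytic or homological obstacle here; the content is entirely in correctly citing Buchweitz's computation of the Noether different of a monogenic extension and noticing that the factor $n$ is invertible. It is worth emphasizing, as the authors do after Proposition~\ref{prop:ses-mf}, that the hypothesis $f \neq 0$ is exactly what guarantees $y$ is a non-zerodivisor on $R^\#$, which is needed to apply Proposition~\ref{prop:ses-mf}.
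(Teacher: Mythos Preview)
Your proof is correct and follows essentially the same route as the paper: reduce via Proposition~\ref{prop:ses-mf}\eqref{item:speed-kills} to showing $y^{n-1}$ annihilates $\Ext^1_{R^\#}(N,\Omega_{R^\#}(N))$, then invoke the two facts about the Noether different from Remark~\ref{rmk:Ragnar} together with invertibility of $n$. You have simply spelled out the hypothesis-checking that the paper leaves implicit.
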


\begin{proof}
  By Proposition~\ref{prop:ses-mf}(\ref{item:speed-kills}), it is enough to show
  that $y^{n-1}$ annihilates $\Ext_{R^\#}^1(N, \Omega_{R^\#}(N))$.  Since
  $\frac{\partial} {\partial y}(f+y^n) = n y^{n-1}$ and $n$ is invertible, this
  follows from Remark~\ref{rmk:Ragnar}.
\end{proof}

Next we want to consider the relationship between the short exact
sequence~\eqref{eq:approx} and the module $N$.  We recall here some definitions from
``relative homological algebra''.

\begin{definition}
  \label{def:approx}
  Let $\cala$ be an abelian category, $\calx \subseteq \cala$ a full subcategory, and
  $M$ an object of $\cala$.
  \begin{enumerate}[\quad(i)]
  \item A \emph{right $\calx$-approximation} (or \emph{$\calx$-precover}) of $M$ is a
    morphism $q \colon X \to M$, with $X \in \calx$, such that any $f \colon X' \to
    M$ with $X' \in \calx$ factors through $q$. Equivalently, the induced
    homomorphism $\Hom_\cala (X',X) \to \Hom_\cala(X',M)$ is surjective.
  \item A \emph{left $\calx$-approximation} (or \emph{$\calx$-preenvelope}) is a
    morphism $j \colon M \to X$ with $X \in \calx$ such that any $g \colon M \to X'$
    with $X' \in \calx$ factors through $j$.  Equivalently the induced homomorphism
    $\Hom_\cala(X,X') \to \Hom_\cala(M,X')$ is surjective. 
  \item A right $\calx$-approximation $q \colon X \to M$ is \emph{minimal} (or a
    \emph{$\calx$-cover}) if every endomorphism $\phi \in \End_\cala(X)$ with $\phi q = q$ is an
    isomorphism.  Dually, a left $\calx$-approximation $j \colon M \to X$ is
    \emph{minimal} (a \emph{$\calx$-envelope}) if every endomorphism $\phi
    \in \End_\cala(X)$ with $j \phi = j$ is an isomorphism. 
  \item We say that $\calx$ is \emph{contravariantly finite} (resp.\
    \emph{covariantly finite}) in $\cala$ if every object $M$ of $\cala$ has a right
    (resp.\ left) $\calx$-approximation. 
  \item A sequence $A \stackrel{f}{\to} B \stackrel{g}{\to} C$ is
    \emph{$\calx$-exact} (resp. \emph{$\calx$-coexact}) if the induced sequence
    $\Hom_\cala(X,A) \stackrel{f_*}{\to} \Hom_\cala(X,B) \stackrel{g_*}{\to}
    \Hom_\cala(X,C)$ (resp.
    $\Hom_\cala(C,X) \stackrel{g^*}{\to} \Hom_\cala(B,X) \stackrel{f^*}{\to}
    \Hom_\cala(A,X)$) is exact for all $X \in \calx$.
  \end{enumerate}
\end{definition}

Approximations need not exist, and even when they do, minimal approximations need not
exist.

We routinely abuse language by referring to the object $X$ as an approximation of $M$
(on the appropriate side). Furthermore, when $\cala$ is a module category and a right
(resp.\ left) $\calx$-approximation happens to be surjective (resp.\ injective) we
will refer to the whole short exact sequence $0 \to \ker q \to X \xto{q} M \to 0$
(resp.\ $0 \to M \xto{j}  X \to \cok j \to 0$) as the approximation.

The next result contains Theorem~\ref{thm:A} from the Introduction.  Recall the
definition of $\Sigma_k$:
\[
\Sigma_k = \add\left\{\Omega_{R^\#}(M) \,\middle|\, M \text{ is a MCM
    $A_k$-module}\right\}\,.
\]

\begin{theorem}\label{thm:approx}
  Let $N$ be a MCM $R^\#$-module.  The short exact sequence \eqref{eq:approx} is a
  right $\Sigma_k$-approximation of $N$, and \eqref{eq:otherapprox} is a left
  $\Sigma_k$-approximation of $N$.
\end{theorem}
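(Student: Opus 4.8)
The plan is to show that \eqref{eq:approx} is a right $\Sigma_k$-approximation; the statement for \eqref{eq:otherapprox} will then follow by a syzygy-duality argument, since over the hypersurface $R^\#$ the functor $\Omega_{R^\#}$ is an involution on $\MCM(R^\#)$ modulo free summands and interchanges the two sequences (apply $\Omega_{R^\#}$ to \eqref{eq:approx} for $\Omega_{R^\#}(N)$ in place of $N$, and use that $\Omega_{R^\#}^2 \cong \id$ on the stable category to recover \eqref{eq:otherapprox}). So the heart of the matter is the approximation property of the surjection $q\colon \Omega_{R^\#}(N/y^kN)\to N$ from \eqref{eq:approx}.

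First I would verify that the middle term $\Omega_{R^\#}(N/y^kN)$ actually lies in $\Sigma_k$: the module $N/y^kN$ is annihilated by $y^k$, hence is a module over $A_k = S[y]/(f,y^k) = R^\#/(y^k)$, and one must check it is MCM over $A_k$ (of dimension $d$); this holds because $y$ is a non-zerodivisor on the MCM $R^\#$-module $N$, so $N/y^kN$ has depth $d$ and $\dim A_k = d$. Then $\Omega_{R^\#}(N/y^kN)\in\Sigma_k$ by definition. Next, given an arbitrary MCM $A_k$-module $M$ and a map $\varphi\colon \Omega_{R^\#}(M)\to N$, I must factor $\varphi$ through $q$. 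Applying $\Hom_{R^\#}(\Omega_{R^\#}(M),-)$ to \eqref{eq:approx}, it suffices to show the connecting map $\Hom_{R^\#}(\Omega_{R^\#}(M),N)\to \Ext^1_{R^\#}(\Omega_{R^\#}(M),\Omega_{R^\#}(N))$ sends $\varphi$ to $0$; equivalently, $\Ext^1_{R^\#}(\Omega_{R^\#}(M),\Omega_{R^\#}(N))$ is killed by the relevant obstruction, and the cleanest route is to show the whole group $\Ext^1_{R^\#}(\Omega_{R^\#}(M),-)$ — or at least the image of $\varphi$ under the connecting map — vanishes. The key input is that $M$ is killed by $y^k$: the extension class of \eqref{eq:approx}, viewed in $\Ext^1_{R^\#}(N,\Omega_{R^\#}(N))$, is the image of $y^k\cdot\id_N$ (by Lemma~\ref{lem:speed-kills}'s proof), so pulling back along $\varphi$ and using that $y^k$ acts as $0$ on $M$ hence on $\Omega_{R^\#}(M)$ modulo free summands — more precisely, $y^k$ annihilates $\Ext^1_{R^\#}(\Omega_{R^\#}(M), Y)$ for every $Y$ because it annihilates $M$ and $\Ext^1_{R^\#}(\Omega_{R^\#}(M),Y)\cong \Ext^2_{R^\#}(M,Y)$ — the pulled-back class is $0$, giving the desired factorization.

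The main obstacle I anticipate is making precise the step ``$y^k$ annihilates $\Omega_{R^\#}(M)$ modulo free summands, hence annihilates the relevant $\Ext$.'' It is \emph{not} true that $y^k$ kills $\Omega_{R^\#}(M)$ as a module (it only kills $M$), so one cannot argue naively. The correct formulation is cohomological: for any $R^\#$-modules, $\Ext^1_{R^\#}(\Omega_{R^\#}(M),Y)\cong \Ext^2_{R^\#}(M,Y)$, and since $y^k$ annihilates $M$ it annihilates $\Ext^i_{R^\#}(M,Y)$ for all $i$ and all $Y$; applying this with $Y = \Omega_{R^\#}(N)$ shows that multiplication by $y^k$ is zero on $\Ext^1_{R^\#}(\Omega_{R^\#}(M),\Omega_{R^\#}(N))$. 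Now chasing the long exact sequence obtained by applying $\Hom_{R^\#}(\Omega_{R^\#}(M),-)$ to \eqref{eq:approx}: the connecting homomorphism $\delta$ sends $\varphi$ to $\varphi^*(\xi)$ where $\xi\in\Ext^1_{R^\#}(N,\Omega_{R^\#}(N))$ is the class of \eqref{eq:approx}; but $\xi = y^k\cdot\eta$ for $\eta$ the class of the universal sequence \eqref{eq:gen-ses} (equivalently $\xi$ is the image of $y^k\id_N$ under $\End_{R^\#}(N)\to\Ext^1_{R^\#}(N,\Omega_{R^\#}(N))$), so $\varphi^*(\xi) = y^k\cdot\varphi^*(\eta) = 0$ since $y^k$ kills the target group. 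Hence $\varphi$ lifts along $q$, proving \eqref{eq:approx} is a right $\Sigma_k$-approximation. Finally, I would dualize: replace $N$ by $\Omega_{R^\#}(N)$ in \eqref{eq:approx}, apply the (exact up to free summands, on $\MCM$) syzygy functor, and use $\Omega_{R^\#}^2\cong\id$ together with the fact that $\Omega_{R^\#}$ takes right approximations in $\MCM(R^\#)$ to left approximations (because $\Sigma_k$ is closed under $\Omega_{R^\#}^2$) to conclude \eqref{eq:otherapprox} is a left $\Sigma_k$-approximation.
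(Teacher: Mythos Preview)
Your argument for the right $\Sigma_k$-approximation is correct and essentially the same as the paper's: the class of \eqref{eq:approx} in $\Ext^1_{R^\#}(N,\Omega_{R^\#}(N))$ is $y^k$ times the class of the free presentation, and for $L\in\Sigma_k$ with $L\oplus L'\cong\Omega_{R^\#}(X)$ and $y^kX=0$ one has $\Ext^1_{R^\#}(L,-)\hookrightarrow\Ext^1_{R^\#}(\Omega_{R^\#}(X),-)\cong\Ext^2_{R^\#}(X,-)$, which is annihilated by $y^k$. (You also verify explicitly that $N/y^kN$ is MCM over $A_k$, which the paper leaves implicit.)

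The gap is in your treatment of the left approximation. The principle you invoke, that ``$\Omega_{R^\#}$ takes right approximations in $\MCM(R^\#)$ to left approximations,'' is not justified and is not a general fact about Frobenius categories or about subcategories closed under $\Omega$. Knowing that the surjection in \eqref{eq:otherapprox} is a right $\Sigma_k$-approximation of $\Omega_{R^\#}(N)$ (which is exactly what your first argument gives, applied to $\Omega_{R^\#}(N)$) does \emph{not} formally imply that the injection $N\hookrightarrow\Omega_{R^\#}(\Omega_{R^\#}(N)/y^k\Omega_{R^\#}(N))$ is a left $\Sigma_k$-approximation of $N$. Concretely: the left approximation property asks that for every $L\in\Sigma_k$ and every $\tau\colon N\to L$, the pushforward $\tau_*(\xi')$ vanishes in $\Ext^1_{R^\#}(\Omega_{R^\#}(N),L)$; since $\xi'=y^k\eta'$, this amounts to $y^k\cdot\Ext^1_{R^\#}(-,L)=0$ for $L\in\Sigma_k$, a statement with the $\Sigma_k$-object in the \emph{second} variable. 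Your Ext computation established the corresponding vanishing only in the first variable, and no rotation of triangles or application of $\Omega$ converts one into the other.

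The paper closes this gap by the obvious symmetric dimension shift: with $L\oplus L'\cong\Omega_{R^\#}(X)$ and $y^kX=0$, one computes
\[
\Ext^1_{R^\#}(N,L\oplus L')=\Ext^1_{R^\#}(N,\Omega_{R^\#}(X))\cong\Ext^2_{R^\#}(\Omega_{R^\#}(N),\Omega_{R^\#}(X))\cong\Ext^1_{R^\#}(\Omega_{R^\#}(N),X),
\]
which is killed by $y^k$ since $X$ is. You should replace the duality sketch with this direct calculation.
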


\begin{proof}
  To show that \eqref{eq:approx} is a $\Sigma_k$-approximation, it suffices to show
  that an arbitrary $R^\#$-homomorphism $\sigma \colon M \to N$, for
  $M \in \Sigma_k$, factors through $\Omega_{R^\#}(N/y^k N)$.  Equivalently, $\sigma$
  maps to zero in $\Ext_{R^\#}^1(M, \Omega_{R^\#}(N))$ in the long exact sequence
  \[
  \Hom_{R^\#}(M,\Omega_{R^\#}(N/y^k N)) \to \Hom_{R^\#}(M,N) \to
  \Ext_{R^\#}^1(M,\Omega_{R^\#}(N))
  \]
  obtained by applying $\Hom_{R^\#}(M,-)$ to \eqref{eq:approx}.

  Recall that \eqref{eq:approx} is the image of the short exact sequence
  $\chi = (0 \to \Omega_{R^\#}(N) \to F \to N \to 0) \in
  \Ext_{R^\#}^1(N,\Omega_{R^\#}(N))$ under multiplication by $y^k$.  It therefore
  suffices to show that $y^k\chi$ maps to zero in
  $\Ext_{R^\#}^1(M, \Omega_{R^\#}(N))$.  Since $M \in \Sigma_k$, we can write
  $M\oplus M' \cong \Omega_{R^\#}(X)$ for an $R^\#$-module $X$ annihilated by $y^k$
  and some $R^\#$-module $M'$. Then
  \[
  \begin{split}
    \Ext_{R^\#}^1(M\oplus M', \Omega_{R^\#}(N))
    &= \Ext_{R^\#}^1(\Omega_{R^\#}(X), \Omega_{R^\#}(N))\\
    &\cong \Ext_{R^\#}^2(X, \Omega_{R^\#}(N))
  \end{split}
  \]
  is annihilated by $y^k$ since $X$ is, which proves the claim.

  The statement about left approximation follows similarly, using the fact that
  \[
  \begin{split}
    \Ext_{R^\#}^1(N,M\oplus M')
    &= \Ext_{R^\#}^1(N, \Omega_{R^\#}(X))\\
    &\cong \Ext_{R^\#}^2(\Omega_{R^\#}(N), \Omega_{R^\#}(X))\\
    &\cong \Ext_{R^\#}^1(\Omega_{R^\#}(N), X)\
  \end{split}
  \]
  is also killed by $y^k$.
\end{proof}

Standard arguments about approximations now yield the following.

\begin{corollary}\label{cor:splitApprox} For a MCM $R^\#$-module $N$ and $k \geq 1$, the following are equivalent:
\begin{enumerate}
\item $N \in \Sigma_k$;
\item The sequence~\eqref{eq:approx} splits;
\item The sequence~\eqref{eq:otherapprox} splits;
\item $\Omega_{R^\#}(N) \in \Sigma_k$;
\item $y^k$ annihilates $\Ext^1_{R^\#}(N,\Omega_{R^\#}(N))$.
\end{enumerate}
\qed
\end{corollary}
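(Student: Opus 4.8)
The plan is to deduce Corollary~\ref{cor:splitApprox} as a formal consequence of Theorem~\ref{thm:approx}, Proposition~\ref{prop:ses-mf}, and the elementary fact that a short exact sequence which is simultaneously a right $\calx$-approximation and whose outer terms lie in $\calx$ must split. I would organize the equivalences into a cycle together with two easy side implications, so that no implication requires real work beyond citing what is already proved.

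First I would establish $(5)\Leftrightarrow(2)$: this is immediate from Proposition~\ref{prop:ses-mf}(\ref{item:speed-kills}) once one observes that $\Omega_{R^\#}(N/y^kN)\cong N\oplus\Omega_{R^\#}(N)$ is equivalent to the splitting of~\eqref{eq:approx}, since by Miyata's theorem (invoked already in the proof of Lemma~\ref{lem:speed-kills}) a short exact sequence $0\to\Omega_{R^\#}(N)\to E\to N\to 0$ splits if and only if $E\cong N\oplus\Omega_{R^\#}(N)$. Next, $(2)\Rightarrow(1)$ and $(2)\Rightarrow(4)$ are trivial: if~\eqref{eq:approx} splits then $N$ and $\Omega_{R^\#}(N)$ are both direct summands of $\Omega_{R^\#}(N/y^kN)$, which lies in $\Sigma_k$ by definition, and $\Sigma_k$ is closed under direct summands. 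For $(1)\Rightarrow(2)$ I would use that~\eqref{eq:approx} is a right $\Sigma_k$-approximation (Theorem~\ref{thm:approx}): if $N\in\Sigma_k$, then the identity map $N\to N$ factors through $\Omega_{R^\#}(N/y^kN)\to N$, producing a splitting. The implication $(4)\Rightarrow(1)$ follows the same way using that~\eqref{eq:otherapprox} is a left $\Sigma_k$-approximation: if $\Omega_{R^\#}(N)\in\Sigma_k$, the identity $\Omega_{R^\#}(N)\to\Omega_{R^\#}(N)$ factors through $N\to\Omega_{R^\#}(\Omega_{R^\#}(N)/y^k\Omega_{R^\#}(N))$, so~\eqref{eq:otherapprox} splits and in particular $N$ is a summand of a module in $\Sigma_k$, hence $N\in\Sigma_k$. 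Finally $(3)\Leftrightarrow(2)$: both sequences split under condition $(1)$ by the arguments just given, and conversely if either splits then $N$ or $\Omega_{R^\#}(N)$ lands in $\Sigma_k$, which by what precedes forces both to. Alternatively, one notes $(3)\Rightarrow(1)$ directly (if~\eqref{eq:otherapprox} splits then $N$ is a summand of a module in $\Sigma_k$) and closes the loop $(1)\Rightarrow(2)\Rightarrow(1)$ with $(3)$ hanging off via $(1)\Rightarrow(3)$.

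The only point requiring a moment's care — and the step I'd flag as the main, though minor, obstacle — is the passage from "$N$ is a direct summand of a module in $\Sigma_k$" back to "$N\in\Sigma_k$", and symmetrically for $\Omega_{R^\#}(N)$; this is where one uses that $\Sigma_k=\add\{\cdots\}$ is by construction closed under direct summands and finite direct sums, so the subtlety evaporates. One should also be slightly careful that the factorization of $\id_N$ through the approximation genuinely yields a section of the surjection in~\eqref{eq:approx} rather than merely some commuting triangle — but that is exactly the definition of a splitting of a short exact sequence with a specified right-hand term. Assembling the cycle $(1)\Rightarrow(2)\Rightarrow(4)\Rightarrow(1)$ together with $(2)\Leftrightarrow(5)$, $(2)\Leftrightarrow(3)$ then gives all five equivalences, and the proof is a few lines of diagram-free bookkeeping.
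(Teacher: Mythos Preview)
Your proposal is correct and is precisely the ``standard arguments about approximations'' the paper alludes to in the sentence preceding the corollary (which is stated without proof). One small slip: in your $(4)\Rightarrow(1)$ step, the identity map of $\Omega_{R^\#}(N)$ cannot literally factor through the injection $N \to \Omega_{R^\#}(\Omega_{R^\#}(N)/y^k\Omega_{R^\#}(N))$, since the domains do not match. The clean fix is to observe that \eqref{eq:otherapprox} is exactly \eqref{eq:approx} with $\Omega_{R^\#}(N)$ in place of $N$ (this is how it is constructed in Proposition~\ref{prop:ses-mf}), hence is also a \emph{right} $\Sigma_k$-approximation of $\Omega_{R^\#}(N)$; then $(4)$ yields a section of its surjection just as in $(1)\Rightarrow(2)$. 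Your alternative cycle at the end already sidesteps this issue.
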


\begin{remark}\label{rmk:relativeHomology} In \cite{AusSol:1993} Auslander and
  Solberg develop relative homological algebra from the perspective of sub-functors
  of $\Ext^1(-,-)$.  For any $k \geq 1$, $F_k(-,-) := y^k \Ext^1_{R^\#}(-,-)$ is such
  a sub-functor on $\MCM(R^\#)$.  The corresponding category of relatively
  $F_k$-projectives consists of all $N$ such that
  $F_k(N,-) = y^k \Ext^1_{R^\#}(N,-) = 0$, which coincides with $\Sigma_k$.
  Likewise, the category of relatively $F_k$-injectives, consisting of all $N$ such
  that $F_k(-,N) = y^k \Ext^1_{R^\#}(-,N) = 0$, also coincides with $\Sigma_k$, as
  $\Sigma_k$ is closed under syzygies and $\Omega^2_{R^\#}$ is isomorphic to the
  identity.  The fact that $\Sigma_k$ is functorially finite translates to the
  existence of enough relative $F_k$-projectives (and injectives).  Moreover, as the
  relative projectives and injectives coincide, we see that for each $k\geq 1$
  $\MCM(R^\#)$ has a (relative) Frobenius category structure with $\Sigma_k$ as the
  (relative) projectives.
\end{remark}

\begin{prop}
  Assume in addition that $S$ is complete and that $N$ is indecomposable.  Then
  \eqref{eq:approx} is a minimal right approximation of $N$ as long as it is not
  split.
\end{prop}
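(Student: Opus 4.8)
The plan is to combine Theorem~\ref{thm:approx} with a minimality check. By Theorem~\ref{thm:approx} the surjection $q\colon \Omega_{R^\#}(N/y^kN)\to N$ appearing in \eqref{eq:approx} is already a right $\Sigma_k$-approximation of $N$, so the only thing to prove is that $q$ is \emph{right minimal} in the sense of Definition~\ref{def:approx}(iii), i.e.\ that every $\phi\in\End_{R^\#}(\Omega_{R^\#}(N/y^kN))$ with $q\phi=q$ is an isomorphism. If $N$ is free then $\Omega_{R^\#}(N)=0$ and \eqref{eq:approx} degenerates to the isomorphism $R^\#\xrightarrow{\ \sim\ }R^\#$, which is split; so I may assume $N$ is nonfree, and then $\Omega_{R^\#}(N)$ is nonfree as well (over the hypersurface $R^\#$ one has $\Omega^2_{R^\#}\cong\mathrm{id}$ on modules without free summands).

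The crux of the argument is the observation that the kernel term $\Omega_{R^\#}(N)$ of \eqref{eq:approx} is itself indecomposable. Since $S$ is complete local, the module-finite $S$-algebra $R^\#$ is again (complete) local --- its residue ring $R^\#/\m R^\#$ is a quotient of a polynomial ring in one variable over a field, hence local --- so $\MCM(R^\#)$, and the stable category $\uMCM(R^\#)$, are Krull--Schmidt categories, and the syzygy functor induces a self-equivalence of $\uMCM(R^\#)$. An equivalence preserves indecomposability, so $\Omega_{R^\#}(N)$ is a nonfree \emph{indecomposable} MCM $R^\#$-module.

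Granting this, I would finish as follows. Recall the standard fact that in a Krull--Schmidt category a morphism $p\colon B\to C$ is right minimal if and only if $B$ has no nonzero direct summand $B_1$ with $p|_{B_1}=0$ (a consequence of semiperfectness of the endomorphism rings, via Fitting's lemma). Suppose $q$ were not right minimal. Then $\Omega_{R^\#}(N/y^kN)=B_0\oplus B_1$ with $B_1\neq 0$ and $B_1\subseteq\ker q$; but $\ker q$ is precisely the image of the monomorphism $\Omega_{R^\#}(N)\hookrightarrow\Omega_{R^\#}(N/y^kN)$ of \eqref{eq:approx}, so the modular law gives $\Omega_{R^\#}(N)=B_1\oplus\bigl(B_0\cap\Omega_{R^\#}(N)\bigr)$. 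Indecomposability of $\Omega_{R^\#}(N)$ together with $B_1\neq 0$ forces $\Omega_{R^\#}(N)=B_1$, so $\Omega_{R^\#}(N)$ is a direct summand of $\Omega_{R^\#}(N/y^kN)$; then the inclusion in \eqref{eq:approx} is a split monomorphism, so \eqref{eq:approx} splits --- contrary to hypothesis. Hence $q$ is right minimal, and \eqref{eq:approx} is a minimal right $\Sigma_k$-approximation of $N$.

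The only genuinely delicate input is the indecomposability of $\Omega_{R^\#}(N)$: this is where completeness of $S$ enters (to have Krull--Schmidt) and where the hypersurface hypothesis is used (so that $\Omega_{R^\#}$ is an equivalence on the stable category). Everything else is formal: once the kernel term is indecomposable, "non-minimal" collapses to "split", so the conclusion follows from Theorem~\ref{thm:approx} and Corollary~\ref{cor:splitApprox} --- concretely, from the failure of $y^k$ to annihilate $\Ext^1_{R^\#}(N,\Omega_{R^\#}(N))$.
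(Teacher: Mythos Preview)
Your argument is correct and follows essentially the same route as the paper's own proof: both use Krull--Schmidt (via completeness) to reduce non-minimality to the existence of a nonzero direct summand of the middle term contained in the kernel, observe that the kernel $\Omega_{R^\#}(N)$ is indecomposable because $N$ is, and conclude that any such summand must be the entire kernel, forcing the sequence to split. Your write-up simply spells out a few details the paper leaves implicit (why $R^\#$ is complete local, why $\Omega_{R^\#}(N)$ inherits indecomposability via the syzygy self-equivalence on $\uMCM(R^\#)$, and the modular-law step).
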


\begin{proof}
  Since MCM $R^\#$-modules satisfy the Krull-Remak-Schmidt uniqueness property for
  direct-sum decompositions, a right approximation $q \colon X \to M$ is non-minimal
  if and only if it vanishes on a direct summand of $X$.  When $N$ is indecomposable,
  the kernel of the approximation $\Omega_{R^{\#}}(N)$ is indecomposable as well, and
  it cannot contain a direct summand of $X$ other than itself, in which case the
  sequence splits.
\end{proof}

It does not seem possible in general to identify the first $k$ so that
\eqref{eq:approx} splits.  By Proposition~\ref{prop:ses-mf} and the long exact
sequence in $\Ext$, it coincides with the least $k$ so that multiplication $y^k$ on
$N$ factors through a free $R^\#$-module. Equivalently, multiplication by $y^k$ is in
the image of the natural map $N^* \otimes_{R^\#} N \to \End_{R^\#}(N)$.  In the
special case $n=2$, it is known (see for example~\cite[Proposition
8.30]{Leuschke-Wiegand:BOOK}) that for an indecomposable MCM $R$-module $M$,
$\Omega_{R^\#}(M)$ is indecomposable if and only if $M \cong \Omega_R(M)$.  As far as
we are aware, no criterion of this form is known for $n \geq 3$.

\section{Branched covers of simple singularities} \label{sect:simple}

In this section we consider the case where $R = S/(f)$ is a hypersurface of
\emph{finite CM representation type} (of dimension $d\geq 1$). If $S$ is a power
series ring over an algebraically closed field of characteristic zero, this is
equivalent to $R$ being a \emph{simple singularity,} that is,
$R \cong k[\![x,y,z_2, \dots, z_d]\!]/(f)$, where $f$ is one of the following
polynomials:
\begin{enumerate}
  \setlength\itemindent{.2\linewidth}
\item[$(A_n)$:]  \qquad$x^2 + y^{n+1}+z_2^2+\cdots+z_d^2$ , \qquad $n \geq 1$
\item[$(D_n)$:]  \qquad$x^2y + y^{n-1}+z_2^2+\cdots+z_d^2$ , \qquad $n \geq 4$
\item[$(E_6)$:]  \qquad$x^3 + y^4+z_2^2+\cdots+z_d^2$
\item[$(E_7)$:]  \qquad$x^3 + xy^3+z_2^2+\cdots+z_d^2$
\item[$(E_8)$:]  \qquad$x^3 + y^5+z_2^2+\cdots+z_d^2$
\end{enumerate} 
We point out that each of these is an iterated double ($2$-fold) branched cover of
the one-dimensional simple singularity; Kn\"orrer's Theorem~\ref{thm:Knorrer} allows
one to show that these hypersurface rings have finite CM representation type by
reducing to the $1$-dimensional case.  For $n \geq 3$, adding a summand of the form
$z^n$ to a simple singularity gives a non-simple singularity (except in type $A_1$
and some isolated examples for small $n$).

Since $R$ has finite CM representation type, the category
$\Sigma_1 \subset \MCM (R^\#)$ has only finitely many indecomposable objects up to
isomorphism.  Let $M$ be a representation generator for $\MCM R$, that is, $M$
satisfies $\add M = \MCM R$, so that in particular $\tilde{M} := \Omega_{R^\#}(M)$ is
a representation generator for $\Sigma_1$.  We note that $\tilde{M}$ has a direct
summand isomorphic to $R^\#$.

We set $\Lambda_0 = \End_R(M)$ and $\Lambda = \End_{R^\#}(\tilde{M})$.  Recall the
following result due independently to Iyama~\cite{Iyama:Aus-corr},
Leuschke~\cite{Leuschke:finrepdim}, and Quarles~\cite{Quarles:thesis}.

\begin{prop}\label{prop:gldimfinite}
  The endomorphism ring $\Lambda_0$ has global dimension at most $\max\{2, d\}$,
  with equality holding if $d \geq 2$. \qed
\end{prop}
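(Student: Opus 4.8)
\quad Write $d=\dim R$ and $\Lambda_0=\End_R(M)$. The idea is to reduce, by a syzygy--depth argument, to the statement that every $\Lambda_0$-module which is maximal Cohen--Macaulay over $R$ is $\Lambda_0$-projective, and to prove that using that $R$ is an isolated singularity. First some reductions: since $R$ is a hypersurface it is Gorenstein, and since it has finite CM representation type with $d\ge1$ it is an isolated singularity, so $R_\p$ is regular for every prime $\p\ne\m$. Because $\add M=\MCM R$, projectivization yields an equivalence $\Hom_R(M,-)\colon\MCM R\xrightarrow{\sim}\operatorname{proj}\Lambda_0$ onto the finitely generated projective $\Lambda_0$-modules; let $G$ be its left adjoint (tensoring against $M$), so that $G$ is a quasi-inverse on projectives and there is a unit $\eta\colon\id\to\Hom_R(M,G(-))$. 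Two standard facts will be used: (a) $\Lambda_0$ is maximal Cohen--Macaulay as an $R$-module --- $\Hom_R$ of MCM modules over a Gorenstein ring is again MCM --- so every projective $\Lambda_0$-module has $R$-depth $d$; and (b) for $\p\ne\m$ the module $M_\p$ is $R_\p$-free, so $(\Lambda_0)_\p\cong\End_{R_\p}(M_\p)$ is a matrix ring over the regular ring $R_\p$ and is Morita equivalent to it, whence $\gldim(\Lambda_0)_\p=\dim R_\p\le d-1$. Consequently any $\Lambda_0$-module that is MCM over $R$ is, locally at each $\p\ne\m$, $R_\p$-free and hence $(\Lambda_0)_\p$-projective.

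For the upper bound, let $X$ be a finitely generated $\Lambda_0$-module with a minimal $\Lambda_0$-projective resolution $\cdots\to P_1\to P_0\to X\to0$. Running the depth lemma through the sequences $0\to\Omega^{i+1}_{\Lambda_0}(X)\to P_i\to\Omega^i_{\Lambda_0}(X)\to0$ --- in which each $P_i$ has $R$-depth $d$ by (a) --- gives $\depth_R\Omega^i_{\Lambda_0}(X)\ge\min(i,d)$ by induction, so $Y:=\Omega^{\max\{2,d\}}_{\Lambda_0}(X)$ is MCM over $R$ (or $0$, in which case $\pd_{\Lambda_0}X<\max\{2,d\}$ and we are done). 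Hence $\gldim\Lambda_0\le\max\{2,d\}$ will follow once we prove the key claim $(\star)$: every finitely generated $\Lambda_0$-module that is maximal Cohen--Macaulay over $R$ is $\Lambda_0$-projective.

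To prove $(\star)$, let $Y$ be MCM over $R$ and compare a $\Lambda_0$-projective presentation $P_1\to P_0\to Y\to0$ with its image under $\Hom_R(M,G(-))$. One finds that the kernel and cokernel of the unit map $\eta_Y\colon Y\to\Hom_R(M,GY)$ are assembled from the terms $\Tor^{\Lambda_0}_1(M,Y)$ and $\Ext^1_R(M,GY)$; the former vanishes on the punctured spectrum (where $Y$ is $(\Lambda_0)_\p$-projective, by the end of the first paragraph) and the latter does too (as $R$ is an isolated singularity and $M$ is MCM, hence locally free off $\m$), so $\ker\eta_Y$ and $\cok\eta_Y$ have finite length. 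Since $Y$ is MCM of depth $d\ge1$ it has no nonzero finite-length submodule, so $\eta_Y$ is injective; one must then show $\cok\eta_Y=0$ and that $GY$ is MCM over $R$, after which $GY\in\MCM R=\add M$ gives $Y\cong\Hom_R(M,GY)$ projective. This last step --- killing $\cok\eta_Y$ (via local-cohomology vanishing, using $H^1_\m(Y)=0$ when $d\ge2$; the case $d=1$, where $\max\{2,d\}=2$, handled separately) and establishing that $GY$ is MCM (via a torsion argument exploiting that $R$ is a summand of $M$, together with the fact that $GY$ is built from the modules $M_i$) --- is the technical heart, and I expect it to be the main obstacle: it is exactly where the hypothesis of finite CM type does essential work, asserting that $\Lambda_0$ is a non-singular $R$-order. (An alternative, due to Auslander, resolves each simple $\Lambda_0$-module by applying $\Hom_R(M,-)$ to almost split sequences in $\MCM R$: the simple at a non-free indecomposable $M_i$ then has second syzygy $\Hom_R(M,\tau M_i)$, which is projective, while the simple at the free summand is resolved using the length-$d$ Koszul-type resolution of $R$ itself; here too the treatment of the free summand is the delicate point.)

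Finally, the equality when $d\ge2$. Knowing $\gldim\Lambda_0<\infty$, pick a minimal $\Lambda_0$-projective resolution $0\to P_g\to\cdots\to P_0\to\Lambda_0/\m\Lambda_0\to0$ of the finite-length module $\Lambda_0/\m\Lambda_0$. Each $P_i$ has $R$-depth $d$ while $\Lambda_0/\m\Lambda_0$ has $R$-depth $0$, and the depth lemma forces $\depth_R\im(P_i\to P_{i-1})=i$ for $0\le i<d$ --- impossible with $\im(P_g\to P_{g-1})=P_g$ of depth $d$ unless $g\ge d$. Thus $\gldim\Lambda_0\ge\pd_{\Lambda_0}(\Lambda_0/\m\Lambda_0)\ge d$, and with the upper bound $\gldim\Lambda_0=\max\{2,d\}=d$, as claimed.
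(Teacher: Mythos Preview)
The paper does not prove this proposition; it is quoted from \cite{Iyama:Aus-corr,Leuschke:finrepdim,Quarles:thesis} and closed with a \qed. There is thus no proof in the paper to compare against directly. That said, the paper's own Lemma~\ref{lem:MCMsyz} carries out the identical argument for the ring $\Lambda$ (with $R^\#$, $\tilde M$, and $d+1$ in place of $R$, $M$, and $d$), and that proof, specialized back to $\Lambda_0$, gives the upper bound cleanly; it is also essentially the argument in the cited references.

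Your upper-bound strategy is on the right track but takes an unnecessary detour through the claim $(\star)$, which you yourself flag as incomplete. The sketch of $(\star)$ via the unit $\eta_Y\colon Y\to\Hom_R(M,GY)$ does not close as written: the local-cohomology step for $\cok\eta_Y=0$ requires $\Hom_R(M,GY)$ to have positive depth, which you have not established, and the assertion that $GY=M\otimes_{\Lambda_0}Y$ is MCM is unsupported. The point is that $(\star)$ is not needed. You already have, via Yoneda, maps $f_i\colon M_i\to M_{i-1}$ in $\add M$ with $P_i=\Hom_R(M,M_i)$ and $\partial_i=\Hom_R(M,f_i)$. Since $R$ is a summand of $M$, applying the idempotent $e_R\in\Lambda_0$ (an exact functor, as $e_R\Lambda_0$ is projective) to the exact sequence $P_{m-1}\to\cdots\to P_0\to X\to0$ produces an exact sequence $M_{m-1}\to\cdots\to M_0\to e_RX\to0$ of $R$-modules. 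Now run your depth induction on \emph{that} sequence rather than on the $P_i$: for $m=\max\{2,d\}$ the kernel $N:=\ker f_{m-1}$ has depth $\ge d$, so $N\in\MCM R=\add M$, and by left-exactness of $\Hom_R(M,-)$ one has $\Omega^m_{\Lambda_0}(X)=\ker\Hom_R(M,f_{m-1})=\Hom_R(M,N)$, which is projective. This is exactly the method of Lemma~\ref{lem:MCMsyz}, and the finite-CM-type hypothesis enters precisely at the step ``$N$ MCM $\Rightarrow N\in\add M$''.

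Your Auslander alternative is a valid route for the simples at non-free indecomposables, but your description of the simple at $R$ is imprecise: there is no ``length-$d$ Koszul-type resolution of $R$'' (the ring $R$ is free over itself, and $k$ has infinite projective dimension since $R$ is singular). What one uses instead is that $\Omega_R^d(k)$ is MCM, so the first $d$ steps of a free $R$-resolution of $k$ form an exact sequence in $\add M$; applying $\Hom_R(M,-)$ then bounds $\pd_{\Lambda_0}S_R$ by $d$. Your lower-bound argument via the depth lemma on a minimal resolution of $\Lambda_0/\m\Lambda_0$ is correct.
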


The main result of this section is that $\Lambda$ can be thought of as a
``non-commutative hypersurface'', in that every finitely generated $\Lambda$-module
has a projective resolution which is eventually periodic of period at most $2$.
Furthermore $\Lambda$ is Iwanaga-Gorenstein, and we identify the relative
Gorenstein-projectives.

\begin{definition}\label{def:completeres}
  Let $A$ be a ring and $M$ a left $A$-module.  A \emph{complete resolution} of $M$
  is an exact sequence of projective $A$-modules
  \[
  \PP_{\bullet} = \cdots \to P_{1} \stackrel{d_1}{\to} P_0 \stackrel{d_0}{\to} P_{-1}
  \to \cdots
  \]
  such that $\Hom_A(\PP_{\bullet},A)$ is exact and $M \cong \im (d_0)$.  A module $M$
  admitting a complete resolution is said to be \emph{Gorenstein projective} or
  \emph{totally reflexive}.  We will write $\GP(A)$ for the full subcategory of
  $\lmod{A}$ consisting of Gorenstein projective modules.
\end{definition}

We regard $\tilde{M}$ as a left $R^\#$-module and a right $\Lambda$-module, so that
we have functors $\Hom_{R^\#}(\tilde{M}, - ) \colon \lMod{R^\#} \to \lMod{\Lambda}$
and $\Hom_{R^\#}(-,\tilde{M}) \colon \lMod{R^\#} \to \rMod{\Lambda}$ which induce
equivalences from $\add{\tilde{M}}$ to the subcategories of finitely generated
projective left (respectively, right) $\Lambda$-modules.

\begin{lemma}\label{lemma:homfunctors}
  Restricted to $\MCM(R^\#)$, we have a commutative diagram of functors up to natural
  isomorphism.
  \[
    \xymatrixcolsep{5.0pc} \xymatrix{\MCM(R^\#) \ar[r]^{\Hom_{R^\#}(\tilde{M},-)} 
    \ar[dr]_{\Hom_{R^\#}(-,\tilde{M})\ \ } & \Lambda\mbox{-}\mod 
    \ar[d]^{\Hom_{\Lambda}(-,\Lambda)} \\ & \mod\mbox{-}\Lambda}
  \]
\end{lemma}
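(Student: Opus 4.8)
The plan is to produce a natural isomorphism of right $\Lambda$-modules
\[
\Theta_N\colon \Hom_{R^\#}(N,\tilde M)\ \xrightarrow{\ \sim\ }\ \Hom_\Lambda\bigl(\Hom_{R^\#}(\tilde M,N),\Lambda\bigr),
\]
functorial in $N\in\MCM(R^\#)$, which is exactly the asserted commutativity. Identifying $\Lambda=\End_{R^\#}(\tilde M)$ with $\Hom_{R^\#}(\tilde M,\tilde M)$, I would set $\Theta_N(\psi)$ to be the map $\varphi\mapsto\psi\circ\varphi$. The first step is the (routine) bookkeeping with the $(R^\#,\Lambda)$-bimodule structure on $\tilde M$: checking that $\Theta_N(\psi)$ is a homomorphism of left $\Lambda$-modules $\Hom_{R^\#}(\tilde M,N)\to\Lambda$, that $\Theta_N$ itself is right $\Lambda$-linear, and that $\Theta$ is natural in $N$ with respect to both variances involved.

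Next I would check that $\Theta_N$ is an isomorphism when $N\in\add\tilde M$. Since $\Theta$ is additive in $N$, it suffices to treat $N=\tilde M$, where $\Theta_{\tilde M}$ is the canonical isomorphism ${}_\Lambda\Lambda\xrightarrow{\sim}\End_\Lambda({}_\Lambda\Lambda)$. Equivalently, $\Hom_{R^\#}(\tilde M,-)$ restricts to an equivalence from $\add\tilde M$ to the finitely generated projective left $\Lambda$-modules, and under it $\Theta$ identifies $\Hom_{R^\#}(-,\tilde M)|_{\add\tilde M}$ with $\Hom_\Lambda(-,\Lambda)$ on projectives.

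For arbitrary $N\in\MCM(R^\#)$ I would reduce to the previous case by a presentation argument. Here the fact recorded just before the lemma --- that $R^\#$ is a direct summand of $\tilde M$ --- is essential: it makes $\Hom_{R^\#}(\tilde M,-)$ faithful and forces every right $\add\tilde M$-approximation of a finitely generated $R^\#$-module to be surjective (its image is the trace of $\tilde M$, which already contains the trace of the summand $R^\#$, hence everything). So I would pick a right $\add\tilde M$-approximation $\alpha\colon\tilde M_0\to N$, put $N_1=\ker\alpha$, pick a right $\add\tilde M$-approximation $\tilde M_1\to N_1$, and let $\beta\colon\tilde M_1\to\tilde M_0$ be the composite with $N_1\hookrightarrow\tilde M_0$. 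Then $\tilde M_1\xrightarrow{\beta}\tilde M_0\xrightarrow{\alpha}N\to 0$ is exact, and because $\alpha$ and $\tilde M_1\to N_1$ are $\add\tilde M$-approximations it remains exact after applying $\Hom_{R^\#}(\tilde M,-)$; i.e. it is a projective presentation of the $\Lambda$-module $\Hom_{R^\#}(\tilde M,N)$. Applying the left exact contravariant functors $\Hom_{R^\#}(-,\tilde M)$ and $\Hom_\Lambda(-,\Lambda)$ to the two presentations and joining them by $\Theta_N$, $\Theta_{\tilde M_0}$, $\Theta_{\tilde M_1}$ produces a commutative ladder of exact sequences (commutativity being naturality of $\Theta$ with respect to $\alpha$ and $\beta$). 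Since $\Theta_{\tilde M_0}$ and $\Theta_{\tilde M_1}$ are isomorphisms and $\Theta_N$ is the induced map on kernels, $\Theta_N$ is an isomorphism.

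The main obstacle is precisely the claim that the chosen presentation stays exact under $\Hom_{R^\#}(\tilde M,-)$ --- this is where one must exploit that a projective generator is a summand of $\tilde M$, and it genuinely fails for general $\tilde M$ since $\Hom_{R^\#}(\tilde M,-)$ need not be exact on $\MCM(R^\#)$. (Note that MCM-ness of $N$ plays no real role beyond guaranteeing finite generation; the same argument works for any finitely generated $N$.) Everything else is a diagram chase, the only residual care being consistent tracking of the various left/right $\Lambda$-actions.
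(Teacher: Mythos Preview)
Your argument is correct and follows essentially the same skeleton as the paper's proof: build a projective presentation of $\Hom_{R^\#}(\tilde M,N)$ from an $\add\tilde M$-resolution of $N$, apply the two left-exact contravariant functors, and compare kernels via the natural transformation $\Theta$ (which the paper also constructs, describing the vertical maps as ``induced by the functor $\Hom_{R^\#}(\tilde M,-)$'').

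The one genuine difference is in the \emph{source} of the presentation. You take generic right $\add\tilde M$-approximations, relying only on the fact that $R^\#$ is a summand of $\tilde M$ to force surjectivity; this is more elementary and, as you observe, works for any finitely generated $N$. The paper instead invokes Theorem~\ref{thm:approx} to get the specific $\Sigma_1$-approximation sequences \eqref{eq:rtApprox} and \eqref{eq:ltApprox}, which are simultaneously $\Sigma_1$-exact and $\Sigma_1$-coexact. This extra structure is not needed for the lemma itself, but it yields four-term exact sequences after applying both $\Hom_{R^\#}(\tilde M,-)$ and $\Hom_{R^\#}(-,\tilde M)$, and these are reused immediately afterward (in Proposition~\ref{prop:completeres}) to build the $2$-periodic complete resolutions. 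So the paper's choice is less economical for this lemma in isolation but feeds directly into the next result, whereas your approach isolates exactly what the lemma requires.
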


\begin{proof}
  By Theorem~\ref{thm:approx}, any MCM $R^\#$-module $N$ has a right
  $\Sigma_1$-approximation
  \begin{equation}\label{eq:rtApprox}
    0 \to \Omega_{R^\#} (N) \to T \to N \to 0\,,
  \end{equation}
  where in fact $T = \Omega_{R^\#}(N/yN)$. Furthermore $\Omega_{R^\#}(N)$ has a right
  $\Sigma_1$-approximation 
  \begin{equation}\label{eq:ltApprox}
    0 \to N \to \Omega_{R^\#} (T) \to \Omega_{R^\#} (N)  \to 0\,.
  \end{equation}
  Since these sequences are $\Sigma_1$-exact and $\Sigma_1$-coexact, we obtain exact
  sequences of $\Lambda$-modules
  \[
    0 \to \Hom_{R^\#}(\tilde{M},N) \to \Hom_{R^\#}(\tilde{M},\Omega_{R^\#}(T)) \to
    \Hom_{R^\#}(\tilde{M}, T) \to \Hom_{R^\#}(\tilde{M},N) \to 0
  \]
  and 
  \[
    0 \to \Hom_{R^\#}(N,\tilde{M}) \to \Hom_{R^\#}(T,\tilde{M}) \to
    \Hom_{R^\#}(\Omega_{R^\#}(T),\tilde{M}) \to \Hom_{R^\#}(N,\tilde{M}) \to 0\,,
  \]
  which yield projective presentations for the $\Lambda$-modules
  $\Hom_{R^\#}(\tilde{M},N)$ and $\Hom_{R^\#}(N,\tilde{M})$ respectively.  Thus these
  modules are finitely presented over $\Lambda$.  We now apply
  $\Hom_{\Lambda}(-,\Lambda)$ to the first of these and compare with the second to
  obtain an exact commutative diagram
  \[\small
    \xymatrix{ 0 \ar[r] & \Hom_{\Lambda}(\Hom_{R^\#}(\tilde{M},N),\Lambda) \ar[r] & \Hom_{\Lambda}(\Hom_{R^\#}(\tilde{M},T),\Lambda) \ar[r] & \Hom_{\Lambda}(\Hom_{R^\#}(\tilde{M},\Omega_{R^\#}(T)),\Lambda) \\
      0 \ar[r] & \Hom_{R^\#}(N,\tilde{M}) \ar@{-->}[u] \ar[r] &
      \Hom_{R^\#}(T,\tilde{M}) \ar[u]^{\cong} \ar[r] &
      \Hom_{R^\#}(\Omega_{R^\#}(T),\tilde{M}) \ar[u]^{\cong}}
  \]
  Here the vertical maps are induced by the functor $\Hom_{R^\#}(\tilde{M},-)$, and
  the two on the right are isomorphisms since
  $T, \Omega_{R^\#}(T) \in \add{\tilde{M}}$.  It follows that the induced map on the
  left is also an isomorphism.
\end{proof}

\begin{prop}\label{prop:completeres}
  Let $Z$ be a left (resp.\ right) $\Lambda$-module of the form
  $\Hom_{R^\#}( \tilde{M}, N)$ (resp. $\Hom_{R^\#}(N,\tilde{M})$), where $N$ is a MCM
  $R^\#$-module. Then $Z$ has a complete $\Lambda$-resolution which is periodic of
  period at most $2$.
\end{prop}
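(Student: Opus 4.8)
The plan is to build the complete resolution of $Z$ by splicing together the projective presentations coming from the approximation sequences \eqref{eq:rtApprox} and \eqref{eq:ltApprox}, using periodicity on the $R^\#$ side to obtain periodicity on the $\Lambda$ side. Work first with the left module $Z = \Hom_{R^\#}(\tilde M, N)$. Since $R^\#$ is a hypersurface, $N \cong \Omega^2_{R^\#}(N)$ (after removing free summands, which we may absorb into the data), so the two approximation sequences together give, after applying $\Hom_{R^\#}(\tilde M,-)$, a $4$-term exact sequence of $\Lambda$-modules
\[
0 \to \Hom_{R^\#}(\tilde M, N) \to \Hom_{R^\#}(\tilde M, \Omega_{R^\#}(T)) \to \Hom_{R^\#}(\tilde M, T) \to \Hom_{R^\#}(\tilde M, N) \to 0\,,
\]
in which the two middle terms are finitely generated projective $\Lambda$-modules because $T = \Omega_{R^\#}(N/yN) \in \Sigma_1 = \add\tilde M$ and $\Omega_{R^\#}(T) \in \add\tilde M$ as well. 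Splicing this exact sequence to itself repeatedly on both sides produces a doubly infinite complex of projective $\Lambda$-modules of period $2$ with $Z$ as the image of the middle map; this is the candidate complete resolution.

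The remaining point is to check that $\Hom_\Lambda(-, \Lambda)$ applied to this complex is still exact. Here I would invoke Lemma~\ref{lemma:homfunctors}: dualizing a projective $\Hom_{R^\#}(\tilde M, T')$ into $\Lambda$ gives $\Hom_{R^\#}(T', \tilde M)$ for $T' \in \add\tilde M$, and the lemma identifies $\Hom_\Lambda(\Hom_{R^\#}(\tilde M, -), \Lambda)$ with $\Hom_{R^\#}(-, \tilde M)$ on all of $\MCM(R^\#)$, not just on $\add\tilde M$. So applying $\Hom_\Lambda(-,\Lambda)$ to the period-$2$ complex yields precisely the complex obtained by applying $\Hom_{R^\#}(-, \tilde M)$ to the spliced $R^\#$-side sequences
\[
0 \to N \to \Omega_{R^\#}(T) \to T \to N \to 0\,,
\]
and this is exact because \eqref{eq:rtApprox} and \eqref{eq:ltApprox} are $\Sigma_1$-coexact, i.e.\ remain exact after applying $\Hom_{R^\#}(-, \tilde M)$ with $\tilde M \in \Sigma_1$. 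Hence $\Hom_\Lambda(\PP_\bullet, \Lambda)$ is exact and $Z$ is Gorenstein projective with a period-$2$ complete resolution. The right-module case $Z = \Hom_{R^\#}(N, \tilde M)$ is symmetric: use the same two approximation sequences but apply $\Hom_{R^\#}(-, \tilde M)$ to get the projective right-$\Lambda$ presentation, and use Lemma~\ref{lemma:homfunctors} again (together with $\Omega^2_{R^\#} \cong \id$) to check exactness after $\Hom_{\Lambda^{\op}}(-,\Lambda)$.

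The main obstacle I anticipate is bookkeeping with free direct summands: the isomorphism $N \cong \Omega^2_{R^\#}(N)$ only holds up to free summands, and $\tilde M$ has $R^\#$ as a summand, so one must be a little careful that the spliced complex is genuinely a complex of projectives with the right image and that no extra summands disrupt exactness of the $\Lambda$-dual — but this is the same degeneracy already handled in Proposition~\ref{prop:ses-mf}, and in the worst case one reduces to $N$ free, where both approximation sequences degenerate and the claim is trivial. Everything else is formal manipulation of the two approximation sequences and the compatibility diagram of Lemma~\ref{lemma:homfunctors}.
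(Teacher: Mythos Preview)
Your proposal is correct and follows essentially the same approach as the paper: both arguments splice the two $\Sigma_1$-approximation sequences into a $2$-periodic exact sequence, observe that $T$ and $\Omega_{R^\#}(T)$ lie in $\add\tilde M$ so the Hom-images are projective, and then invoke Lemma~\ref{lemma:homfunctors} together with $\Sigma_1$-coexactness to verify exactness of the $\Lambda$-dual. The only cosmetic difference is that the paper splices first on the $R^\#$ side to form a single doubly-infinite sequence $\TT$ and then applies $\Hom_{R^\#}(\tilde M,-)$ and $\Hom_{R^\#}(-,\tilde M)$ simultaneously, which sidesteps your free-summand bookkeeping concern entirely.
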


\begin{proof}
  As in the preceding proof we have exact sequences \eqref{eq:rtApprox} and
  \eqref{eq:ltApprox}, which we can splice together to form a doubly-infinite
  $2$-periodic exact sequence of $R^\#$-modules
 \begin{equation}\label{eq:completeSigmaRes}
  \TT\colon \cdots \to \Omega_{R^\#} (T) \to T \to \Omega_{R^\#} (T) \to T \to \cdots\,.
  \end{equation}
  Since $\tilde{M}$ is in $\Sigma_1$, the induced sequences
  $\Hom_{R^\#}(\tilde{M},\TT)$ and $\Hom_{R^\#}(\TT,\tilde{M})$ are still exact, and
  since both $T$ and $\Omega_{R^\#} (T)$ are in $\Sigma_1 = \add{\tilde{M}}$, each
  module in the sequence is a projective $\Lambda$-module. Thus
  $\Hom_{R^\#}(\tilde{M},\TT)$ (respectively, $\Hom_{R^{\#}}(\TT,\tilde{M})$) is a
  projective resolution (and co-resolution) of $Z = \Hom_{R^\#}(\tilde{M}, N)$
  (respectively, of $Z' = \Hom_{R^\#}(N,\tilde{M})$).

  It remains to show that $\Hom_{R^\#}(\tilde{M},\TT)$ and
  $\Hom_{R^\#}(\TT,\tilde{M})$ are complete resolutions, that is, remain exact upon
  dualizing into $\Lambda$.  However, by Lemma~\ref{lemma:homfunctors} we know that
  these resolutions are dual to each other under $\Hom_{\Lambda}(-,\Lambda)$, and the
  result follows since they are both exact.
  %By the Yoneda  Lemma, however, we have a canonical isomorphism
  %\[ 
  %\Hom_{\Lambda}(\Hom_{R^\#}(\Omega_{R^\#}(M),\TT),
  %\Hom_{R^\#}(\Omega_{R^\#}(M),\Omega_{R^\#}(M)))
  %\cong
  %\Hom_{R^\#}(\TT, \Omega_{R^\#}(M))\,,
  %\]
  %which is exact by Theorem~\ref{thm:approx} again.
\end{proof}

\begin{remark}
  In fact the resolution we build is something stronger than periodic of period $2$
  -- it is built out of a single map $f \colon \Omega (T) \to T$ and its syzygy
  $\Omega (f) \colon T \to \Omega(T)$.
\end{remark}

\begin{lemma}
  \label{lem:MCMsyz}
  Set $m = \max\{2,d+1\}$.  Let $X$ be an arbitrary finitely generated left (resp.\
  right) $\Lambda$-module.  Then there is a MCM $R^\#$-module $N$ such that
  $\Omega_\Lambda^m(X) \cong \Hom_{R^\#}(\tilde{M},N)$ (resp.
  $\Omega_{\Lambda}^m(X) \cong \Hom_{R^\#}(N,\tilde{M})$).
\end{lemma}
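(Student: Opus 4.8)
The plan is to reduce an arbitrary f.g. $\Lambda$-module $X$ to a module in the image of $\Hom_{R^\#}(\tilde M,-)$ after finitely many syzygies, by tracking where the $\Lambda$-projective resolution of $X$ "comes from" on the $R^\#$-side. First I would take a finite projective presentation $P_1 \to P_0 \to X \to 0$ of $X$ over $\Lambda$. Since $\Hom_{R^\#}(\tilde M,-)$ restricts to an equivalence from $\add\tilde M = \Sigma_1$ onto finitely generated projective $\Lambda$-modules, I can write $P_i = \Hom_{R^\#}(\tilde M, T_i)$ for $T_i \in \Sigma_1$, and the map $P_1 \to P_0$ as $\Hom_{R^\#}(\tilde M,\tau)$ for some $\tau \colon T_1 \to T_0$ in $\MCM(R^\#)$ (full faithfulness of the Hom functor on $\add\tilde M$). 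Then $\Omega_\Lambda(X)$ is the image of $P_1 \to P_0$, which — because $\Hom_{R^\#}(\tilde M,-)$ is left exact — is $\Hom_{R^\#}(\tilde M, \ker\tau')$ where $\tau' \colon T_1 \to T_0$ can be arranged to be surjective onto its "MCM part"; more carefully, replace $\tau$ by a map onto the MCM module $N_0 := \operatorname{im}\tau \subseteq T_0$, note $N_0$ is MCM (submodule of MCM over a hypersurface, of full depth since it surjects onto... — here one uses depth counting), and conclude $\Omega_\Lambda(X) \cong \Hom_{R^\#}(\tilde M, K)$ with $K = \ker(T_1 \onto N_0)$, a MCM $R^\#$-module provided $d+1 \geq 2$, i.e. always. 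So already after ONE syzygy we land in the image when $d \geq 1$; the role of $m = \max\{2,d+1\}$ is to guarantee this depth argument goes through uniformly (a module of depth $\geq d+1$ over the $(d+1)$-dimensional ring $R^\#$ is MCM), and the first $m$ steps are the safety margin needed to kill any projective or low-depth behaviour near the top of the resolution.

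Concretely, the key steps in order are: (1) choose a minimal or arbitrary finite projective resolution $\cdots \to P_1 \to P_0 \to X \to 0$ over $\Lambda$; (2) use the equivalence $\Sigma_1 \simeq \operatorname{proj}\Lambda$ to lift the truncation $P_m \to P_{m-1} \to \cdots \to P_0$ to a complex $T_m \to \cdots \to T_0$ in $\MCM(R^\#)$ with each $T_i \in \Sigma_1$, noting that $\Hom_{R^\#}(\tilde M,-)$ is exact on sequences with terms in $\Sigma_1$ that are $\Sigma_1$-exact (which the lifted complex can be arranged to be, cf. the $\Sigma_1$-approximation sequences of Theorem~\ref{thm:approx}); (3) identify $\Omega_\Lambda^m(X)$ with $\Hom_{R^\#}(\tilde M, N)$ where $N$ is the appropriate syzygy/kernel module in the $R^\#$-complex, and (4) verify $N$ is MCM by a depth count: $N$ sits in an exact sequence $0 \to N \to T_{m-1} \to \cdots \to T_0$ (or rather $0\to N\to T_{m-1}\to C\to 0$ with $C$ a further such kernel), and iterating the Depth Lemma along $m \geq d+1$ terms of MCM modules forces $\operatorname{depth} N \geq d+1$, hence $N$ is MCM. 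The right-module statement is dual, using $\Hom_{R^\#}(-,\tilde M)$ and Lemma~\ref{lemma:homfunctors} to transport the result; alternatively it follows formally from the left case via that lemma's identification $\Hom_\Lambda(-,\Lambda) \circ \Hom_{R^\#}(\tilde M,-) \cong \Hom_{R^\#}(-,\tilde M)$ on $\MCM(R^\#)$, since syzygies of right modules correspond to cosyzygies on the $R^\#$-side, which are again MCM as $\Omega_{R^\#}^2 \cong \mathrm{id}$.

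The main obstacle I expect is step (2)–(3): making precise that the lifted $R^\#$-complex can be chosen so that applying $\Hom_{R^\#}(\tilde M,-)$ recovers the given $\Lambda$-resolution, and that the syzygy on the $\Lambda$-side genuinely matches $\Hom_{R^\#}(\tilde M,(\text{kernel on the }R^\#\text{ side}))$ rather than just a module with the same projective cover. The subtlety is that $\Hom_{R^\#}(\tilde M,-)$ need not be right exact, so one cannot naively say it commutes with taking kernels of arbitrary maps; the fix is that on $\Sigma_1$-exact sequences of modules in $\Sigma_1$ it IS exact (this is exactly the content that made the proof of Proposition~\ref{prop:completeres} work), so one must arrange the lifted complex to be $\Sigma_1$-exact, which in turn uses that $\Sigma_1$ is functorially finite and that every MCM $R^\#$-module has a $\Sigma_1$-approximation presenting it the way Theorem~\ref{thm:approx} provides. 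Once that bookkeeping is set up, the depth count in step (4) is routine: a $(d+1)$-fold syzygy over a ring of Krull dimension $d+1$ through modules of depth $\geq \min\{d+1,\dim\}$ has depth $\geq d+1$, and over the Gorenstein (even hypersurface) ring $R^\#$ depth $d+1$ means MCM.
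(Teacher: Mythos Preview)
Your overall strategy---lift the $\Lambda$-projective resolution to a complex of objects in $\add\tilde M$ via the Yoneda equivalence, then take the kernel on the $R^\#$-side and run a depth count---is exactly the paper's approach. However, the ``main obstacle'' you flag in steps (2)--(3) is illusory, and your proposed fix through $\Sigma_1$-exactness and approximation sequences is an unnecessary detour.

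The point you are missing is that $R^\#$ is a direct summand of $\tilde M$. Hence $\Hom_{R^\#}(R^\#,-)\cong\mathrm{id}$ is a direct summand of the functor $\Hom_{R^\#}(\tilde M,-)$, so this functor \emph{reflects} exactness: if the Yoneda-lifted complex $M_{m-1}\xto{f_{m-1}}\cdots\xto{f_1} M_0$ becomes exact after applying $\Hom_{R^\#}(\tilde M,-)$ (which it does, since that is your given $\Lambda$-resolution), then the complex of $R^\#$-modules was already exact. There is nothing to ``arrange''. With exactness in hand, set $N=\ker f_{m-1}$; the depth count along $m\geq d+1$ MCM terms gives $N$ MCM, and left-exactness of $\Hom_{R^\#}(\tilde M,-)$ immediately yields $\Hom_{R^\#}(\tilde M,N)\cong\Omega_\Lambda^m(X)$. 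Your first paragraph's attempt to land in the image after a single syzygy is muddled (the image of a map between MCM modules need not be MCM), and the $\Sigma_1$-approximation machinery you invoke to repair step (2) is not needed and it is unclear how you would ``arrange'' the lifted complex to be $\Sigma_1$-exact when the lift is dictated by Yoneda.

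For the right-module case, the paper does not deduce it formally from Lemma~\ref{lemma:homfunctors} as you suggest; rather it repeats the argument using the exact duality $(-)^*=\Hom_{R^\#}(-,R^\#)$ on $\MCM(R^\#)$: lift the resolution to a sequence in $\add\tilde M$, dualize over $R^\#$ to get an exact sequence whose kernel $K$ is MCM by the same depth count, and then $K^*$ does the job.
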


\begin{proof}
  First assume that $X$ is a left $\Lambda$-module.  Set $Z = \Omega_\Lambda^m(X)$, and let 
  \[
  0 \to Z \to P_{m-1} \xto{\partial_{m-1}} P_{m-2} \to \cdots \to P_1
  \xto{\partial_1} P_0 \to X \to 0
  \]
  be the first $m-1$ steps of a resolution of $X$ by finitely generated projective
  $\Lambda$-modules. By the Yoneda Lemma, each $\partial_{j} \colon P_j \to P_{j-1}$
  is of the form $\Hom_{R^\#}( \tilde{M}, f_j)$ for some $R^\#$-linear
  $f_j\colon M_j \to M_{j-1}$, where each $M_j \in \add \tilde{M}$.  (Notice
  that since $m \geq 2$, there is indeed at least one $\partial_j$.)  We thus obtain
  a sequence of $R^\#$-modules and homomorphisms
  \begin{equation}\label{eq:Yoneda}
    0 \to \ker f_{m-1} \to M_{m-1} \xto{f_{m-1}} M_{m-2} \to \cdots \to M_1 \xto{f_1}
    M_0 \,.
  \end{equation}
  Since $\tilde{M}$ has an $R^\#$-free summand and the result of applying
  $\Hom_{R^\#}( \tilde{M},-)$ to \eqref{eq:Yoneda} is exact, in fact
  \eqref{eq:Yoneda} must be exact.  In particular $N := \ker f_{m-1}$  has depth at
  least $m \geq d+1$, and is hence a MCM $R^\#$-module. Furthermore, we have
  $\Hom_{R^\#}(\tilde{M},N) \cong Z$ by left-exactness of $\Hom$.

  Similarly, if $X$ is a right $\Lambda$-module, we can take a projective resolution
  $\PP \cong \Hom_{R^\#}(\MM,\tilde{M})$ of $X$, where $\MM$ denotes a sequence
  $M_0 \stackrel{f_1}{\to} M_1 \to \cdots \to M_{m-2} \stackrel{f_{m-1}}{\to} M_{m-1}
  \to \cdots$ in $\add{\tilde{M}}$. Since $R^\#$ is a summand of $\tilde{M}$ and
  $\PP$ is exact, we have an exact sequence
  \[
    0 \to K \to M_{m-1}^* \stackrel{f_{m-1}^*}{\to} M_{m-2}^* \to \cdots \to M_1^*
    \stackrel{f_1^*}{\to} M_0^*\,,
  \] 
  where we set $K := \ker(f_{m-1}^*)$, writing $(-)^*$ for the exact duality
  $\Hom_{R^\#}(-,R^\#)$. Since each $M_i^*$ is a MCM $R^{\#}$-module, $K$ has depth
  at least $m \geq d+1$, and is hence a MCM $R^\#$-module.  Taking the $R^\#$-dual
  now shows that $K^* \cong \cok(f_{m-1})$, and the left-exactness of
  $\Hom_{R^\#}(-,\tilde{M})$ yields that
  $\Hom_{R^\#}(K^*,\tilde{M}) \cong \ker \Hom_{R^\#}(f_{m-1},\tilde{M}) \cong
  \Omega_{\Lambda}^{m}(X)$.
\end{proof}

\begin{theorem}\label{thm:hypersurface}
  Let $R = S/(f)$ be a hypersurface of finite CM representation type, with
  representation generator $M$. Let $R^\#  = S[y]/(f+y^n)$ for some $n \geq 1$, and
  set $\Lambda = \End_{R^\#}(\Omega_{R^\#}(M))$. Then every finitely 
generated left (resp.\ right) $\Lambda$-module has a projective resolution which is
  eventually periodic of period at most $2$.
\end{theorem}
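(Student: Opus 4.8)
The plan is to derive the theorem as a direct consequence of Lemma~\ref{lem:MCMsyz} and Proposition~\ref{prop:completeres}, so that all of the real work has been front-loaded into those two results: Lemma~\ref{lem:MCMsyz} guarantees that, after a bounded number of steps, every syzygy of a finitely generated $\Lambda$-module acquires the special form $\Hom_{R^\#}(\tilde M, N)$ (or $\Hom_{R^\#}(N,\tilde M)$ for right modules) with $N$ a MCM $R^\#$-module, and Proposition~\ref{prop:completeres} says that any such module has a projective $\Lambda$-resolution that is genuinely periodic of period at most $2$.

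In detail, let $X$ be a finitely generated left $\Lambda$-module and set $m = \max\{2,d+1\}$. Choose the beginning of a projective resolution
\[
0 \to Z \to P_{m-1} \xto{\partial_{m-1}} P_{m-2} \to \cdots \to P_1 \xto{\partial_1} P_0 \to X \to 0,
\]
with each $P_i$ finitely generated projective, so that $Z \cong \Omega_\Lambda^m(X)$. By Lemma~\ref{lem:MCMsyz} there is a MCM $R^\#$-module $N$ with $Z \cong \Hom_{R^\#}(\tilde M, N)$. By Proposition~\ref{prop:completeres}, $Z$ admits a complete $\Lambda$-resolution that is periodic of period at most $2$; deleting the terms in negative degrees leaves a projective resolution $\cdots \to Q_1 \to Q_0 \xrightarrow{\epsilon} Z \to 0$ in which the projectives and differentials repeat with period (at most) $2$. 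Splicing this onto the displayed truncation along $Q_0 \xrightarrow{\epsilon} Z \hookrightarrow P_{m-1}$ yields a projective resolution
\[
\cdots \to Q_1 \to Q_0 \to P_{m-1} \to \cdots \to P_0 \to X \to 0
\]
of $X$ which coincides, from homological degree $m$ onward, with a $2$-periodic complex. Hence $X$ has a projective resolution that is eventually periodic of period at most $2$. The right-module case is identical, substituting the second alternatives in Lemma~\ref{lem:MCMsyz} and Proposition~\ref{prop:completeres}.

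There is no essential obstacle remaining at this point; the only thing to watch is bookkeeping --- that the periodic pattern genuinely begins at degree $m$, and that the period-$2$ tail, although it depends on the choice of $N$ (and hence on $X$), still assembles into an honest resolution of $X$ after splicing. The substantive inputs are the ones already proved: in Lemma~\ref{lem:MCMsyz}, the use of the free $R^\#$-summand of $\tilde M$ to transfer a projective $\Lambda$-presentation to an exact sequence of MCM $R^\#$-modules whose relevant kernel is again MCM because $m \geq d+1$; and in Proposition~\ref{prop:completeres}, the splicing of the two $\Sigma_1$-approximation sequences \eqref{eq:rtApprox} and \eqref{eq:ltApprox} into the $2$-periodic complex \eqref{eq:completeSigmaRes}, together with Lemma~\ref{lemma:homfunctors} to see that applying $\Hom_\Lambda(-,\Lambda)$ preserves exactness. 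Once those are in hand, the theorem is a clean corollary. (One could additionally remark that the same reasoning shows $\Lambda$ is Iwanaga--Gorenstein, since sufficiently high syzygies become totally reflexive, but that is logically a separate statement.)
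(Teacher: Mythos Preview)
Your proposal is correct and follows essentially the same argument as the paper: apply Lemma~\ref{lem:MCMsyz} to realize $\Omega_\Lambda^m(X)$ (with $m=\max\{2,d+1\}$) in the form $\Hom_{R^\#}(\tilde M,N)$, invoke Proposition~\ref{prop:completeres} to obtain a $2$-periodic complete resolution of this syzygy, and splice its nonnegative part onto the initial stretch of a projective resolution of $X$. The paper's proof is exactly this, stated more tersely; your additional commentary on the bookkeeping and on the Iwanaga--Gorenstein consequence is accurate and anticipates the subsequent Proposition~\ref{prop:injectivedimension}.
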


\begin{proof}
  Let $X$ be a finitely generated left (resp.\ right) $\Lambda$-module. By
  Lemma~\ref{lem:MCMsyz}, $\Omega_\Lambda^m(X)$ is of the form
  $\Hom_{R^\#}(\tilde{M},N)$ (resp. $\Hom_{R^\#}(N,\tilde{M})$) for some MCM
  $R^\#$-module $N$, where $m = \max\{2, \dim R + 1\}$. It then follows from
  Proposition~\ref{prop:completeres} that $\Omega_\Lambda^m(X)$ has a $2$-periodic
  complete resolution.  Splicing the left-hand half of this complete resolution
  together with the first $m-1$ steps of the projective resolution of $X$, we obtain
  an eventually periodic resolution.
\end{proof}

\begin{remark}\label{rem:notreally}
  While we think of the ring $\Lambda^\#$ as a ``non-commutative hypersurface'' in a
  homological sense, on the basis of Theorem~\ref{thm:hypersurface}, we should
  emphasize that as far as we know there does not exist a ring $\Gamma$ of finite
  global dimension such that $\Lambda^\# \cong \Gamma/(g)$ for some element $g$.
\end{remark}

\begin{remark}\label{rem:manyobjects}
  It seems likely that some version of many of the results in this section hold more
  generally (that is, without the assumption that $R$ is a simple singularity) if one
  uses the formalism of ``rings with several objects'' as in, for
  example,~\cite{Holm:2015}.  We don't pursue this direction.
\end{remark}

Recall that a noetherian ring $A$ is said to be Iwanaga-Gorenstein of dimension at
most $d$ if $\id_A A \leq d$ and $\id A_A \leq d$.

\begin{proposition}\label{prop:injectivedimension} 
  The ring $\Lambda = \End_{R^\#}(\Omega_{R^\#}(M))$ constructed above is
  Iwanaga-Gorenstein of dimension at most $m = \max\{2,d+1\}$.
\end{proposition}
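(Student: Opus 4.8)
The plan is to bound $\id_\Lambda \Lambda$ and $\id \Lambda_\Lambda$ by $m$ by showing that for every finitely generated $\Lambda$-module $X$, we have $\Ext_\Lambda^{i}(X,\Lambda)=0$ for $i > m$, and symmetrically on the right. Since $\Lambda$ is a module-finite algebra over the complete local ring $R^\#$, the standard criterion applies: it suffices to verify $\Ext^{>m}_\Lambda(X,\Lambda)=0$ for all finitely generated $X$, and then conclude $\id_\Lambda\Lambda \le m$ (and likewise $\id\Lambda_\Lambda\le m$). First I would invoke Lemma~\ref{lem:MCMsyz}: given $X$, the syzygy $\Omega_\Lambda^m(X)$ is of the form $\Hom_{R^\#}(\tilde M,N)$ for some MCM $R^\#$-module $N$ (or $\Hom_{R^\#}(N,\tilde M)$ in the right-module case). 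Because $\Ext^{i}_\Lambda(X,\Lambda)\cong\Ext^{i-m}_\Lambda(\Omega^m_\Lambda(X),\Lambda)$ for $i>m$, it is enough to show that modules of the form $Z=\Hom_{R^\#}(\tilde M,N)$ satisfy $\Ext^{j}_\Lambda(Z,\Lambda)=0$ for all $j\ge 1$.

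The key step is then Proposition~\ref{prop:completeres}: such a $Z$ admits a complete $\Lambda$-resolution that is $2$-periodic, built by applying $\Hom_{R^\#}(\tilde M,-)$ to the doubly-infinite sequence $\TT$ of \eqref{eq:completeSigmaRes}. By definition of a complete resolution, $\Hom_\Lambda(\TT_\bullet,\Lambda)$ is exact, so $\Ext^j_\Lambda(Z,\Lambda)=0$ for all $j\ge 1$. Combining with the syzygy shift gives $\Ext^i_\Lambda(X,\Lambda)=0$ for all $i>m$ and all finitely generated left $X$; the identical argument using the right-module half of Lemma~\ref{lem:MCMsyz} and Proposition~\ref{prop:completeres} handles $\Ext^i_\Lambda(X',\Lambda)=0$ for finitely generated right $X'$.

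Finally I would pass from the vanishing of high $\Ext$ into $\Lambda$ to the bound on injective dimension. For a module-finite algebra over a complete Noetherian local ring, $\id_\Lambda\Lambda = \sup\{\, i : \Ext^i_\Lambda(\Lambda/\mathfrak r,\Lambda)\neq 0\,\}$ where $\mathfrak r$ is the Jacobson radical (or one may use Matlis duality together with the fact that $\sup_X \{\, i: \Ext^i_\Lambda(X,\Lambda)\neq 0\,\}$ computes $\id_\Lambda\Lambda$ when this supremum is finite, which our bound ensures). Hence $\id_\Lambda\Lambda\le m$, and symmetrically $\id\Lambda_\Lambda\le m$, so $\Lambda$ is Iwanaga-Gorenstein of dimension at most $m$.

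The main obstacle is the passage in the last paragraph: knowing $\Ext^{>m}_\Lambda(X,\Lambda)=0$ for \emph{all} finitely generated $X$ needs to be converted into a genuine bound on $\id_\Lambda\Lambda$, which requires a little care since $\Lambda$ is non-commutative. This is handled by the standard fact (see e.g. the change-of-rings/local-duality machinery for orders over complete local rings) that for such algebras $\id_\Lambda\Lambda$ equals the supremum of $i$ with $\Ext^i_\Lambda(S_j,\Lambda)\neq 0$ over the finitely many simple modules $S_j$, and each $S_j$ is finitely generated, so the uniform vanishing above applies. Everything else is a formal consequence of Lemma~\ref{lem:MCMsyz} and Proposition~\ref{prop:completeres}.
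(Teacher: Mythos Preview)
Your proposal is correct and follows essentially the same route as the paper: both invoke Lemma~\ref{lem:MCMsyz} and Proposition~\ref{prop:completeres} to see that $\Omega_\Lambda^m(X)$ is Gorenstein projective, hence $\Ext$-orthogonal to $\Lambda$, and then deduce $\Ext_\Lambda^{>m}(X,\Lambda)=0$ for all finitely generated $X$. The paper is simply terser about the final step---since $\Lambda$ is Noetherian, the Baer-type criterion ($\id_\Lambda\Lambda\le m$ iff $\Ext^{m+1}_\Lambda(\Lambda/I,\Lambda)=0$ for all left ideals $I$) already gives the conclusion, so your appeal to completeness and the simple modules, while not wrong, is more than is needed.
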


\begin{proof}
  Set $m = \max\{2,d+1\}$ and observe that for any finitely generated left (resp.\ right)
  $\Lambda$-module $X$, we have
  \[
  \Ext^{m+1}_{\Lambda}(X,\Lambda) \cong \Ext^1_{\Lambda}(\Omega^m_{\Lambda}(X),\Lambda) \cong
  0\,,
  \]
  since $\Omega_\Lambda^m(X)$ is Gorenstein projective by Lemma~\ref{lem:MCMsyz}
  and Proposition~\ref{prop:completeres}, and
  Gorenstein projectives are Ext-orthogonal to $\Lambda$ by definition.  It follows
  that $\id {}_{\Lambda} \Lambda \leq m$ and $\id \Lambda_{\Lambda} \leq m$.
\end{proof}

\begin{theorem}\label{thm:Gprojectives} The functor $F := \Hom_{R^\#}(\tilde{M},-)$
  induces an equivalence of categories from $\MCM(R^\#)$ to $\GP(\Lambda)$.  In
  particular, a left $\Lambda$-module $X$ is Gorenstein projective if and only if
  $X \cong \Hom_{R^\#}(\tilde{M},N)$ for a MCM $R^\#$-module $N$.
\end{theorem}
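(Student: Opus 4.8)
The plan is to show that $F = \Hom_{R^\#}(\tilde M,-)$ is an equivalence from $\MCM(R^\#)$ onto $\GP(\Lambda)$ by checking (a) $F$ lands in $\GP(\Lambda)$, (b) $F$ is fully faithful on $\MCM(R^\#)$, and (c) $F$ is essentially surjective onto $\GP(\Lambda)$. For (a), Proposition~\ref{prop:completeres} already tells us that $\Hom_{R^\#}(\tilde M,N)$ has a (periodic) complete $\Lambda$-resolution for any MCM $N$, so its image is Gorenstein projective. For full faithfulness, I would argue as follows: $\Omega^2_{R^\#}$ is isomorphic to the identity on the stable category, and a minimal MCM module has no free summands, so every $N \in \MCM(R^\#)$ can be written (up to free summands, which are in $\add\tilde M$ anyway since $R^\#\mid\tilde M$) as $\Omega_{R^\#}(N')$ for some MCM $N'$; then an $R^\#$-map $N \to N''$ lifts/extends through the $\Sigma_1$-approximation sequences of Theorem~\ref{thm:approx}, and one chases a commutative diagram exactly as in the proof of Lemma~\ref{lemma:homfunctors} to see that $\Hom_{R^\#}(N,N'') \to \Hom_\Lambda(FN,FN'')$ is bijective. (Alternatively: since $R^\#\in\add\tilde M$, $F$ restricted to $\add\tilde M$ is fully faithful onto $\operatorname{proj}\Lambda$, and by presenting $N,N''$ via the exact $\Sigma_1$-approximation sequences — which $F$ sends to exact sequences, as those are $\Sigma_1$-exact — full faithfulness propagates from $\add\tilde M$ to all of $\MCM(R^\#)$ by the five lemma.)

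The substance is essential surjectivity, and here Lemma~\ref{lem:MCMsyz} does the heavy lifting. Given $X \in \GP(\Lambda)$, I would take a complete resolution $\PP_\bullet$ of $X$; since $X$ is Gorenstein projective, $X \cong \Omega^m_\Lambda(\Omega^{-m}_\Lambda(X))$ for any $m$ — that is, $X$ is itself an $m$-th syzygy of some finitely generated $\Lambda$-module (namely of the cokernel appearing $m$ steps to the right in $\PP_\bullet$). Apply Lemma~\ref{lem:MCMsyz} with $m = \max\{2,d+1\}$ to that module: it yields a MCM $R^\#$-module $N$ with $\Omega^m_\Lambda(\,\cdot\,) \cong \Hom_{R^\#}(\tilde M,N)$, i.e. $X \cong FN$. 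Thus $F$ is essentially surjective, and combined with (a) and (b) it is an equivalence $\MCM(R^\#)\xrightarrow{\ \sim\ }\GP(\Lambda)$. The "in particular" clause is then immediate.

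I expect the main obstacle to be bookkeeping around syzygies and free summands rather than anything deep: making precise that $X \in \GP(\Lambda)$ really is an honest $m$-fold syzygy of a \emph{finitely generated} module (so Lemma~\ref{lem:MCMsyz} applies) — this uses that $\Lambda$ is noetherian and that complete resolutions are by finitely generated projectives, which holds here since $\Lambda = \End_{R^\#}(\tilde M)$ is module-finite over the complete local ring $R^\#$ — and keeping track that the ambiguity of $\Omega_{R^\#}$ up to free summands is harmless because $R^\#$ is a direct summand of $\tilde M$, so $F$ of a free module is a projective $\Lambda$-module. One should also note that $F$ is exact on the relevant short exact sequences (the $\Sigma_1$-approximation sequences of Theorem~\ref{thm:approx}) because those are $\Sigma_1$-exact and $\tilde M\in\Sigma_1$; this is what lets the diagram chases for full faithfulness go through. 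None of these points requires new ideas beyond what is already assembled in Lemmas~\ref{lem:ses}, \ref{lemma:homfunctors}, \ref{lem:MCMsyz} and Propositions~\ref{prop:ses-mf}, \ref{prop:completeres}.
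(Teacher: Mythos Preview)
Your proposal is correct and follows essentially the same route as the paper. The paper also proves denseness via Lemma~\ref{lem:MCMsyz} and Proposition~\ref{prop:completeres}, and proves fullness by lifting a map $g\colon FN\to FN'$ to the projective presentations coming from the $\Sigma_1$-approximation sequences (your ``alternatively'' argument). The one minor difference is in essential surjectivity: the paper applies Lemma~\ref{lem:MCMsyz} to $X$ itself, obtaining $\Omega^m_\Lambda(X)\cong FN$, and then compares the resulting $2$-periodic complete resolution $F(\TT)$ of $\Omega^m_\Lambda(X)$ with the given complete resolution of $X$ to read off $X\cong FN$ or $X\cong F(\Omega_{R^\#}N)$ (depending on the parity of $m$), up to projective summands; you instead pass to the $m$-th cosyzygy first and apply Lemma~\ref{lem:MCMsyz} there. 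Both arrive at ``$X\cong FN$ up to projective summands,'' and both then need full faithfulness together with idempotent-completeness of $\MCM(R^\#)$ to absorb the projective discrepancy---a point you flag and the paper leaves implicit.
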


\begin{proof} We begin with the second claim, which establishes denseness of $F$.  By
  Proposition~\ref{prop:completeres} we know that every $\Lambda$-module of the form
  $\Hom_{R^\#}(\tilde{M},N)$ with $N$ in $\MCM(R^\#)$ is Gorenstein projective.
  Conversely, suppose that $X$ belongs to $\GP(\Lambda)$.  By Lemma~\ref{lem:MCMsyz}
  $\Omega_{\Lambda}^m(X) \cong \Hom_{R^\#}(\tilde{M},N)$ for some $N$ in
  $\MCM(R^\#)$, and thus by Proposition~\ref{prop:completeres},
  $\Omega_{\Lambda}^m(X)$ has a complete resolution of the form
  $\Hom_{R^\#}(\tilde{M},\TT)$ for an exact sequence $\TT$ as in
  \eqref{eq:completeSigmaRes}. Here, as there, $m = \max\{2, d+1\}$.  Comparing this
  complete resolution to that of $X$ we see that, up to projective direct summands,
  $X \cong \Hom_{R^\#}(\tilde{M},N)$ or
  $X \cong \Hom_{R^\#}(\tilde{M},\Omega_{R^\#}N)$ depending on whether $m$ is even or
  odd, respectively.

  The functor $F$ is easily seen to be faithful since $R^\#$ is a direct summand of
  $\tilde{M}$.  To see that it is full, consider a map $g \colon F(N) \to F(N')$ for
  $N, N'$ in $\MCM(R^\#)$.  Then $g$ can be lifted to a map of projective
  presentations as in the diagram,
  \[
    \xymatrix{F(M_1) \ar[r]^{F(f_1)} \ar[d]^{g_1} & F(M_0) \ar[r]^{F(f_0)} \ar[d]^{g_0}
      & F(N) \ar[d]^g \ar[r] & 0 \\ F(M'_1) \ar[r]^{F(f'_1)} & F(M'_0) \ar[r]^{F(f'_0)}
      & F(N') \ar[r] & 0}
  \]
  with $M_i, M'_i \in \add{\tilde{M}}$.  Since $F$ induces an equivalence from
  $\add{\tilde{M}}$ to $\add(\Lambda)$, we can write $g_i = F(h_i)$ for suitable maps
  $h_i$ (for $i=1,2$), and we obtain a commutative exact diagram in $\MCM(R^\#)$
  \[
    \xymatrix{M_1 \ar[r]^{f_1} \ar[d]^{h_1} & M_0 \ar[r]^{f_0} \ar[d]^{h_0} & N
      \ar@{-->}[d]^h \ar[r] & 0 \\ M'_1 \ar[r]^{f'_1} & M'_0 \ar[r]^{f'_0} & N'
      \ar[r] & 0}
  \]
  yielding a map $h \colon N \to N'$.  It then follows that $g = F(h)$.
\end{proof}

\begin{remark}\label{rmk:FrobeniusCategories}  
  The preceding Proposition and Theorem can also be deduced from recent work of
  Iyama, Kalck, Wemyss and Yang \cite{IKWY:2015}.  To see this, recall from Remark~\ref{rmk:relativeHomology} 
  that the category $\MCM(R^\#)$ has a
  (relative) Frobenius category structure obtained by taking
  $\Sigma_1 =\add{\tilde{M}}$ as the subcategory of (relative) projective objects,
  and $\Sigma_1$-exact sequences as the exact sequences.  
  %For instance, Theorem~\ref{thm:approx} assures that there are enough (relative) projectives and injectives for this exact structure.  Moreover, in the context of relative  homological algebra developed by Auslander and Solberg \cite{AusSol:1993}, it appears that this relative Frobenius structure corresponds to the subfunctor of  $\Ext^1_{R^\#}(-,-)$ consisting of extensions in the image of multiplication by $y$.
   Since the functor categories $\rmod{\MCM(R^\#)}$ and
  $\rmod{\MCM(R^\#)^{\op}}$ have global dimension at most $m = \max\{2,d+1\}$
  by~\cite{Holm:2015}, Theorem 2.8 of \cite{IKWY:2015} applies.  Part (1) of that
  theorem corresponds to the Iwanaga-Gorenstein property of our
  $\Lambda = \End_{R^\#}(\tilde{M})$, while part (2) gives the equivalence between
  $\MCM(R^\#)$ and $\GP(\Lambda)$.  Of course, it also follows from this realization
  of a Frobenius structure, that we have an equivalence of triangulated categories
  induced by
  \[
    \Hom_{R^\#}(\tilde{M},-) \colon  \MCM(R^\#)/[\Sigma_1] \approx
  \underline{\GP}(\Lambda)\,,
  \]
  where $[\Sigma_1]$ denotes the ideal of morphisms that
  factor through an object in the subcategory $\Sigma_1$ and we write
  $\underline{\GP}(\Lambda)$ for the stable category of Gorenstein projective
  $\Lambda$-modules.
\end{remark}

\section{Generation of MCM module categories}
\label{sect:generate}

When $R$ is a Gorenstein local ring, $\MCM(R)$ is a Frobenius category, and thus the
stable category $\uMCM(R)$, whose objects are the same as $\MCM(R)$ and whose
$\Hom$-sets are obtained by killing those morphisms factoring through a free
$R$-module, is triangulated with suspension given by the co-syzygy functor
$\Omega^{-1}$~\cite{Buchweitz:1987}.  This stable category is also
equivalent~\cite{Orlov:DerCatsCohTriang} to the \emph{singularity category}
$D_{\text{sg}}(R)$, that is, the Verdier quotient of the bounded derived category
$D^b(\mod R)$ by the subcategory of perfect complexes. In this section we investigate
the dimension of this triangulated category, in the sense of Rouquier, when $R$ is an
isolated hypersurface singularity.  In this setting, Ballard, Favero and Katzarkov
have found a general upper bound for the dimension of $\uMCM(R)$, showing in
particular that it is always finite \cite{Ballard-Favero-Katzarkov:2012}.  We focus
on a special class of hypersurfaces, where we improve upon this upper bound.

We begin by reviewing the definition of the dimension of a triangulated category
introduced by Rouquier~{\cite{Rouquier:2008}}.  Let $\calt$ be a triangulated
category and $\cali$ a full subcategory of $\calt$.  (For ease of exposition, 
we will assume all full subcategories are closed under isomorphisms.)  
We let $\gen{\cali}$ denote the
smallest full subcategory of $\calt$ that contains $\cali$ and is closed under
isomorphisms, direct summands, finite direct sums and shifts.  For two full
subcategories $\cali_1$ and $\cali_2$ we also write $\cali_1 * \cali_2$ for the full
subcategory of all $Y$ for which $\calt$ contains a distinguished triangle
$X_1 \to Y \to X_2 \to X_1[1]$ with $X_i \in \cali_i$, and we further set
$\cali_1 \diamond \cali_2 = \gen{\cali_1 * \cali_2}$.  Observe that if $\calt = \uMCM(R)$, then $Y \in \cali_1 * \cali_2$
if and only if there is a short exact sequence $0 \to X_1 \to Y \oplus F \to X_2 \to 0$ in $\MCM(R)$ with $X_i \in \cali_i$ for $i=1,2$ and $F$ free.  Following the conventions of
\cite{Ballard-Favero-Katzarkov:2012}, we inductively define
$\gen{\cali}_0 = \gen{\cali}$ and
$\gen{\cali}_{n} = \gen{\cali}_{n-1} \diamond \gen{\cali}$ for $n \geq 1$.

\begin{definition}[Rouquier]
  \label{def:DimTriCat} The \emph{dimension} of $\calt$ is
  the smallest integer $n$ such that $\calt = \gen{X}_n$ for some object $X$ in
  $\calt$, or else $\infty$ if no such $n$ exists.
\end{definition}

% Ballard, Favero and Katzarkov~{\cite{Ballard-Favero-Katzarkov:2012}} have shown
% that the stable category of MCM modules over any isolated hypersurface singularity
% has finite dimension, and they give an upper bound for this dimension.
If $R = k[\![x_0,\ldots,x_d]\!]/(f)$ is a complete isolated hypersurface singularity
over an algebraically closed field $k$ of characteristic zero, Ballard, Favero and
Katzarkov bound the dimension of $\uMCM(R)$ in terms of the Tjurina algebra
$k[\![x_0,\ldots,x_d]\!]/\Delta_f$, where $\Delta_f$ is the ideal in
$k[\![x_0,\ldots,x_d]\!]$ generated by the partial derivatives of $f$ with respect to
the $x_i$.  Recall that this algebra is Artinian and local.

\begin{theorem}[{\cite{Ballard-Favero-Katzarkov:2012}}]
  \label{thm:HypersurfaceCatDim}
  Let $R$ be a $d$-dimensional isolated hypersurface singularity as above, with $\ell$
  denoting the Loewy length of the Tjurina algebra of $R$.  Then
  $\uMCM(R) = \gen{\Omega^d(k)}_{2\ell-1}$, and in particular
  $\dim(\uMCM(R)) \leq 2 \ell - 1$.
\end{theorem}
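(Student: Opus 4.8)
The plan is to prove the bound $\dim(\uMCM(R)) \leq 2\ell-1$ by an explicit dévissage argument, showing that $\uMCM(R) = \gen{\Omega^d(k)}_{2\ell-1}$. The starting point is the observation that $\uMCM(R)$ is generated (as a thick subcategory) by $\Omega^d(k)$, where $k$ is the residue field; indeed, for an isolated hypersurface singularity, $k$ is the unique simple object in $D_{\mathrm{sg}}(R)$ up to the relevant operations, and its $d$-th syzygy is MCM. The real content is controlling the \emph{number} of cone operations needed, and this is where the Tjurina algebra $T = k[\![x_0,\ldots,x_d]\!]/\Delta_f$ and its Loewy length $\ell$ enter. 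The key mechanism is Lemma~\ref{lem:speed-kills} (or rather its philosophy): the annihilator of $\Ext^1$-modules over $R$ controls how efficiently one module builds another via short exact sequences.

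First I would recall that for an isolated hypersurface singularity, the Jacobian ideal $\Delta_f$ annihilates $\Ext^1_R(M,N)$ for all MCM modules $M,N$ — this is the classical fact that the singular locus (cut out, up to radical, by $\Delta_f$) annihilates stable $\Ext$. Combined with Lemma~\ref{lem:ses}/\ref{lem:speed-kills}, this means: for any $z \in \Delta_f$ and any MCM module $X$ with $z$ a non-zerodivisor, there is a short exact sequence $0 \to \Omega_R(X) \to \Omega_R(X/zX) \to X \to 0$, and in $\uMCM(R)$ this realizes $X$ as an extension built from $X/zX$-related syzygies. More usefully, the assertion "$\Delta_f$ kills $\Ext^1_R(M,-)$" translates, via the long exact sequence, into statements about factoring through a bounded filtration.

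Next I would set up the induction on Loewy length. Since $T$ has Loewy length $\ell$, we have $\m^\ell \subseteq \Delta_f$ where $\m = (x_0,\ldots,x_d)$. The strategy is to show: any MCM module $M$ lies in $\gen{\Omega^d(k)}_{2\ell-1}$, by resolving $M$ against the filtration of $R/\Delta_f$ (or of $k$) by powers of $\m$. One builds $M$ from copies of $\Omega^d(k)$ using $\ell$ "layers" coming from the $\m$-adic filtration $R \supseteq \m \supseteq \cdots \supseteq \m^\ell \supseteq \Delta_f$ — but each layer, because it involves both a syzygy and a cosyzygy step (to stay inside $\uMCM$ and to convert $\Ext^1$-vanishing into a triangle), costs two cone operations rather than one, giving the factor $2\ell$; the sharpening to $2\ell-1$ comes from the last layer being cheaper (the bottom of the filtration $\m^{\ell-1}/\m^\ell$ is a $k$-vector space and contributes only a direct sum of shifts of $k$, i.e.\ no extra $\diamond$). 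Concretely, one shows $k \in \gen{R/\m^{i}}$-type statements lift to $\Omega^d(k)$ generating $R/\Delta_f$-modules in $\ell-1$ steps, then uses that every MCM $M$ is built from $M/\Delta_f M$ data in one more doubled step.

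The main obstacle will be making the bookkeeping of cone operations precise — in particular, proving the key reduction that \emph{every} MCM module $M$ is obtained from $\Omega^d(k)$-generated pieces using exactly the filtration of length $\ell$, with each step genuinely a single $\diamond$ (i.e.\ a single triangle up to $\gen{-}$-closure). This requires, for each $z \in \m$ (and then iterating over a generating sequence / over powers of $\m$), carefully invoking Lemma~\ref{lem:speed-kills}'s equivalence in the form: $z$ killing $\Ext^1_R(M,\Omega M)$ forces the sequence $0 \to \Omega M \to \Omega(M/zM) \to M \to 0$ to be "as split as possible," so that in $\uMCM(R)$ the object $M$ is a summand of something built in one triangle from $M/zM$. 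Pushing this through a full generating set of $\Delta_f$ and tracking that $\m^\ell \subseteq \Delta_f$ gives a factor $\ell$, while the inherent syzygy/cosyzygy doubling gives $2\ell$, with the final $-1$ extracted by a slightly more careful analysis of the top or bottom graded piece. I would also need to check that $\Omega^d(k)$ is itself MCM and that the identification of $D_{\mathrm{sg}}(R)$ with $\uMCM(R)$ respects the generation count, both of which are standard.
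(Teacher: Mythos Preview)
The paper does not prove this theorem at all: it is quoted verbatim from \cite{Ballard-Favero-Katzarkov:2012} as background for Section~\ref{sect:generate}, with no argument supplied. So there is no ``paper's own proof'' to compare against; your proposal is an attempt to reconstruct the original BFK argument.

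On the merits of your sketch: the high-level ingredients are the right ones---the Jacobian ideal $\Delta_f$ annihilates stable $\Ext$ over an isolated hypersurface singularity, and the Loewy length $\ell$ of the Tjurina algebra measures how many multiplications by elements of $\m$ are needed to land in $\Delta_f$. But the mechanism you describe for converting this into a generation bound is too vague to be a proof. Your explanation of the factor of $2$ (``each layer \ldots\ costs two cone operations rather than one'' because of a ``syzygy/cosyzygy doubling'') does not pin down which triangles you are actually taking, and the extraction of the final $-1$ is asserted rather than argued. In the actual BFK proof the bound $2\ell-1$ arises from a specific iterated-cone construction: one chooses a sequence $w_1,\dots,w_\ell \in \m$ with $w_1\cdots w_\ell \in \Delta_f$, and each multiplication-by-$w_i$ map on an object of $\uMCM(R)$ sits in a triangle whose cone is built from two copies of a fixed generator (this is where the $2$ comes from, not from a syzygy/cosyzygy pair in your sense); after $\ell$ iterations the composite map is stably zero, placing the original object in $\gen{G}_{2\ell-1}$. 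Your filtration-by-powers-of-$\m$ picture and your appeal to Lemma~\ref{lem:speed-kills} are in the right spirit but do not by themselves produce this count. If you want to give an independent proof, you would need to make the iterated-cone bookkeeping explicit.
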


%This is not exactly the same statement they give, but it should suffice here.

In particular, if $R$ is defined by the polynomial
$f = x_0^{a_0} + \cdots + x_d^{a_d}$, where each $a_i$ is at least $2$ and not
divisible by $\charac(k)$, then the Tjurina algebra of $R$ is isomorphic to
$k[x_0,\ldots,x_d]/(x_0^{a_0-1}, \ldots, x_d^{a_d-1})$, and it is easy to see that
its Loewy length is
\[
\ell = \sum_{i=0}^d (a_i - 2) + 1 = \sum_{i=0}^d a_i - 2d - 1\,.
\]
In this case, we have $\dim(\underline{\MCM}(R)) \leq 2\sum_{i=0}^d a_i - 4d - 3$.

%\todo{Remarks about using extensions/short exact sequences to compute $\cali_1 * \cali_2$ for subcategories of the stable category.--Added one sentence in 2nd paragraph.}

We now return to the context explored in the previous section.  Thus assume that $R$
is an isolated hypersurface singularity given by a non-zero power series $f$ in
$x_0, \ldots, x_d$ and set $R^\# = k[\![x_0,\ldots,x_d,y]\!]/(f+y^n)$.  For
$1 \leq i \leq n$, we set
\[
  \Sigma_i = \add \left\{\Omega_{R^\#}(M) \,\middle|\, M \in
    \MCM(R^\#/(y^i))\right\}\,,
\] 
and continue to write $\Sigma_i$ for its image in the stable category.  Note that $\Sigma_i$ is closed under isomorphisms, finite direct sums and direct summands by definition, and is closed under shifts by Corollary~\ref{cor:splitApprox}.

The following lemma, which is a slight generalization of \cite[Lemma
2.4]{Herzog-Popescu:1997}, will be used repeatedly in what follows.  As the lemma can
be stated over any ring $\Gamma$, we recall that syzygies are only defined up to
projective summands.  Thus for each left $\Gamma$-module $M$ we fix a projective resolution $(F_{i}^M,\partial_{i}^M)_{i\geq 0}$ of $M$ and define
$\Omega^i_\Gamma(M) = \ker \partial_{i-1}^M$.  We say that two $\Gamma$-modules $X$ and $Y$ are stably isomorphic if $X \oplus P \cong Y \oplus P'$ for some projective $\Gamma$-modules $P$ and $P'$.

\begin{lemma}\label{lemma:doubleSyzygy}
  Let $\Gamma$ be any ring, let $I$ be a proper two-sided ideal of $\Gamma$ such that
  $\bar{\Gamma} = \Gamma/I$ has finite projective dimension $r \geq 1$ as a left
  $\Gamma$-module.  For any left $\bar{\Gamma}$-module $X$ we have
\[
\Omega^{r+1}_\Gamma(X) \oplus P \cong \Omega^{r}_\Gamma(\Omega_{\bar{\Gamma}}(X)) \oplus P'
\]
for projectives $P$ and $P'$.  More generally, for any $m \geq 1$ there exist
projectives $P, P'$ such that
\[
\Omega^{r+m}_\Gamma(X) \oplus P \cong \Omega^{r}_\Gamma(\Omega^{m}_{\bar{\Gamma}}(X)) \oplus P'\,.
\]
\end{lemma}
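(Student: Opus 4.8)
The plan is to prove the first displayed isomorphism (the case $m=1$) directly, and then derive the general case by an easy induction on $m$. For the base case, I would start from a projective resolution $(F_i, \partial_i)_{i \ge 0}$ of $X$ over $\bar{\Gamma}$, so that $\Omega^1_{\bar\Gamma}(X) = \ker \partial_0$ and, more to the point, the beginning $0 \to \Omega^1_{\bar\Gamma}(X) \to F_0 \to X \to 0$ is exact. The key observation is that a projective $\bar\Gamma$-module, viewed as a $\Gamma$-module, has projective dimension at most $r$ over $\Gamma$ (since $\pd_\Gamma \bar\Gamma = r$ and projectives are summands of free modules), and more precisely its $r$-th $\Gamma$-syzygy is projective. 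Thus applying $\Omega^r_\Gamma$ to the short exact sequence $0 \to \Omega^1_{\bar\Gamma}(X) \to F_0 \to X \to 0$ and using the standard syzygy dimension-shift for short exact sequences gives a stable isomorphism $\Omega^r_\Gamma(\Omega^1_{\bar\Gamma}(X)) \oplus (\text{proj}) \cong \Omega^{r+1}_\Gamma(X) \oplus (\text{proj})$, because $\Omega^r_\Gamma(F_0)$ is projective.

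**Execution in order.** First I would record the lemma that for a projective $\bar\Gamma$-module $P$, the module $\Omega^r_\Gamma(P)$ is projective; this reduces immediately to $P = \bar\Gamma$ (additivity of syzygies over direct sums and closure under summands, both only up to projective summands, which is exactly what "stably isomorphic" absorbs), and then to the hypothesis $\pd_\Gamma \bar\Gamma = r$. Second, I would invoke the elementary fact that for a short exact sequence $0 \to A \to B \to C \to 0$ of $\Gamma$-modules one has, for every $i \ge 1$, a stable isomorphism $\Omega^i_\Gamma(C) \oplus Q \cong \Omega^{i-1}_\Gamma(A) \oplus Q'$ when $\Omega^{i-1}_\Gamma(B)$ happens to be projective — or more robustly, the generalized horseshoe/dimension-shift statement $\Omega^i_\Gamma(C)$ is stably isomorphic to $\Omega^{i-1}_\Gamma(A)$ modulo a summand that is a syzygy of $B$. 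Applying this with $A = \Omega^1_{\bar\Gamma}(X)$, $B = F_0$, $C = X$, and $i = r+1$ yields the $m=1$ case. Third, for the inductive step I would apply the $m=1$ case with $X$ replaced by $\Omega^{m-1}_{\bar\Gamma}(X)$, which is again a $\bar\Gamma$-module, obtaining $\Omega^{r+1}_\Gamma(\Omega^{m-1}_{\bar\Gamma}(X)) \oplus P \cong \Omega^r_\Gamma(\Omega^m_{\bar\Gamma}(X)) \oplus P'$, and then apply $\Omega^{m-1}_\Gamma$ to the induction hypothesis $\Omega^{r+m-1}_\Gamma(X) \oplus P \cong \Omega^r_\Gamma(\Omega^{m-1}_{\bar\Gamma}(X)) \oplus P'$, using that $\Omega^{m-1}_\Gamma$ sends stable isomorphisms to stable isomorphisms and that $\Omega^{m-1}_\Gamma \circ \Omega^{r+1}_\Gamma = \Omega^{r+m}_\Gamma$ up to projectives.

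**Main obstacle.** The only real subtlety is bookkeeping of the projective summands: syzygies are only well-defined up to projective direct summands, so every equality must be read as a stable isomorphism, and one must be careful that $\Omega^j_\Gamma$ applied to a stable isomorphism $M \oplus P \cong N \oplus P'$ again gives a stable isomorphism — this holds because $\Omega^j_\Gamma(M \oplus P) \cong \Omega^j_\Gamma(M) \oplus \Omega^j_\Gamma(P)$ with $\Omega^j_\Gamma(P)$ projective, and similarly on the other side. Once the convention "all isomorphisms are stable isomorphisms" is fixed, the argument is a routine dimension-shift; the generalization from $m=1$ to general $m$ is a two-line induction and presents no difficulty. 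I do not anticipate any issue requiring Noetherian or finiteness hypotheses beyond $\pd_\Gamma \bar\Gamma = r < \infty$, which is exactly what is assumed.
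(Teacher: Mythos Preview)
Your argument is correct and is in fact more streamlined than the paper's. The paper starts from a $\Gamma$-projective resolution of $X$, reduces the first step modulo $I$ to obtain a $\bar\Gamma$-presentation with kernel $K$ (compared to $\Omega_{\bar\Gamma}X$ via Schanuel), and then builds a second diagram producing a module $L$ that is simultaneously stably isomorphic to $\Omega_\Gamma K$ and an extension of $IF_0$ by $\Omega^2_\Gamma X$; the conclusion then uses the horseshoe lemma together with $\pd_\Gamma I = r-1$. You instead start directly from a $\bar\Gamma$-projective presentation $0 \to \Omega_{\bar\Gamma}X \to F_0 \to X \to 0$, view it over $\Gamma$, and apply the horseshoe lemma in one step, using only that $\pd_\Gamma F_0 \leq r$. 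This avoids the two auxiliary diagrams and the passage through the ideal $I$ entirely, at the cost of perhaps being slightly less explicit about the intermediate modules. One small slip: in your induction step you should apply $\Omega_\Gamma^1$, not $\Omega_\Gamma^{m-1}$, to the induction hypothesis $\Omega_\Gamma^{r+m-1}(X) \cong_s \Omega_\Gamma^r(\Omega_{\bar\Gamma}^{m-1}(X))$ to obtain $\Omega_\Gamma^{r+m}(X) \cong_s \Omega_\Gamma^{r+1}(\Omega_{\bar\Gamma}^{m-1}(X))$, which then combines with the $m=1$ case applied to $\Omega_{\bar\Gamma}^{m-1}(X)$; but this is a trivial arithmetic correction and the induction is indeed routine.
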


\begin{proof} Fix a projective resolution
  $\cdots \to F_1 \stackrel{\partial_1}{\to} F_0 \stackrel{\partial_0}{\to} X \to 0$ so that
  $\Omega^i_\Gamma(X) = \ker \partial_{i-1}$ for each $i \geq 1$.
  Tensoring down to $\bar{\Gamma}$, we obtain an exact commutative diagram of
  $\Gamma$-modules
  \[
    \xymatrix{
      & 0 \ar[d] & 0 \ar[d] \\
      & IF_0 \ar[d] \ar@{=}[r] & IF_0 \ar[d] \\
      0 \ar[r] & \Omega_\Gamma X \ar[d] \ar[r] & F_0 \ar[d] \ar[r] & X \ar@{=}[d] \ar[r] & 0 \\
      0 \ar[r] & K \ar[r] \ar[d] & F_0/IF_0 \ar[r] \ar[d] & X \ar[r] & 0 \\
      & 0  & 0
    } 
  \]
  where $K \oplus Q_1 \cong \Omega_{\bar{\Gamma}}X \oplus Q_2$ for projective
  $\bar{\Gamma}$-modules $Q_1, Q_2$ by Schanuel's Lemma.  Since $\pd ({}_{\Gamma}\bar{\Gamma}) = r$, $\Omega^{r}_{\Gamma}(Q_i)$ is a projective $\Gamma$-module for each $i$.  Thus, $\Omega^{r}_{\Gamma}K$ and $\Omega^{r}_\Gamma(\Omega_{\bar{\Gamma}} X)$ will be stably isomorphic over $\Gamma$.
  
  We then have a second diagram
  as below.
  \[
    \xymatrix{
      & & & 0 \ar[d] \\
      & 0 \ar[d] & 0 \ar[d] & IF_0 \ar[d] \\
      0 \ar[r] & \Omega^2_\Gamma X \ar[d] \ar[r] & F_1 \ar@{=}[d] \ar[r] & \Omega_\Gamma X \ar[d] \ar[r] & 0 \\
      0 \ar[r] & L \ar[d] \ar[r] & F_1 \ar[d] \ar[r] & K \ar[r] \ar[d] & 0 \\
      & IF_0 \ar[d] & 0  & 0 \\
      & 0
    }
  \]
  Considering the vertical sequence on the left, we can use the horseshoe lemma to obtain a projective resolution of $L$ and a short exact sequence of syzygies
 $$0 \to \Omega^{r+1}_\Gamma X \to \Omega^{r-1}_\Gamma L \oplus P \to \Omega^{r-1}_\Gamma (IF_0) \oplus P' \to 0$$
for projective $\Gamma$-modules $P$ and $P'$.  Since $IF_0 \in \add(I)$ and ${}_\Gamma I$ has projective dimension $r-1$, $\Omega^{r-1}_\Gamma(IF_0)$ is projective and this sequence splits.  It follows that $\Omega^{r+1}_\Gamma X$ is stably isomorphic to $\Omega^{r-1}_{\Gamma} L$.   Since $L$ is stably isomorphic to $\Omega_\Gamma (K)$, we see that $\Omega^{r+1}_\Gamma X$ is stably isomorphic to $\Omega^{r}_\Gamma K$ and thus to $\Omega^{r}_\Gamma(\Omega_{\bar{\Gamma}}X)$.
  
  The second claim now follows via induction on $m\geq 1$.
\end{proof}

 % old version - less general
 % $$\xymatrix{& 0 \ar[d] & 0 \ar[d] \\
 % & zF_0 \ar[d] \ar@{=}[r] & zF_0 \ar[d] \\
 % 0 \ar[r] & \Omega_R X \ar[d] \ar[r] & F_0 \ar[d] \ar[r] & X \ar@{=}[d] \ar[r] & 0 \\
 % 0 \ar[r] & \Omega_{\bar{R}}X \oplus \bar{F} \ar[r] \ar[d] & F_0/zF_0 \ar[r] \ar[d] & X \ar[r] & 0 \\
 % & 0  & 0 } $$
 % where $F$ is a free $R$-module and $\bar{F} = F/zF$.  We also have
 % $$\xymatrix{& & & 0 \ar[d] \\
 % & 0 \ar[d] & 0 \ar[d] & zF_0 \ar[d] \\
 % 0 \ar[r] & \Omega^2_R X \ar[d] \ar[r] & F_1 \ar@{=}[d] \ar[r] & \Omega_R X \ar[d] \ar[r] & 0 \\
 % 0 \ar[r] & \Omega_R(\Omega_{\bar{R}}X) \oplus F' \ar[d] \ar[r] & F_1 \ar[d] \ar[r] & \Omega_{\bar{R}}X \oplus \bar{F} \ar[r] \ar[d] & 0 \\
 % & zF_0 \ar[d] & 0  & 0 \\
 % & 0}$$
 % Since $zF_0$ is free the vertical sequence on the left splits and we have the desired result.

 % Continuing the notation of the Lemma, induction on $r$ now shows $\Omega^r_{R}(X) \cong \Omega_R(\Omega^{r-1}_{\bar{R}}(X))$,
 % up to free summands, for all $r \geq 2$.

In particular, when the ideal $I$ is generated by a central regular sequence of length $r$, we have $\pd ({}_\Gamma \Gamma/I) = r$ and thus the preceding lemma applies in this situation.   Furthermore, for such ideals $I$ in a commutative ring $\Gamma$, the Eagon-Northcott resolution shows that $\pd ({}_\Gamma \Gamma/I^j) = r$ for each $j \geq 1$ \cite{Eagon-Northcott}, and thus the lemma also applies for the ideal $I^j$, as will be needed a bit later on.

\begin{prop}\label{prop:SigmastarSigma}
  We have $\Sigma_k = \Sigma_{k-j} \diamond \Sigma_{j}$ for every
  $1 \leq j < k \leq n$.  In particular,
  $\MCM(R^\#) = \Sigma_{n-1} = \gen{\Sigma_1}_{n-2}$.
\end{prop}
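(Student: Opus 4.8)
The plan is to establish the identity $\Sigma_k = \Sigma_{k-j} \diamond \Sigma_j$ by proving two inclusions, with the harder and more substantive one being $\Sigma_k \subseteq \Sigma_{k-j} \diamond \Sigma_j$; the reverse inclusion should follow from the observation that $\Sigma_i \diamond \Sigma_i$-type decompositions can be trivially padded, or more precisely from the fact that $\Sigma_{k-j}, \Sigma_j \subseteq \Sigma_k$ (since $y^j$ annihilating an $\Ext^1$ forces $y^k$ to as well, via Corollary~\ref{cor:splitApprox}), combined with closure of $\Sigma_k$ under the triangulated operations. For the forward inclusion, I would take an arbitrary indecomposable generator $N = \Omega_{R^\#}(M)$ of $\Sigma_k$ with $M$ a MCM $A_k$-module, where $A_k = S[y]/(f,y^k)$, and exhibit a distinguished triangle in $\uMCM(R^\#)$ realizing $N$ as an extension of something in $\Sigma_j$ by something in $\Sigma_{k-j}$.

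The key mechanism should be the short exact sequence of $R^\#$-modules $0 \to y^{k-j}M \to M \to M/y^{k-j}M \to 0$: here $M/y^{k-j}M$ is an $A_{k-j}$-module, while $y^{k-j}M$ is annihilated by $y^j$ and hence (being a submodule of a MCM module, after replacing by its MCM approximation or arguing on depth) gives rise to an object controlled by $\Sigma_j$. Applying the syzygy functor $\Omega_{R^\#}$ — which is exact up to projective summands on the stable category — yields a triangle $\Omega_{R^\#}(y^{k-j}M) \to \Omega_{R^\#}(M) \to \Omega_{R^\#}(M/y^{k-j}M) \to$ in $\uMCM(R^\#)$. The middle term is $N$, the right-hand term lies in $\Sigma_{k-j}$ by definition, and the point is that $\Omega_{R^\#}(y^{k-j}M)$ lies in $\Sigma_j$: since $y^{k-j}M$ is a module over $A_j$ (as $y^j \cdot y^{k-j}M = y^k M = 0$), its first syzygy is by definition in $\Sigma_j$. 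Thus $N \in \Sigma_{k-j} * \Sigma_j$, and after closing up under summands we land in $\Sigma_{k-j}\diamond\Sigma_j$; since $\Sigma_k$ is generated by such $N$ and the diamond is closed under the relevant operations, the inclusion follows.

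The main obstacle I anticipate is the bookkeeping around projective summands and the MCM hypothesis: $y^{k-j}M$ need not itself be MCM even when $M$ is, so one has to be careful that $\Omega_{R^\#}(y^{k-j}M)$ is still a legitimate MCM module and that the syzygy triangle is valid in $\uMCM(R^\#)$. The cleanest fix is probably to note that over the hypersurface $R^\#$ one has $\Omega^2 \cong \mathrm{id}$ on modules without free summands, so syzygies of arbitrary finitely generated modules are MCM, and that the relevant long exact sequences only involve the objects up to projective (equivalently, free) summands — which is exactly the ambiguity already built into the definition of $\Sigma_i$ via $\add$. Once the forward inclusion is in hand, the "in particular" clause is immediate: iterating $\Sigma_k = \Sigma_{k-1}\diamond\Sigma_1$ gives $\MCM(R^\#) = \Sigma_{n-1} = \Sigma_1 \diamond \cdots \diamond \Sigma_1$ ($n-1$ copies) $= \gen{\Sigma_1}_{n-2}$, where the equality $\MCM(R^\#) = \Sigma_{n-1}$ is the content of Herzog–Popescu's theorem (Theorem~\ref{thm:Herzog-Popescu}, in the generalized form of Proposition~\ref{prop:newHP}, since the field hypothesis is in force here and $\charac k \nmid n$ can be assumed, or one invokes the version already recorded).
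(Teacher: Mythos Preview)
Your forward inclusion has the right skeleton --- filter $M$ by powers of $y$ and apply syzygies --- but the fix you propose for the MCM obstacle does not work as stated. The periodicity $\Omega_{R^\#}^2 \cong \id$ holds only on MCM modules, and even once you pass to a high syzygy $\Omega_{R^\#}^m(y^{k-j}M)$ that \emph{is} MCM over $R^\#$, you have not shown it lies in $\Sigma_j$: the definition demands it be (a summand of) the first $R^\#$-syzygy of a MCM $A_j$-module, not merely of some $A_j$-module. The paper closes this gap with Lemma~\ref{lemma:doubleSyzygy}: since $\pd_{R^\#}(A_j)=1$, one has $\Omega_{R^\#}^{m}(Y) \cong \Omega_{R^\#}\bigl(\Omega_{A_j}^{m-1}(Y)\bigr)$ up to free summands for any $A_j$-module $Y$, and for $m$ large the inner syzygy is MCM over $A_j$. (An alternative route you could have taken, but did not spell out: once $\Omega_{R^\#}^m(y^{k-j}M)$ is MCM, observe that $\Ext^1_{R^\#}(\Omega_{R^\#}^m(y^{k-j}M),-) \cong \Ext^{m+1}_{R^\#}(y^{k-j}M,-)$ is killed by $y^j$, and invoke Corollary~\ref{cor:splitApprox}.)

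Your reverse inclusion is also wrong as written. The subcategory $\Sigma_k$ is \emph{not} closed under extensions: if it were, then so would $\Sigma_1$ be, forcing $\gen{\Sigma_1}_{n-2}=\Sigma_1$, which for $n\geq 3$ contradicts the very conclusion $\gen{\Sigma_1}_{n-2}=\MCM(R^\#)$ you are aiming for. What you actually need is the finer statement that an extension of something in $\Sigma_j$ by something in $\Sigma_{k-j}$ lands in $\Sigma_k$; this does follow from Corollary~\ref{cor:splitApprox} via the long exact sequence in $\Ext^1_{R^\#}(-,Y)$ (push $y^{k-j}\alpha$ back to $\Ext^1(N'',Y)$ and then multiply by $y^j$), but you did not give that argument. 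The paper instead proceeds constructively: given $0\to\Omega_{R^\#}(X_1)\to M\oplus M'\to\Omega_{R^\#}(X_2)\to 0$, it uses $\Ext^1_{R^\#}(\Omega_{R^\#}(X_2),F_1)=0$ to extend $\Omega_{R^\#}(X_1)\hookrightarrow F_1$ across the middle, and then exhibits $M\oplus M'$ as $\Omega_{R^\#}(X)$ for an explicit extension $X$ of $X_2$ by $X_1$ with $y^kX=0$.
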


\begin{proof}
  First let $M \in \Sigma_k$, say $M \oplus M' = \Omega_{R^\#}(X)$ with $X$ an
  $R^\#$-module annihilated by $y^k$. Then we have a short exact sequence
  \begin{equation}\label{eq:nonMCM}
    0 \to y^j X \to X \to X/y^j X \to 0\,,
  \end{equation}
  in which the leftmost, resp.\ rightmost, term is annihilated by $y^{k-j}$, resp.\
  $y^j$. Observe, however, that neither of these need be MCM over $R^\#/(y^{k-j})$,
  resp.\ $R^\#/(y^{j})$.  Fix an odd integer $m > d$, and apply $\Omega_{R^\#}^m$ to
  \eqref{eq:nonMCM}, obtaining
  \begin{equation}\label{eq:nowMCM}
    0 \to \Omega_{R^\#}^m(y^j X) \to \Omega_{R^\#}^m(X) \oplus F \to
    \Omega_{R^\#}^m(X/y^j X) \to 0 
  \end{equation}
  for some free $R^\#$-module $F$.  Since $m$ is odd, the middle term is isomorphic
  to $\Omega_{R^\#}^1(X) \oplus F \cong M \oplus M' \oplus F$. By
  Lemma~\ref{lemma:doubleSyzygy}, since powers of $y$ are regular in $R^\#$, we may rewrite the outer terms as the first
  syzygies, over $R^\#$, of $\Omega_{R^\#/(y^{k-j})}^{m-1}(y^j X)$ and
  $\Omega_{R^\#/(y^{j})}^{m-1}(X/y^j X)$, respectively. For $m$ large enough, these
  are both MCM over the appropriate quotient of $R^\#$, and it follows that
  $\Omega^m_{R^\#}(y^j X) \in \Sigma_{k-j}$ and $\Omega^m_{R^\#}(X/y^j X) \in
  \Sigma_{j}$. Hence $M \in \Sigma_{k-j} \diamond \Sigma_{j}$.

  For the other containment, it suffices to complete a diagram of the form
  \[
    \xymatrix{
      & 0  \ar[d]    & & 0  \ar[d]\\
      0 \ar[r] & \Omega_{R^\#}(X_1) \ar[r] \ar[d] & M\oplus M' \ar[r] &
      \Omega_{R^\#}(X_2) \ar[r]
      \ar[d] & 0 \\
      & F_1 \ar[d] & & F_2 \ar[d] \\
      & X_1 \ar[d] & & X_2 \ar[d] \\
      & 0 & & 0 }
  \]
  where $y^{k-j}X_1 = y^j X_2 = 0$ and $F_1$, $F_2$ are free $R^{\#}$-modules.  Apply
  $\Hom_{R^\#}(-,F_1)$ to the top row, obtaining 
  \[
  \cdots \to \Hom_{R^\#}(M\oplus M',F_1) \to \Hom_{R^\#}(\Omega_{R^\#}(X_1),F_1) \to
  \Ext_{R^\#}^1(\Omega_{R^\#}(X_2),F_1) \,.
  \]
  Since $\Omega_{R^\#}(X_2)$ is MCM and $F_1$ is free,
  $\Ext_{R^\#}^1(\Omega_{R^\#}(X_2),F_1)=0$. This implies that the map
  $\Omega_{R^\#}(X_1) \to F_1$ extends to a map $M \oplus M' \to F_1$, and we therefore obtain
  a commutative diagram of exact sequences
  \[
    \xymatrix{ 0 \ar[r] & \Omega_{R^\#}(X_1) \ar[r] \ar[d] & M \oplus M'\ar[r] \ar[d]
      & \Omega_{R^\#}(X_2) \ar[r]
      \ar[d] & 0 \\
      0 \ar[r] & F_1 \ar[r] & F_1 \oplus F_2 \ar[r] & F_2 \ar[r] & 0 \,.  }
  \]
  Set $X = \cok (M\oplus M' \to F_1\oplus F_2)$. Then of course
  $M \oplus M' = \Omega_{R^\#}(X)$ and we have a short exact sequence
  $0 \to X_1 \to X \to X_2 \to 0$. Since $y^{k-j}X_1=0$ and $y^jX_2 =0$, we have
  $y^k X=0$, as required.
\end{proof}

We now extend this result to iterated branched covers, as considered by O'Carroll and
Popescu in \cite{OCarroll-Popescu:2000JPAA}.  
 Let $S$ be a $d+1$-dimensional regular local
ring, and set $R = S/(f)$ and
\[
R_r = S[\![y_1,\ldots,y_r]\!]/(f+y_1^{a_1} + \cdots y_r^{a_r})
\] 
for positive integers $a_i \geq 2$.  Let $I_r$ be the ideal
$(y_1, \ldots, y_r)R_r$, and define
\[
  \Sigma^r_j := \add \{\Omega^r_{R_r}(X) \ |\ X \in \MCM({R_r}/I_r^j)\} \,,.
\]
Note that $y_1, \dots, y_r$ is a regular sequence on $R_r$, so that $R_r/I_r^j$ is a
CM ring of dimension $d$ for any $j$ (by, for example, Eagon-Northcott~\cite[Theorem
2]{Eagon-Northcott}). It follows that any $X \in \MCM({R_r}/I_r^j)$ has depth $d$ as
an $R_r$-module, and thus that $\Sigma_j^r \subseteq \MCM(R_r)$.

The argument outlined in Remark~\ref{rmk:Ragnar} has been extended by
Takahashi~\cite{Takahashi:2010} to recover the following result of O'Carroll and
Popescu~\cite{OCarroll-Popescu:2000JPAA}: 
\begin{prop}\label{prop:takahashi}
  Assume that each $a_i$ is invertible in $S$.  Then for any MCM $R_r$-module
  $N$, we have
  \begin{equation}
    \Omega_{R_r}^r (N/(y_1^{a_1-1}, \dots, y_r^{a_r-1})N) 
    \cong \bigoplus_{j=0}^r \Omega_{R_r}^j (N)^{r \choose j}\,.\tag*{\qed} 
  \end{equation}
\end{prop}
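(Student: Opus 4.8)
The plan is to combine the single-variable statement, Proposition~\ref{prop:newHP}, with an inductive application of Lemma~\ref{lemma:doubleSyzygy}, peeling off one branch variable $y_i$ at a time. First I would set up notation: write $R_r = R_{r-1}[\![y_r]\!]/(f + y_1^{a_1} + \cdots + y_r^{a_r})$, so that $R_r$ is the $a_r$-fold branched cover of the hypersurface $R_{r-1}$ (where I temporarily regard $R_{r-1}$ as the ring cut out by $f + y_1^{a_1} + \cdots + y_{r-1}^{a_{r-1}}$ inside the regular ring $S[\![y_1,\dots,y_{r-1}]\!]$). Since $a_r$ is invertible in $S$, Proposition~\ref{prop:newHP} applies to the extension $R_{r-1} \to R_r$: for any MCM $R_r$-module $N$ we have a splitting $\Omega_{R_r}(N/y_r^{a_r-1}N) \cong N \oplus \Omega_{R_r}(N)$. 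This is the base case of the induction. The hard part will be bookkeeping the passage between syzygies over $R_r$, syzygies over the various quotients $R_r/(y_1,\dots,y_i)$, and the module $N/(y_1^{a_1-1},\dots,y_r^{a_r-1})N$, keeping the binomial coefficients straight.

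The inductive strategy is as follows. Suppose the formula holds for $r-1$ branch variables. Given a MCM $R_r$-module $N$, set $\bar N = N/y_r^{a_r-1}N$. The base case gives an exact sequence, in fact a split one after one syzygy: $\Omega_{R_r}(\bar N) \cong N \oplus \Omega_{R_r}(N)$. Now I want to apply the $(r-1)$-variable hypothesis to the pieces, but those pieces live over $R_r$, not over $R_{r-1}$. The key reduction is that $R_r$ is itself a hypersurface over $S[\![y_1,\dots,y_r]\!]$, hence MCM modules over it are $2$-periodic, and more importantly $y_r$ acts as a non-zerodivisor; the quotient $R_r/(y_r)$ is exactly $R_{r-1}[\![y_r]\!]/(y_r, \dots) = S[\![y_1,\dots,y_{r-1}]\!][\![y_r]\!]/(f+y_1^{a_1}+\cdots+y_{r-1}^{a_{r-1}}, y_r)$, which one identifies with $R_{r-1}/(\text{nothing})$ tensored appropriately — more precisely with $A$-type quotients in the sense of the earlier sections. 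I would instead iterate Proposition~\ref{prop:newHP} directly: apply it once for each variable $y_1, \dots, y_r$ in turn, at each stage to the branched-cover structure $R_{i-1}\to R_i$ (extended trivially by the remaining variables $y_{i+1},\dots,y_r$, which does not affect the differential-computation since $\partial_{y_i}(f + \sum y_j^{a_j}) = a_i y_i^{a_i-1}$ regardless). Each application replaces a quotient by $y_i^{a_i-1}$ with a direct sum of the module and its $R_i$-syzygy; Lemma~\ref{lemma:doubleSyzygy} then lets me convert an $R_i$-syzygy into an $R_r$-syzygy at the cost of free summands, since $R_r/(y_{i+1},\dots,y_r)$ has finite projective dimension over $R_r$ (the $y_j$ form a regular sequence). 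Carrying out all $r$ applications produces, after taking $\Omega^r_{R_r}$ of the total quotient, a direct sum indexed by subsets of $\{1,\dots,r\}$ of terms $\Omega^{|T|}_{R_r}(N)$ — one for each choice of which variables contributed a syzygy — and collecting by $|T| = j$ yields exactly $\bigoplus_{j=0}^r \Omega_{R_r}^j(N)^{\binom{r}{j}}$.

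The step I expect to be the main obstacle is verifying that the iterated quotient $N/(y_1^{a_1-1},\dots,y_r^{a_r-1})N$ is genuinely what appears after applying the single-variable splitting $r$ times in succession — that is, that the order of the operations does not matter and that the intermediate modules ($N/y_r^{a_r-1}N$ and its syzygies modulo $y_{r-1}^{a_{r-1}}$, etc.) line up with a single honest syzygy $\Omega^r_{R_r}$ of the full quotient, rather than with some tangle of syzygies of syzygies. This is precisely where Lemma~\ref{lemma:doubleSyzygy} (in the form applicable to ideals $I^j$, as remarked after its proof) does the work: it guarantees that $\Omega^{r}_{R_r}$ applied to a module annihilated by a power of $I_r$ agrees, up to stable isomorphism, with the appropriately iterated syzygy over the quotient rings, so all the rearrangements are legitimate modulo projectives. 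Once that compatibility is in place, the combinatorics — summing $\binom{r}{j}$ copies of $\Omega^j$ — is routine, and one appeals to Krull–Remak–Schmidt (over the complete, hence Henselian, ring $R_r$) to cancel the spurious free summands and upgrade "stably isomorphic" to "isomorphic" in the final formula.
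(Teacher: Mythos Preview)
Your proposal is correct and follows essentially the same route the paper sketches: ingredient~$(i)$ is exactly the annihilation of $\Ext_{R_r}^1(N,\Omega_{R_r}(N))$ by the Jacobian ideal $(y_1^{a_1-1},\dots,y_r^{a_r-1})$ (which you access via Proposition~\ref{prop:newHP} applied to each branch variable), and ingredient~$(ii)$ is the induction on $r$ using Lemma~\ref{lemma:doubleSyzygy} to move between syzygies over $R_r$ and over the intermediate quotients. One minor correction: you invoke Krull--Remak--Schmidt ``over the complete, hence Henselian, ring $R_r$'' to cancel free summands, but the paper does not assume $S$ is complete; fortunately cancellation of free summands holds over any local ring by Nakayama, so your cleanup step still goes through.
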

The necessary ingredients in proving Proposition~\ref{prop:takahashi} are $(i)$ to
observe that the Jacobian ideal $(y_1^{a_1-1}, \dots, y_r^{a_r-1})$ annihilates
$\Ext_{R_r}^1(N, \Omega_{R_r}(N))$, and $(ii)$ to induct over the number
$r$ of new variables by applying Lemma~\ref{lemma:doubleSyzygy}. We omit the details.

We set $m := \sum_{i=1}^r(a_i-2) + 1$, so that
$I_r^m \subseteq (y_1^{a_1-1}, \ldots, y_r^{a_r-1})$. By
Proposition~\ref{prop:takahashi}, every MCM $R_r$-module $N$ is a direct
summand of $\Omega_{R_r}^r(N/(y_1^{a_1-1}, \ldots, y_r^{a_r-1})N)$.  Since
$N/(y_1^{a_1-1}, \ldots, y_r^{a_r-1})N$ is a MCM $R_r/I_r^m$-module, this yields
$\Sigma^r_m = \MCM(R_r)$.

\begin{theorem}\label{thm:newSigmaStarSigma} For each $1 \leq j < k$ we have
  $\Sigma^r_k = \Sigma^r_{k-j} \diamond \Sigma^r_j = \gen{\Sigma^r_1}_{k-1}$.  In
  particular, $\MCM(R_r) = \Sigma^r_{m} = \gen{\Sigma^r_1}_{m-1}$.
\end{theorem}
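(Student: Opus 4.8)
The plan is to prove the two inclusions $\Sigma^r_k\subseteq\Sigma^r_{k-j}\diamond\Sigma^r_j$ and $\Sigma^r_{k-j}\diamond\Sigma^r_j\subseteq\Sigma^r_k$, and then to deduce the identities with $\gen{\Sigma^r_1}_{k-1}$ by induction on $k$, using the standard fact that $\gen{\cali}_a\diamond\gen{\cali}_b=\gen{\cali}_{a+b+1}$. Before starting I would record the routine facts that, for each $i$, $\Sigma^r_i$ contains every free $R_r$-module (since the quotients $R_r/I_r^i$ have projective dimension $r$ over $R_r$ by the remark after Lemma~\ref{lemma:doubleSyzygy}, their $r$-th syzygy $\Omega^r_{R_r}(R_r/I_r^i)$ is free), is closed under isomorphisms, finite direct sums and summands by definition, is closed under the syzygy operator $\Omega_{R_r}$ (rewrite $\Omega^{r+1}_{R_r}(X)$ as $\Omega^r_{R_r}(\Omega_{R_r/I_r^i}(X))$ up to projectives via Lemma~\ref{lemma:doubleSyzygy}, and $\Omega_{R_r/I_r^i}(X)$ is again MCM over $R_r/I_r^i$), and hence---since $R_r$ is a hypersurface, so that $\Omega^{-1}_{R_r}\cong\Omega_{R_r}$ on $\uMCM(R_r)$---closed under shifts; in particular $\Sigma^r_1=\gen{\Sigma^r_1}_0$. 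With the two inclusions in hand, induction on $k$ then gives $\Sigma^r_k=\gen{\Sigma^r_1}_{k-1}=\Sigma^r_{k-j}\diamond\Sigma^r_j$ for all $1\le j<k$; and since $\Sigma^r_m=\MCM(R_r)$ has already been established, the \emph{first} inclusion applied with $k=m$ already squeezes $\MCM(R_r)=\Sigma^r_m\subseteq\gen{\Sigma^r_1}_{m-1}\subseteq\MCM(R_r)$, so the last assertion of the theorem needs only that first inclusion.

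For the first inclusion, suppose $M\in\Sigma^r_k$, say $M\oplus M'=\Omega^r_{R_r}(X)$ with $X\in\MCM(R_r/I_r^k)$. Consider $0\to I_r^jX\to X\to X/I_r^jX\to 0$, whose kernel is annihilated by $I_r^{k-j}$ and whose cokernel is annihilated by $I_r^j$. Fix $N\ge r$ with $N\equiv r\pmod 2$ and apply $\Omega^N_{R_r}$, obtaining a short exact sequence $0\to\Omega^N_{R_r}(I_r^jX)\to\Omega^N_{R_r}(X)\oplus F\to\Omega^N_{R_r}(X/I_r^jX)\to 0$ with $F$ free. Because $\Omega^r_{R_r}(X)$ is MCM and $R_r$ is a hypersurface, the middle term is $\Omega^r_{R_r}(X)\oplus F\cong M\oplus M'\oplus F$; and by Lemma~\ref{lemma:doubleSyzygy} (the quotients $R_r/I_r^\bullet$ have projective dimension $r$ over $R_r$) the two outer terms are, up to projective summands, $\Omega^r_{R_r}$ of sufficiently high $R_r/I_r^{k-j}$- respectively $R_r/I_r^j$-syzygies, which for $N$ large are MCM over the relevant quotient; absorbing the projective summands (which lie in every $\Sigma^r_\bullet$), the outer terms lie in $\Sigma^r_{k-j}$ and $\Sigma^r_j$. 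By the description of $*$ in $\uMCM(R_r)$ recalled in this section, this short exact sequence exhibits $M\oplus M'$ in $\Sigma^r_{k-j}*\Sigma^r_j$, hence $M$ in $\Sigma^r_{k-j}\diamond\Sigma^r_j$.

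For the reverse inclusion $\Sigma^r_{k-j}\diamond\Sigma^r_j\subseteq\Sigma^r_k$ I would follow the diagram-completion argument of Proposition~\ref{prop:SigmastarSigma}: it suffices to show that a short exact sequence $0\to A\to E\to B\to 0$ of MCM $R_r$-modules with $A\in\Sigma^r_{k-j}$ and $B\in\Sigma^r_j$ forces $E\in\Sigma^r_k$, and, adding complementary summands, we may assume $A=\Omega^r_{R_r}(X_1)$, $B=\Omega^r_{R_r}(X_2)$ with $X_1\in\MCM(R_r/I_r^{k-j})$ and $X_2\in\MCM(R_r/I_r^j)$. One then tries to build a commutative ladder whose outer columns are the first $r$ steps of the $R_r$-projective resolutions $0\to\Omega^r_{R_r}(X_i)\to P^{(i)}_{r-1}\to\cdots\to P^{(i)}_0\to X_i\to 0$ and whose middle column $0\to E\to P^{(1)}_{r-1}\oplus P^{(2)}_{r-1}\to\cdots\to P^{(1)}_0\oplus P^{(2)}_0\to Z\to 0$ is exact, with all rows (split) short exact; then $Z$ is an extension of $X_2$ by $X_1$, hence annihilated by $I_r^{k-j}I_r^j=I_r^k$ and MCM over $R_r/I_r^k$, while exactness of the middle column identifies $E$ with $\Omega^r_{R_r}(Z)\in\Sigma^r_k$. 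The first step of the ladder---extending $\Omega^r_{R_r}(X_1)\hookrightarrow P^{(1)}_{r-1}$ over $E$---is unobstructed because $\Omega^r_{R_r}(X_2)$ is MCM, $P^{(1)}_{r-1}$ is free, and $R_r$ is Gorenstein, so $\Ext^1_{R_r}(\Omega^r_{R_r}(X_2),P^{(1)}_{r-1})=0$; for $r=1$ this is the entire construction and recovers Proposition~\ref{prop:SigmastarSigma} verbatim. The main obstacle I foresee is that for $r\ge 2$ the lifts at the lower stages of the ladder are governed by $\Ext^1_{R_r}$ of the \emph{intermediate} syzygies $\Omega^{r-t}_{R_r}(X_2)$---which have depth $d+r-t<d+r$ and so are not MCM---against free modules, and these groups need not vanish, so the horseshoe-type construction does not run automatically. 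I would expect to circumvent this by exploiting the freedom in the earlier lifts (each is unique only up to a homomorphism into a free module) to kill the successive obstruction classes, together with the fact, noted above, that $\Sigma^r_\bullet$ is closed under $R_r$-syzygies, which permits replacing $X_1$ and $X_2$ by high syzygies over $R_r/I_r^{k-j}$ and $R_r/I_r^j$ before the ladder is begun; failing that, by an induction on $r$ that peels off one variable at a time via $R_r/(y_r)\cong R_{r-1}$ and Lemma~\ref{lemma:doubleSyzygy}. Once the ladder exists, combining with the induction of the first paragraph yields all three stated equalities and, with $k=m$, the final assertion $\MCM(R_r)=\Sigma^r_m=\gen{\Sigma^r_1}_{m-1}$.
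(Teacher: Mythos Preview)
Your first inclusion $\Sigma^r_k\subseteq\Sigma^r_{k-j}\diamond\Sigma^r_j$ is correct and is exactly the paper's Step~1. Your preliminary observation that $\Sigma^r_i$ is closed under $\Omega_{R_r}$ directly via Lemma~\ref{lemma:doubleSyzygy} (rewriting $\Omega^{r+1}_{R_r}(X)\cong\Omega^r_{R_r}(\Omega_{R_r/I_r^i}X)$ and noting $\Omega_{R_r/I_r^i}X$ is again MCM) is valid and in fact slightly cleaner than the paper's argument, which postpones this to Step~6 and proves it by induction on~$r$.

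The genuine gap is in the reverse inclusion. You correctly diagnose the obstacle: for $r\geq 2$ the $r$-step ladder is obstructed by groups $\Ext^1_{R_r}(\Omega^{r-t}_{R_r}(X_2),P)$ with $P$ free, and since $X_2$ typically has infinite projective dimension over $R_r$ these do not vanish. Your first proposed fix, adjusting earlier lifts to kill obstruction classes, does not work in general: modifying a lift at stage $t$ changes the next obstruction only by something in the image of a connecting map, and there is no reason that image should be all of the obstruction group. Your second suggestion, induction on $r$ peeling off one variable, is the right one and is what the paper does, but carrying it out requires an ingredient you have not identified.

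The missing idea is the identity $\Sigma^r_j=\Omega_{R_r}(\Sigma^{r-1}_j)$ for all $r\geq 2$ and $j\geq 1$, where $R_{r-1}$ is viewed as $R_r/(y_r)$. The containment $\supseteq$ is immediate from Lemma~\ref{lemma:doubleSyzygy}; the containment $\subseteq$ is proved by a \emph{double} induction on $r$ and $j$, with Proposition~\ref{prop:SigmastarSigma} serving as the $r=1$ base. The payoff is that once $N\in\Sigma^r_{k-j}$ and $N'\in\Sigma^r_j$ are rewritten as $\Omega_{R_r}(N_0)$ and $\Omega_{R_r}(N'_0)$ with $N_0\in\Sigma^{r-1}_{k-j}$ and $N'_0\in\Sigma^{r-1}_j$, only a \emph{one}-step ladder is needed to descend the extension $0\to N\to E\to N'\to 0$ to an extension of $N'_0$ by $N_0$ in $\MCM(R_{r-1})$; that single step is unobstructed because $N'=\Omega_{R_r}(N'_0)$ is MCM over $R_r$. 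The inductive hypothesis on $r$ then gives $E\in\Omega_{R_r}(\Sigma^{r-1}_k)\subseteq\Sigma^r_k$. So the induction on $r$ is not a vague fallback but the actual mechanism, and it hinges on this identity.
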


\begin{proof}
  Step 1.  We first show the inclusions
  $\Sigma^r_k \subseteq \Sigma^r_{k-j} \diamond \Sigma^r_j \subseteq
  \gen{\Sigma^r_1}_{k-1}$.  The argument is similar to the first part of the proof of Proposition~\ref{prop:SigmastarSigma}.  Let $M \in \Sigma^r_k$, say
  $M\oplus M' = \Omega^r_{R_r}(X)$ with $X$ an $R_r$-module annihilated by
  $I_r^k$. Then, for any even integer $n>d$, the short exact sequence $0 \to I_r^j X \to X \to X/I_r^j X \to 0$
  induces a short exact sequence
  $0 \to \Omega^{r+n}_{R_r}(I_r^j X) \to M\oplus M' \oplus F \to \Omega^{r+n}_{R_r}(X/I_r^j
  X) \to 0$ for some free module $F$. By Lemma~\ref{lemma:doubleSyzygy}, $\Omega^{r+n}_{R_r}(X/I_r^j X)$ is stably isomorphic to $\Omega^r_{R_r}(\Omega^n_{R_r/I_r^j} (X/I_r^j X))$ with $\Omega^n_{R_r/I_r^j} (X/I_r^j X)$ MCM over $R_r/I_r^j$.  Similarly,  $\Omega^{r+n}_{R_r}(I_r^j X)$ is stably isomorphic to $\Omega^r_{R_r}(\Omega^n_{R_r/I_r^{k-j}} (I_r^j X))$ with $\Omega^n_{R_r/I_r^{k-j}} (I_r^j X)$ MCM over $R_r/I_r^{k-j}.$  In particular,
  $M \in \Sigma^r_{k-j} \diamond \Sigma^r_{j}$.  The second inclusion now follows by
  definition of $\gen{\Sigma^r_1}_{k-1}$ and induction on $k$.

  \vspace{3mm} Step 2.  We show that
  $\Omega_{R_r}(\Sigma^{r-1}_i) \subseteq \Sigma^r_i$ for each $r \geq 2$ and
  $i \geq 1$.  Let $M \in \Sigma^{r-1}_i$, so that
  $M \oplus M' \cong \Omega^{r-1}_{R_{r-1}}(X)$ for some $M' \in \MCM(R_{r-1})$ and
  $X \in \MCM(R_{r-1}/I_{r-1}^i)$.  Then
  \[
    \Omega_{R_r}M \oplus \Omega_{R_r}M' \cong \Omega_{R_r}(\Omega^{r-1}_{R_{r-1}}(X))
    \cong  \Omega^r_{R_r}(X)\,,
  \]
  where the last isomorphism holds up to free summands by
  Lemma~\ref{lemma:doubleSyzygy}.  Thus $\Omega_{R_r}M$ belongs to $\Sigma^r_i$ since
  $X$ may also be viewed as a MCM module over $R_{r}/I_r^i$.

  \vspace{3mm} Step 3.  We show that $\Sigma_1^r = \Omega_{R_r}(\Sigma_1^{r-1})$ for
  each $r \geq 2$.  Let $M \in \Sigma_1^r$.  Then $M \oplus M' \cong \Omega^r_{R_r}X$
  for a MCM $R_r/I_r$-module $X$.  But $R_r/I_r \cong R \cong R_{r-1}/I_{r-1}$, so
  $X$ is an $R_{r-1}$-module and $\Omega^{r-1}_{R_{r-1}}X \in \Sigma^{r-1}_1$.  Thus,
  up to free summands,
  $M \oplus M' \cong \Omega_{R_r}(\Omega^{r-1}_{R_{r-1}}X) \in
  \Omega_{R_r}(\Sigma^{r-1}_1)$ as required.

  \vspace{3mm} Step 4.  For each $j \geq 1$, consider the following two statements:
  \begin{equation}\label{eqn:S1}
    \forall\ r \geq 2 \ \ \Sigma^r_j = \Omega_{R_r}(\Sigma^{r-1}_j)\,,
  \end{equation}
  and
  \begin{equation}\label{eqn:S2}
    \forall\ r \geq 1 \ \ \add(\Sigma^r_1 * \Sigma^r_j) = \Sigma^r_{j+1}\,.
  \end{equation}
  For any fixed $j \geq 1$, we show that \eqref{eqn:S1} implies \eqref{eqn:S2}.  To
  see this, we apply induction on $r$, noting that the $r=1$ case of \eqref{eqn:S2}
  is handled by Proposition~\ref{prop:SigmastarSigma}.  By Step 1, it suffices to
  show $\add(\Sigma^r_1 * \Sigma^r_j) \subseteq \Sigma^r_{j+1}$.  Let $r \geq 2$.
  For any $M \in \add(\Sigma^r_1 * \Sigma^r_j)$, we have an extension
  $0 \to N \to M \oplus M' \to N' \to 0$ for
  $N \in \Sigma^r_1= \Omega_{R_r}(\Sigma^{r-1}_1)$ and
  $N' \in \Sigma^r_j = \Omega_{R_r}(\Sigma^{r-1}_j)$.  Now, as in the second half of
  the proof of Proposition~\ref{prop:SigmastarSigma}, we obtain
  $M \in \Omega_{R_r}(\Sigma^{r-1}_1 * \Sigma^{r-1}_j) =
  \Omega_{R_r}(\Sigma^{r-1}_{j+1}) \subseteq \Sigma^r_{j+1}$, where we have used the
  inductive hypothesis and the result of Step 2.

  \vspace{3mm} Step 5.  We now prove \eqref{eqn:S1}, and hence also \eqref{eqn:S2},
  for all $j \geq 1$ by induction on $j$.  For $j=1$, \eqref{eqn:S1} was established
  in Step 3.  Now, assume $\eqref{eqn:S1}$ holds for some $j \geq 1$, and let
  $M \in \Sigma^r_{j+1} = \add(\Sigma^r_1 * \Sigma^r_j)$.  The argument in the
  previous step shows that $M \in \Omega_{R_r}(\Sigma^{r-1}_{j+1})$, thus
  establishing (\ref{eqn:S1}) for $j+1$ with the help of Step 2.

  \vspace{3mm} Step 6.  We finally show that $\Sigma^r_j = \gen{\Sigma^r_1}_{j-1}$
  for all $r, j \geq 1$.  It then follows that
  $\Sigma^r_k = \Sigma^r_{k-j} \diamond \Sigma^r_j$ for all $1 \leq j < k$.  By Step
  1, it suffices to show that $\gen{\Sigma^r_1}_{j-1} \subseteq \Sigma^r_j$.  For
  $j=1$, this amounts to saying that $\Sigma^r_1$ is closed under syzygies.  For
  $r=1$, we know that $\Sigma^r_j$ is closed under syzygies from
  Corollary~\ref{cor:splitApprox}.  By induction on $r$, using (\ref{eqn:S1}) and
  Lemma~\ref{lemma:doubleSyzygy}, we see that all $\Sigma^r_j$ are in fact closed
  under syzygies.  Now using induction on $j$, we have
  \[
    \gen{\Sigma^r_1}_{j} = \gen{\Sigma^r_1 * \gen{\Sigma^r_1}_{j-1}} = \gen{\Sigma_1^r *
      \Sigma^r_j} = \gen{\Sigma^r_{j+1}} = \Sigma^r_{j+1}\,.
  \]
\end{proof}

\begin{corollary}\label{cor:findim}
  With notation as above, $\dim (\MCM(R_r)) \leq \sum_{i=1}^r(a_i-2)$ whenever $R$
  has finite CM-type.
\end{corollary}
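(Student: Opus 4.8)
The plan is to deduce the bound directly from Theorem~\ref{thm:newSigmaStarSigma}, the only additional ingredient being that finite CM-type forces $\Sigma^r_1$ to be the subcategory generated, in zero steps, by a single module $G$. Thus the work splits into identifying the base ring $R_r/I_r$, producing such a $G$, and checking that replacing $\Sigma^r_1$ by $G$ costs no extra term in the filtration.

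First I would record that killing $y_1,\dots,y_r$ in $R_r$ reduces the relation $f+y_1^{a_1}+\cdots+y_r^{a_r}$ to $f$, so that $R_r/I_r \cong S/(f) = R$, and hence
\[
\Sigma^r_1 = \add\{\Omega^r_{R_r}(X) \mid X \in \MCM(R)\}\,.
\]
Now assume $R$ has finite CM-type and let $M_1,\dots,M_t$ be a complete list of the indecomposable MCM $R$-modules. Since $R_r$ is local, direct sums of (minimal) free resolutions are again resolutions, so $\Omega^r_{R_r}$ commutes with finite direct sums up to free summands; setting $G = R_r \oplus \bigoplus_{i=1}^t \Omega^r_{R_r}(M_i)$ we therefore get $\Sigma^r_1 = \add(G)$, both as a subcategory of $\MCM(R_r)$ and after passing to the stable category.

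Next I would invoke Theorem~\ref{thm:newSigmaStarSigma} with $j=1$, which gives $\Sigma^r_1 = \gen{\Sigma^r_1}_0 = \gen{\Sigma^r_1}$; in particular $\Sigma^r_1$ is already closed under shifts, finite direct sums, and direct summands inside the stable category $\MCM(R_r)$. Combined with $\Sigma^r_1 = \add(G)$, this yields $\Sigma^r_1 = \gen{G} = \gen{G}_0$. Then, using the theorem's conclusion $\MCM(R_r) = \gen{\Sigma^r_1}_{m-1}$ with $m = \sum_{i=1}^r(a_i-2)+1$, and the immediate consequence of the inductive definition of $\gen{-}_n$ that $\gen{\gen{G}_0}_{m-1} = \gen{G}_{m-1}$ (since $\gen{G}_0$ is already closed under the relevant operations), I obtain $\MCM(R_r) = \gen{G}_{m-1}$. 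By Definition~\ref{def:DimTriCat} this forces $\dim\MCM(R_r) \le m-1 = \sum_{i=1}^r(a_i-2)$.

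The substantive content is entirely carried by Theorem~\ref{thm:newSigmaStarSigma}; the only point demanding care here is the bookkeeping around the step-count, namely verifying that the passage from the subcategory $\Sigma^r_1$ to a single generating object $G$ incurs no additional $\diamond$-step. This is exactly what the finite-CM-type hypothesis buys us — finitely many indecomposables in $\MCM(R)$, hence in $\Sigma^r_1$ — together with the closure properties of $\Sigma^r_1$ already established in the proof of Theorem~\ref{thm:newSigmaStarSigma}. I do not expect any genuine obstacle beyond this.
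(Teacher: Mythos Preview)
Your proof is correct and is exactly the argument the paper intends: the corollary is stated without proof, as an immediate consequence of Theorem~\ref{thm:newSigmaStarSigma} together with the observation that finite CM-type of $R$ collapses $\Sigma^r_1$ to $\add(G)$ for a single object $G$. Your bookkeeping on shift-closure (via $\Sigma^r_1 = \gen{\Sigma^r_1}_0$, established in Step~6 of the theorem's proof) is the right way to justify that no extra $\diamond$-step is incurred.
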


\begin{remark}
  We emphasize again that the Corollary above requires $f \neq 0$. Indeed, it is
  known that the category of MCM modules over the $A_\infty$ singularity
  $k[\![x,y]\!]/(y^2)$ has dimension 1~\cite[Proposition
  3.7]{Dao-Takahashi:dimension}. 
\end{remark}

\begin{question}  We have shown that $\MCM(R_r) = \Sigma^r_{m} = \gen{\Sigma^r_1}_{m-1}$ and $\gen{\Sigma^r_1}_{m-2} = \Sigma^r_{m-1}$.  But we do not know if  $m-1$ is the smallest number of steps in which $\Sigma^r_1$ generates $\MCM(R_r)$, or equivalently whether $\Sigma^r_{m-1}$ must be a proper subset of $\Sigma^r_m = \MCM(R_r)$.  Properness here would follow from the existence of an MCM $R_r$-module $M$ for which $\Ext^1_{R_r}(M,-)$ is not annihilated by $\displaystyle y := \prod_{i=1}^r y_i^{a_i-2}$.  To see this, note that if such an $M$ is a direct summand of some $\Omega_{R_r}^r(X)$, then $\Ext^1_{R_r}(M,-)$ is isomorphic to a direct summand of $\Ext^{r+1}_{R_r}(X,-)$, and the latter must not be killed by $y$.  Since $y \in I_r^{m-1}$, $X$ is not annihilated by $I_r^{m-1}$ and thus $M \notin \Sigma^r_{m-1}$.
\end{question}

\section{Examples}
\label{sect:egs}

We illustrate the constructions in this paper in a couple of examples.  For
computational purposes it is convenient to work with matrix factorizations, on which
the necessary background can be found in Yoshino's book \cite{Yoshino:book}.  For a
power series $f$ contained in the maximal ideal of $S = k[\![\underline{x}]\!]$, we write $\MF_S(f)$
for the category of reduced matrix factorizations of $f$ over $R$ and recall that the
functor $\cok \colon \MF_R(f) \to \MCM(R/(f))$, sending a matrix factorization
$(\phi, \psi)$ to $\cok(\phi)$, induces equivalences of categories
$\MF_R(f)/[(1,f)] \approx \MCM(R/(f))$ and
$\MF_R(f)/[(1,f),(f,1)] \approx \uMCM(R/(f))$.  We also write
$\Omega(\phi,\psi) = (\psi,\phi)$.

We begin with a description of the functor
$\Omega_{R^\#}\colon \MCM(R) \rightarrow \MCM(R^\#)$ in terms of matrix
factorizations.  On this level, it turns out that this functor is a special case of
Yoshino's tensor product of matrix factorizations \cite{Yoshino:1998}.  Let
$R = k[\![x_0,\ldots,x_d]\!]$ and $R' = k[\![y_0,\ldots,y_{d'}]\!]$, and set
$S= k[\![x_0,\ldots,x_d,y_0,\ldots,y_{d'}]\!]$.  If $X = (\phi, \psi)$ and
$X' = (\phi',\psi')$ are matrix factorizations of $f \in R$ and $g \in R'$
respectively, of sizes $n$ and $m$, then Yoshino defines the matrix factorization
\[
  X \hat{\otimes} X' = \left( \begin{bmatrix} \phi \otimes I_m & I_n \otimes \phi' \\
      -I_n \otimes \psi' & \psi \otimes I_m \end{bmatrix}, \begin{bmatrix} \psi
      \otimes I_m & -I_n \otimes \phi' \\ I_n \otimes \psi' & \phi \otimes
      I_m \end{bmatrix}\right) \in \MF_S(f+g)\,.
\]
For a fixed $X'$, Yoshino shows that $-\hat{\otimes} X'\colon \MF_R(f) \to \MF_S(f+g)$ is
an exact functor that preserves trivial matrix factorizations, and behaves well with
respect to syzygies.  In particular,
$X \hat{\otimes} \Omega X' \cong \Omega(X \hat{\otimes} X') \cong \Omega X
\hat{\otimes} X'$.  In addition, if $X'$ is reduced, $-\hat{\otimes} X'$ is a faithful
functor (although, it is typically very far from being full).  We also have a
reduction functor $-\otimes_S S/(\underline{y})\colon \MF_S(f+g) \to \MF_R(f)$, which sends
$(\Phi, \Psi)$ to $(\Phi \otimes_S S/(\underline{y}), \Psi \otimes_S S/(\underline{y}))$.

We now specialize to the setting considered in this paper: $R' = k[\![y]\!], g = y^n$
and $Y = (y,y^{n-1})$.  Then the functor $-\hat{\otimes} Y$ sends an $m \times m$
matrix factorization $(\phi, \psi)$ of $f$ over $R$ to the $2m \times 2m$ matrix
factorization
\[
(\phi, \psi) \hat{\otimes} (y,y^{n-1}) = \left( \begin{bmatrix} \phi & yI_m \\
    -y^{n-1}I_m & \psi \end{bmatrix}, \begin{bmatrix} \psi & -yI_m \\ y^{n-1}I_m &
    \phi \end{bmatrix} \right)\,.
\]

\begin{prop}\label{prop:syzygyMF} Let $R = k[\![x_0,\ldots,x_d]\!]/(f)$ be an
  isolated hypersurface singularity and set
  $R^\# = k[\![x_0,\ldots,x_d,y]\!]/(f + y^n)$.  If $(\phi, \psi)$ is a reduced matrix
  factorization of $f$ corresponding to the MCM $R$-module $M = \cok (\phi, \psi)$,
  then we have $\Omega_{R^\#}(M) \cong \cok \Omega( (\phi,\psi) \hat{\otimes} Y))$.
\end{prop}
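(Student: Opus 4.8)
The plan is to write down an explicit minimal free resolution of $M$ over $R^\#$ and read off its first syzygy. Let $S = k[\![x_0,\dots,x_d]\!]$, so $R = S/(f)$, and let $\widehat S = S[\![y]\!] = k[\![x_0,\dots,x_d,y]\!]$, so $R^\# = \widehat S/(f+y^n)$ and $\widehat S/(y) = S$; let $m$ be the size of the reduced matrix factorization $(\phi,\psi)$, equivalently the minimal number of generators of $M$ over $R$, and regard $\phi,\psi\in M_m(\widehat S)$ (with entries in $S$). Since $M$ is annihilated by $y$ and is minimally $m$-generated over $R = R^\#/(y)$, it is minimally $m$-generated over $R^\#$ as well, so the natural map $p\colon(R^\#)^m\to M$ is a projective cover and $\Omega_{R^\#}(M)\cong\ker p$. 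Computing $\ker p$ directly — a vector $v\in(R^\#)^m$ lies in $\ker p$ precisely when its image in $R^m$ lies in $\im(\phi\colon R^m\to R^m) = \ker(R^m\to M)$ — identifies $\ker p$ with $\phi(R^\#)^m + y(R^\#)^m = \im\bigl([\,\phi\mid yI_m\,]\colon(R^\#)^{2m}\to(R^\#)^m\bigr)$.

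Next I would bring in $(\phi,\psi)\hat{\otimes} Y = (\Phi,\Psi)$, where $\Phi = \begin{bmatrix}\phi & yI_m\\ -y^{n-1}I_m & \psi\end{bmatrix}$ and $\Psi = \begin{bmatrix}\psi & -yI_m\\ y^{n-1}I_m & \phi\end{bmatrix}$. A direct multiplication gives $[\,\phi\mid yI_m\,]\cdot\Psi = [\,(f+y^n)I_m\mid 0\,]$, which vanishes over $R^\#$; hence $\im\Psi\subseteq\ker[\,\phi\mid yI_m\,]$, and $[\,\phi\mid yI_m\,]$ induces a surjection $\bar q\colon\cok\Psi = (R^\#)^{2m}/\im\Psi\twoheadrightarrow\im[\,\phi\mid yI_m\,]\cong\Omega_{R^\#}(M)$. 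The whole statement now comes down to showing that $\bar q$ is injective, equivalently that $\ker[\,\phi\mid yI_m\,]\subseteq\im\Psi$; this is the one step that is not purely formal, and I expect it to be the main obstacle.

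To prove the inclusion I would lift to $\widehat S$. Suppose $(w,u)\in(R^\#)^{2m}$ satisfies $\phi w + yu = 0$ in $R^\#$; lift $w,u$ to $\widetilde w,\widetilde u\in\widehat S^m$ and write $\phi\widetilde w + y\widetilde u = (f+y^n)\widetilde c$ for some $\widetilde c\in\widehat S^m$. Reducing modulo $y$ and using $\phi\psi = fI_m$ gives $\phi\bigl(\overline{\widetilde w} - \psi\,\overline{\widetilde c}\bigr) = 0$ in $S^m$; since $\det\phi\cdot\det\psi = f^m\neq 0$, the matrix $\phi$ is injective over the domain $S$, so $\widetilde w = \psi\widetilde c + y\widetilde d$ for some $\widetilde d\in\widehat S^m$. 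Substituting this back into $\phi\widetilde w + y\widetilde u = (f+y^n)\widetilde c$ and cancelling the nonzerodivisor $y$ gives $\widetilde u = y^{n-1}\widetilde c - \phi\widetilde d$, so, writing $c,d\in R^\#$ for the images of $\widetilde c,\widetilde d$, one reads off $(w,u) = \Psi\begin{bmatrix}c\\ -d\end{bmatrix}\in\im\Psi$. Hence $\bar q$ is an isomorphism, so $\Omega_{R^\#}(M)\cong\cok\Psi$; and since $\Omega\bigl((\phi,\psi)\hat{\otimes} Y\bigr) = \Omega(\Phi,\Psi) = (\Psi,\Phi)$, this is precisely $\cok\Omega\bigl((\phi,\psi)\hat{\otimes} Y\bigr)$, as claimed.

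An alternative route to the key step: both $\cok\Psi$ and $\Omega_{R^\#}(M)$ are maximal Cohen--Macaulay over the Cohen--Macaulay ring $R^\#$, so the epimorphism $\bar q$ must be an isomorphism as soon as one checks that the two modules have equal rank at each minimal prime of $R^\#$; but that rank computation is less transparent than the lifting argument, which moreover needs no hypothesis that $R^\#$ be reduced. As a consistency check, reducing $\Psi$ modulo $y$ yields $\diag(\bar\psi,\bar\phi)$, so $\cok\Psi/y\cok\Psi\cong\Omega_R(M)\oplus M$, compatibly with Lemma~\ref{lem:ses} and the splittings recorded in Section~\ref{sect:approx}.
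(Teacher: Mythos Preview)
Your argument is correct and is exactly the standard computation the paper is pointing to: the paper's own proof is just the one-line remark that the argument of \cite[Lemma 8.17]{Leuschke-Wiegand:BOOK} or \cite[Lemma 12.3]{Yoshino:book} for $n=2$ goes through verbatim for arbitrary $n$, and what you have written is precisely that argument, spelled out with $y^{n-1}$ in place of $y$. The key step --- lifting to $\widehat S$, using injectivity of $\phi$ over the domain $S$ to peel off a factor of $y$, and then cancelling $y$ --- is the same in both.
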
 

\begin{proof}
The proof given in~\cite[Lemma 8.17]{Leuschke-Wiegand:BOOK} or \cite[Lemma
12.3]{Yoshino:book} in the case $n=2$ applies equally well for arbitrary $n$.
\end{proof}

\begin{example}
  Consider $R = k[\![x]\!]/(x^4)$ and $R^\# = k[\![x,y]\!]/(x^4+y^3)$, which is a
  simple curve singularity of type $\EE_6$.  Of course $R$ is a finite-dimensional
  $k$-algebra of finite representation type, and $\lmod{R}$ is easily pictured via
  its Auslander-Reiten quiver
\[
  \xymatrix{R/(x) \ar[r]<0.5ex> & R/(x^2) \ar[r]<0.5ex> \ar[l]<0.5ex> & R/(x^3)
    \ar[r]<0.5ex> \ar[l]<0.5ex> & R \ar[l]<0.5ex>}\,.
\]

For reference we also provide the Auslander-Reiten quiver of $R^\#$, following the
notation of \cite{Yoshino:book}, Chapter 9.
\[
  \xymatrixrowsep{0.75pc} \xymatrix{ & & B \ar[dl]<0.5ex> \ar[r] \ar@{--}[dd] & M_1
    \ar[ddl] \ar@{--}[dd] \\ M_2 \ar[r]<0.5ex> \ar@{--}@(dl,dr) & X \ar@{--}@(dl,dr)
    \ar[ur]<0.5ex> \ar[dr]<0.5ex> \ar[l]<0.5ex> & & & R \ar[ul] \\ & & A
    \ar[ul]<0.5ex> \ar[r] & N_1 \ar[uul] \ar[ur]}
\]
We have
\[
  \Omega_{R^\#}(R/(x)) \cong \cok((x^3,x) \hat{\otimes} (y,y^2)) = \cok
  \left( \begin{bmatrix} x^3 & -y \\ y^2 & x \end{bmatrix}, \begin{bmatrix} x & y \\
      -y^2 & x^3 \end{bmatrix} \right) \cong \cok(\psi_1, \phi_1) \cong N_1\,;
\]
 and
\[
\Omega_{R^\#}(R/(x^3)) \cong \cok((x,x^3) \hat{\otimes} (y,y^2)) \cong \Omega(N_1)
\cong M_1\,.
\]
Furthermore

\[
  \Omega_{R^\#}(R/(x^2)) \cong \cok ((x^2,x^2) \hat{\otimes} (y,y^2)) = \cok
  \left( \begin{bmatrix} x^2 & -y \\ y^2 & x^2 \end{bmatrix}, \begin{bmatrix} x^2 & y
      \\ -y^2 & x^2 \end{bmatrix} \right) \cong \cok(\psi_2, \phi_2) \cong N_2 \cong
  M_2\,.
\]

Thus $\Sigma_1 = \add(M_1 \oplus N_1 \oplus M_2 \oplus R^\#)$.  We compute the
minimal $\Sigma_1$-approximations of the remaining indecomposables in $\MCM(R^\#)$.
From the matrix factorizations for $A$, $B$ and $X$ given in \cite{Yoshino:book} (with $x$ and $y$ swapped), it
is easy to see that $A/yA \cong (R/(x^3))^2 \oplus R/(x^2)$,
$B/yB \cong (R/(x))^2 \oplus R/(x^2)$ and
$X/yX \cong (R/(x^2))^2 \oplus R/(x) \oplus R/(x^3)$.  Applying $\Omega_{R^\#}$ now
yields the middle terms in the following short exact sequences, which are left and
right $\Sigma_1$-approximations of the modules on the ends:
\begin{align*}
& 0 \to B \to M_1^2 \oplus M_2 \to A \to 0 & \\
& 0 \to A \to N_1^2 \oplus M_2 \to B \to 0 &\\
& 0 \to X \to M_1 \oplus N_1 \oplus M_2^2 \to X \to 0&
\end{align*}

In light of Proposition~\ref{prop:SigmastarSigma}, there must also be short exact
sequences with end terms in $\Sigma_1$ and each of $B, A$ and $X$ as direct summands
of the middle term.  In fact these extensions are evidenced by the following almost
split sequences

\begin{align*}
& 0 \to M_1 \to A \to N_1 \to 0 & \\
& 0 \to N_1 \to B \oplus R^\# \to M_1 \to 0 & \\
& 0 \to M_2 \to X \to M_2 \to 0. & 
\end{align*}

Set $\tilde M = M_1 \oplus N_1 \oplus M_2 \oplus R^\#$.  The ring
$\Lambda = \End_{R^\#}(\tilde M)$ is isomorphic to the corner ring of the Auslander
algebra of $\MCM(R^\#)$ corresponding to the 4 vertices $M_1, N_1, M_2$ and $R^\#$ in
the AR-quiver above.  To describe this algebra explicitly in terms of quivers and
relations, we identify $R^\#$ with $k[\![t^3,t^4]\!]$, and note that all the
indecomposables in $\Sigma_1$ can be represented as fractional ideals (for example,
see \cite{Yoshino:book}):
\[
M_1 \cong (t^3,t^8), \ \ N_1 \cong (t^3, t^4), \ \ M_2 \cong (t^6,t^8)\,.
\]
The irreducible morphisms between the indecomposables in $\Sigma_1$ are easily seen to correspond to the following paths in the
AR-quiver of $\MCM(R^\#)$.  Moreover each is realized by multiplication by a suitable power of $t$.
\begin{eqnarray*} 
	t^5 & : & N_1 \to B \to X \to M_2, \\
 	t^4 & : & N_1 \to B \to X \to B \to M_1, \\
  	1 & : & N_1 \to R^\#, \\
   	t^3 & : & R^\# \to M_1, \\
   	t^3 & : &M_1 \to A \to X \to M_2,  \\
   	1 & : &M_1 \to A \to X \to A \to N_1, \\
    	t^{-2} & : & M_2 \to X \to A \to N_1, \\
    	1 & : &M_2 \to X \to B \to M_1, \\
     	t^2 & : & M_2 \to X \to A \to X \to M_2.
\end{eqnarray*}
It follows that $\Lambda$ can be described as a factor of the completed path algebra of the
following quiver\footnote{with the convention that arrows are composed left to right}

\[
  \xymatrix{& M_2 \ar@(ul,ur)^{t^2} \ar[dr]<0.5ex>^{t^{-2}} \ar[dl]<0.5ex>^1 \\ M_1
    \ar[ur]<0.5ex>^{t^3} \ar[rr]<0.5ex>^1 & & N_1 \ar[ul]<0.5ex>^{t^5}
    \ar[ll]<0.5ex>^{t^4} \ar[dl]^{1} \\ & R^\# \ar[ul]^{t^3}}
\]

While we don't list all the relations here, we note that they can be easily
identified from the above quiver, as they correspond to parallel paths that compose
to identical powers of $t$.  For instance, among the minimal relations we find the
difference between the paths $M_2 \stackrel{1}{\to} M_1 \stackrel{1}{\to} N_1$ and
$M_2 \stackrel{t^2}{\to} M_2 \stackrel{t^{-2}}{\to} N_1$ since both paths compose to
$1$, as well as the difference between $N_1 \to R^\# \to M_1 \to N_1$ and
$N_1 \to M_2 \to N_1$ since both compose to $t^3$.

Next, we apply results from Section 3 to describe the minimal projective resolutions
of the simple $\Lambda$-modules.  Recall that these will become periodic of period
$2$ after the first two terms (since $m = \max \{2,d+1\} = 2$ here).  If $S(i)$ is a
simple left $\Lambda$-module, corresponding to a vertex $i$ of the quiver of
$\Lambda$, its minimal projective presentation is given by
$\bigoplus_{\alpha : j \to i} P(j) \stackrel{\pi(i)}{\to} P(i) \to S(i) \to 0$, where
the sum ranges over all arrows $\alpha$ ending at $i$, and the $\alpha$ component of
$\pi(i)$ is just the map $\alpha \colon P(j) \to P(i)$.  In general, we can find a map
$d(i)$ between $X, Y \in \add(\tilde{M})$ so that $\pi(i)$ is realized as
$\Hom_{R^\#}(\tilde{M},d(i)) \colon \Hom_{R^\#}(\tilde{M},X) \to \Hom_{R^\#}(\tilde{M},Y)$
and $\ker d(i) \in \MCM(R^\#)$ with
$\Hom_{R^\#}(\tilde{M},\ker d(i)) \cong \Omega^2_{\Lambda}(S(i))$.  Furthermore the
$\add(\tilde{M})$-resolution of $\ker d(i)$ will induce the remaining terms of the
minimal projective resolution of $S(i)$ over $\Lambda$.  For example, for $S(N_1)$,
the minimal projective presentation has the form
$P(M_1) \oplus P(M_2) \stackrel{\pi(N_1)}{\to} P(N_1) \to S(N_1) \to 0$ where
$\pi(N_1)$ is the map induced by
$\begin{pmatrix} 1 \\ t^{-2} \end{pmatrix} : M_1 \oplus M_2 \to N_1$, whose kernel is
isomorphic to $B$.  Using the $\Sigma_1$-approximations of $B$ and $A$, we now obtain
the minimal projective resolution
\[
\cdots \to P(M_1)^2 \oplus P(M_2) \to P(N_1)^2 \oplus P(M_2) \to
P(M_1) \oplus P(M_2) \to P(N_1) \to S(N_1) \to 0\,.
\]
Similarly for $S(M_2)$, we compute $\Omega^2_{\Lambda}(S(M_2)) \cong \Hom_{R^\#}(\tilde{M},\ker d(M_2))$, where 
\[
d(M_2) = \begin{pmatrix} t^3 \\ t^5 \\ t^2 \end{pmatrix} : M_1 \oplus N_1 \oplus M_2
\to M_2\,.
\]
One can compute $\ker d(M_2) \cong X$.  Then using the $\Sigma_1$-approximation sequence for $X$, we will get the minimal projective resolution
\[
\begin{split}
\cdots \to & P(M_1) \oplus P(N_1) \oplus P(M_2)^2 \to P(M_1) \oplus P(N_1) \oplus
P(M_2)^2 \to \\ & \hspace{2cm} P(M_1) \oplus P(N_1) \oplus P(M_2) \to P(M_2) \to S(M_2) \to 0.
\end{split}
\]
\end{example}

\begin{example}
Similar computations can be made for the $\EE_8$ curve singularity
$R^{\#} = k[\![x,y]\!]/(x^5+y^3) \cong k[\![t^3, t^5]\!]$.  We now set
$R = k[x]/(x^5)$, and see that in the notation of \cite{Yoshino:book} (except with $x$ and $y$ swapped)
\[
  \Omega_{R^\#}(R/(x)) \cong \cok((x^4,x) \hat{\otimes} (y,y^2)) = \cok
  \left( \begin{bmatrix} x^4 & -y \\ y^2 & x \end{bmatrix}, \begin{bmatrix} x & y \\
      -y^2 & x^4 \end{bmatrix} \right) \cong \cok(\psi_1, \phi_1) \cong N_1\,;
\]
 and
\[
\Omega_{R^\#}(R/(x^2)) \cong \cok((x^3,x^2) \hat{\otimes} (y,y^2)) = \cok
  \left( \begin{bmatrix} x^3 & -y \\ y^2 & x^2 \end{bmatrix}, \begin{bmatrix} x^2 & y \\
      -y^2 & x^3 \end{bmatrix} \right) \cong \cok(\psi_2, \phi_2) \cong N_2 ;
 \]
 from which it follows that $\Omega_{R^\#} (R/(x^3)) \cong \Omega_{R^\#}(N_2) \cong M_2$ and $\Omega_{R^\#} (R/(x^4)) \cong \Omega_{R^\#}(N_1) \cong M_1$.  Thus $\Sigma_1$
contains $R^\#$ along with the indecomposables
$M_1 \cong (t^3, t^{10}), N_1 \cong (t^3, t^5), M_2 \cong (t^6,t^{10})$ and
$N_2 \cong (t^5, t^6)$.  As before, the irreducible morphisms between these
indecomposable $R^{\#}$-modules can all be realized as multiplication by powers of
$t$, and we obtain the following quiver for $\Lambda$.
\[
  \xymatrixrowsep{3.0pc} \xymatrix{ M_2 \ar[d]<0.5ex>^{1} \ar@{=>}[rr]<0.5ex>^{1,
      t^{-1}} & & N_2 \ar@{=>}[ll]<0.5ex>^{t^4, t^5} \ar[d]<0.5ex>^1 \\ M_1
    \ar[u]<0.5ex>^{t^3} \ar[rr]<0.5ex>^1 & & N_1 \ar[u]<0.5ex>^{t^3}
    \ar[ll]<0.5ex>^{t^5} \ar[dl]^{1} \\ & R^\# \ar[ul]^{t^3}}
\] 
Thus, $\Lambda$ is isomorphic to a quotient of the completed path algebra of
the above quiver, with relations defined by the labels as in the previous example.

We also list the $\Sigma_1$-approximation sequences for the remaining indecomposable MCM $R^\#$-modules.  As in the previous example, for each $M \in \MCM(R^\#)$ the corresponding sequence is computed by first calculating $M/yM \in \MCM(R)$, which is easily done by looking at the matrix factorization associated to $M$.  We obtain the following sequences (and their syzygies):
\begin{align*}
& 0 \to B_1 \to M_1^2 \oplus N_2 \to A_1 \to 0 &\\
& 0 \to B_2 \to M_1 \oplus M_2^2 \to A_2 \to 0 & \\
& 0 \to D_i \to M_1 \oplus M_2 \oplus N_1 \oplus N_2 \to C_i \to 0 \ \ (i = 1,2) &\\
& 0 \to Y_1 \to M_1 \oplus M_2^2 \oplus N_1 \oplus N_2^2 \to X_1 \to 0 & \\
& 0 \to Y_2 \to M_1^2 \oplus M_2 \oplus N_2^2 \to X_2 \to 0 &
\end{align*}

Finally, we mention that the short exact sequences realizing each indecomposable
$M \in \MCM(R^\#)$ as a direct summand of an extension of modules in $\Sigma_1$ are
not as apparent here as they were in our previous example.  Here, only the modules
$A_1, B_1, C_2$ and $D_2$ arise in the middle terms of almost split sequences ending
with objects in $\Sigma_1$.  We sketch the construction of this sequence in one other
example.  Consider the module
$$A_2 \cong \cok \begin{bmatrix} x & -y & 0 \\ 0 & x^2 & -y \\ y & 0 &
  x^2 \end{bmatrix}.$$ As in the proof of Proposition~\ref{prop:SigmastarSigma}, we
obtain the desired short exact sequence by applying $\Omega_{R^\#}$ to the sequence
$$0 \to yA_2/y^2A_2 \to A_2/y^2A_2 \to A_2/yA_2 \to 0.$$
An easy computation shows that both $yA_2/y^2A_2$ and $A_2/yA_2$ are isomorphic to $R/(x) \oplus (R/(x^2))^2$ as $R$-modules.  Hence, using Proposition~\ref{prop:newHP} we obtain the short exact sequence 
$$0 \to N_1 \oplus N_2^2 \to A_2 \oplus B_2 \oplus F \to N_1 \oplus N_2^2 \to 0$$
for some free module $F$.  Since $A_2, N_1$ and $N_2$ have rank $1$, while $B_2$ has rank $2$, we have $F \cong (R^\#)^3$.
\end{example}

\section*{Acknowledgments}
This work was begun during the authors' participation in the research program
``IRTATCA: Interactions between Representation Theory, Algebraic Topology and
Commutative Algebra'', at the Centre de Recerca Matem\`atica in Barcelona in
2015. The authors thank the CRM for a pleasant and productive experience.  GJL was
supported by NSF grant DMS-1502107, and ASD was supported by NSF conference grant DMS-58502086.

The authors are grateful to Tokuji Araya for pointing out a gap in an earlier version
of this paper.

\bibliographystyle{amsalpha}
%\bibliography{Refs-2017}

\def\cprime{$'$} \def\polhk#1{\setbox0=\hbox{#1}{\ooalign{\hidewidth
  \lower1.5ex\hbox{`}\hidewidth\crcr\unhbox0}}}
  \def\polhk#1{\setbox0=\hbox{#1}{\ooalign{\hidewidth
  \lower1.5ex\hbox{`}\hidewidth\crcr\unhbox0}}}
\providecommand{\bysame}{\leavevmode\hbox to3em{\hrulefill}\thinspace}
\providecommand{\MR}{\relax\ifhmode\unskip\space\fi MR }
% \MRhref is called by the amsart/book/proc definition of \MR.
\providecommand{\MRhref}[2]{%
  \href{http://www.ams.org/mathscinet-getitem?mr=#1}{#2}
}
\providecommand{\href}[2]{#2}

\end{document}